\newif\ifdebug
\newif \iffig
\DeclareMathOperator{\Hess}{{Hess}}
\begin{document}
%
\title[Tropical Dynamics of Area-Preserving Maps]{Tropical Dynamics of Area-Preserving Maps}
%
\thanks{Revised \textsc{\today} }

\date{March 2018\\
\textit{Dedicated to the memory of Bill Veech}}


\author{
	Simion Filip
}
\address{
	\parbox{0.6\textwidth}{
		School of Mathematics\\
		Institute for Advanced Study\\
		1 Einstein Drive,
		Princeton, NJ 08540}
}
\email{{sfilip@math.ias.edu}}
%
%
\begin{abstract}
We consider a class of area-preserving, piecewise affine maps on the $2$-sphere.
These maps encode degenerating families of K3 surface automorphisms and are profitably studied using techniques from tropical and Berkovich geometries.
\end{abstract}

%
\maketitle
%
\noindent\hrulefill
\tableofcontents
\noindent \hrulefill
\ifdebug
   \listoffixmes
\fi

\section{Introduction}
	\label{sec:introduction}

To study a family of objects that depend on a small parameter, it is convenient to first understand the behavior of the smallest-order terms.
Tropical geometry is a technique for doing this systematically for families of algebraic varieties and our goal in this paper is to apply some of these techniques in the context of dynamical systems.

A more sophisticated way to approach this question is via Berkovich geometry, as it encodes more of the information available in an algebraic family.
The application of Berkovich geometry in dynamics is a well-established area, with many contributions, a partial list of authors including Baker, DeMarco, Favre, Jonsson, Rivera--Letelier, Rumely \cite{BakerDeMarco,Baker_IntroductionBerkovich,Favre_RiveraLetelier,FavreJonsson,BakerRumely_PotentialTheory} and many more.
Perhaps one of the first direct applications to dynamics was by Einsiedler, Kapranov, and Lind \cite{EinsiedlerKapranovLind} though it is of a different flavor than the topic of this paper.

We are concerned with area-preserving maps on compact surfaces.
In the algebraic setting, this immediately restricts to abelian or K3 surfaces, and we further restrict here to K3 surfaces as dynamics on abelian surfaces is better-understood in the context of linear actions on tori.
In the complex-analytic setting this study was initiated by Cantat \cite{Cantat_DynamiqueK3}, who constructed a measure of maximal entropy and certain stable and unstable currents, corresponding to the equidistribution of stable and unstable leaves of the dynamics.
Examples with non-trivial Fatou components, specifically Siegel disks, were first constructed by McMullen \cite{McMullen_K3}.

We consider the same situation in the tropical setting.
That such a situation should be considered was suggested first by Kontsevich--Soibelman \cite[\S6.8]{KontsevichSoibelman_AffineStructures}.
Concretely, a tropical K3 surface is a sphere, realized as the boundary of a convex polyhedron cut out by rather special linear planes.
The tropical automorphisms preserve not only the area but also a piecewise integral-affine structure.
See \autoref{fig:pictures_computer_simulations_random_orbits1} for some illustrations of the induced dynamics.
One sees familiar pictures of elliptic islands and stochastic seas.
Note that because the derivatives of the transformations are in $\GL_2(\bZ)$, the structure near an elliptic fixed point is considerably simpler: the map is always finite order\footnote{In fact, the possible orders are $1,2,3,4$ or $6$.} in a whole neighborhood with polygonal boundary.

\begin{figure}[h]
	\centering
	\includegraphics[width=0.4\linewidth]{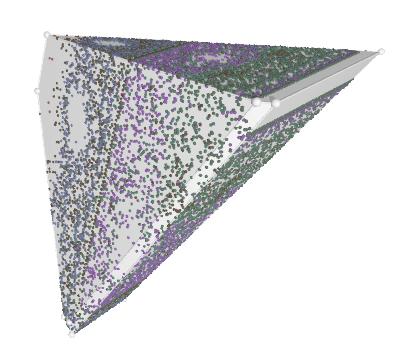}
	\includegraphics[width=0.2\linewidth]{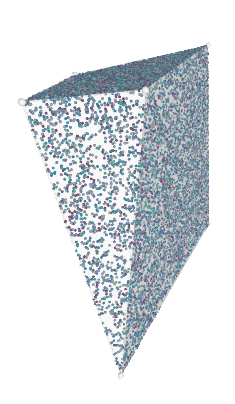}
	\includegraphics[width=0.38\linewidth]{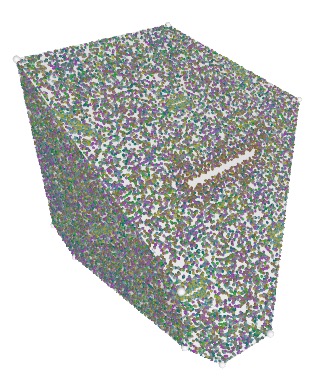}
	\caption{Orbits of a hyperbolic map on three randomly constructed tropical K3 surfaces.
	}
	\label{fig:pictures_computer_simulations_random_orbits1}
\end{figure}

A setup as above arises when considering a $1$-parameter family of K3 surfaces, such that the central fiber is maximally degenerate in a precise sense.
The real $2$-dimensional sphere corresponding to the tropical K3 approximates much of the data on the degenerating family of complex surfaces.
For example, the Gromov--Hausdorff limit of the complex surfaces with normalized Ricci-flat metric is predicted \cite{KontsevichSoibelman_HMStorusFibrations} to be the real $2$-sphere with a (singular) metric of a special form.
Boucksom--Jonsson \cite{BoucksomJonsson_LimitsOfVolumes} proved that this holds at the level of volume forms.


\subsection{Context and results}
	\label{ssec:context_and_results}

In this text we consider the Berkovich and tropical pictures in parallel.
A direct relationship between the two is given by a theorem of Payne \cite{Payne_AnalytificationLimitTropicalizations} which says that the Berkovich space can be recovered as the projective limit of tropicalizations.
The interesting dynamical phenomena can be observed already at the level of the tropicalization.

\begin{figure}[h]
	\centering
	\includegraphics[width=0.49\linewidth]{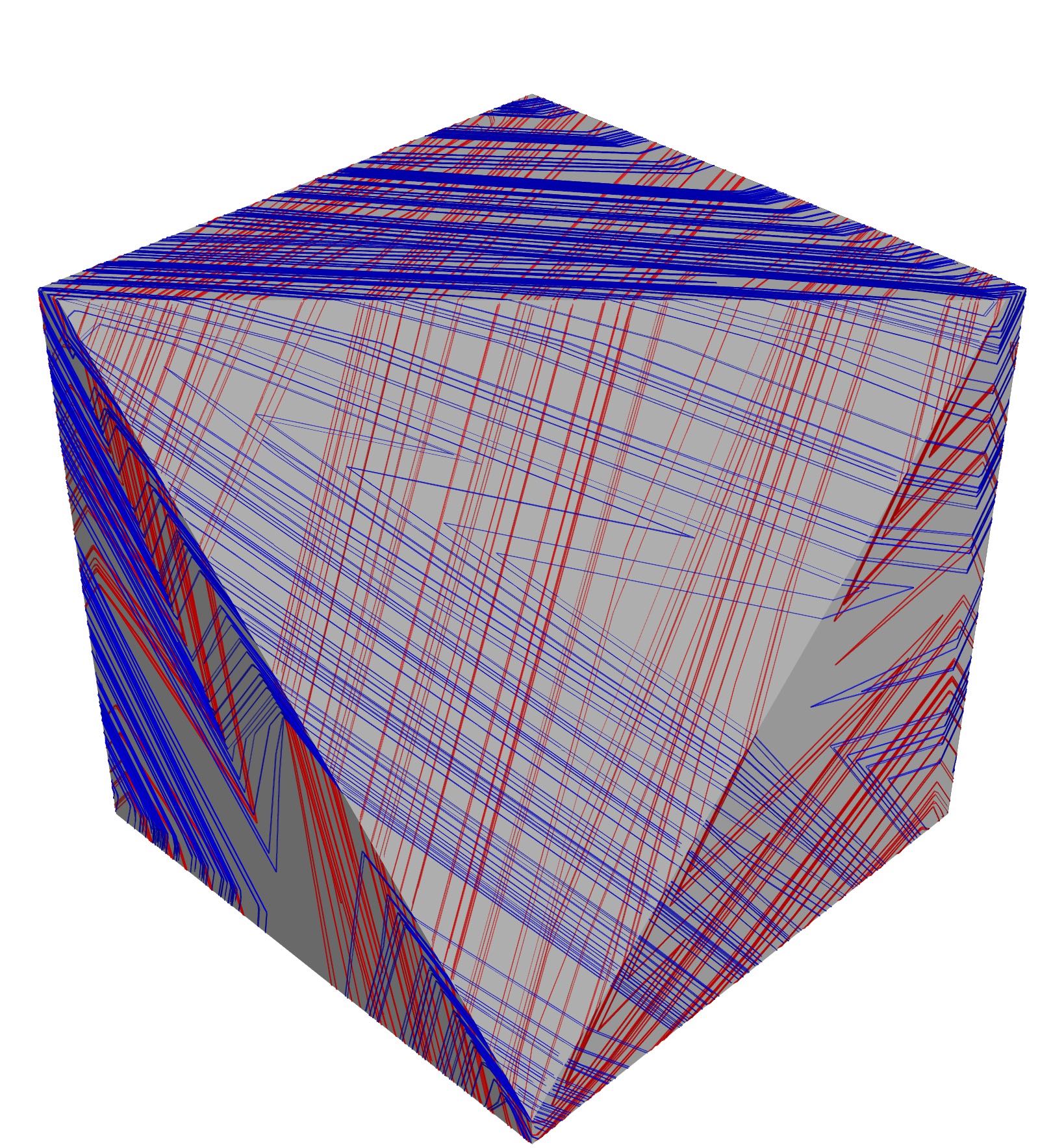}
	\includegraphics[width=0.49\linewidth]{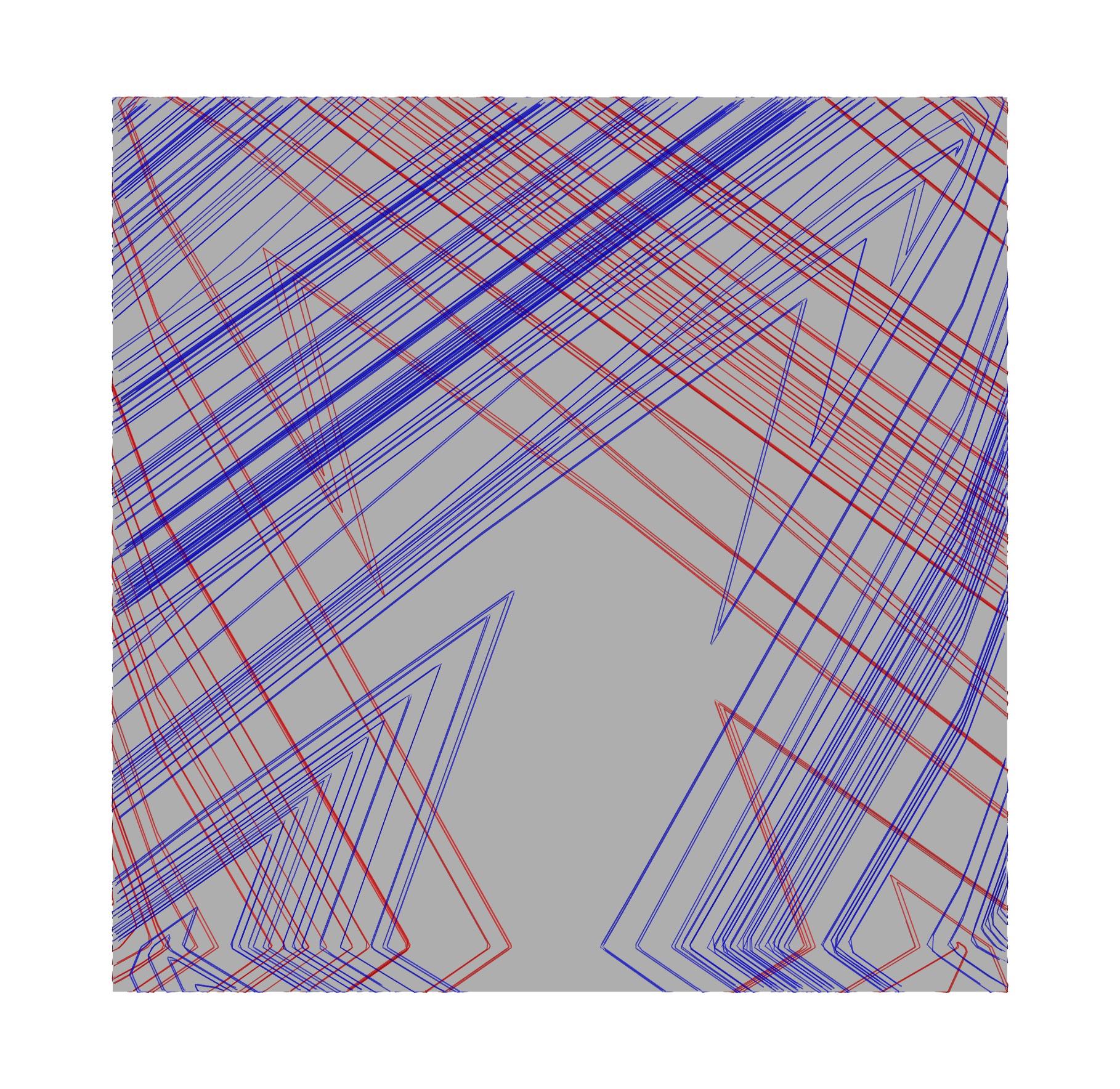}
	\caption{Pictures of currents in Rubik's cube example.}
	\label{fig:pictures_of_stable_unstable}
\end{figure}

\paragraph{Main results}
\autoref{thm:positivity_of_the_current} and \autoref{cor:invariant_measures} construct currents which are scaled by the dynamics, and an invariant measure.
Specifically, we have:
\begin{theorem*}
	Let $f \colon X\to X$ be a projective automorphism of a K3 surface over a complete non-archimedean valued field $K$ of residue characteristic zero.
	Suppose that the action of $f^*$ on $\Pic(X)$ is hyperbolic, i.e. there exists a unique up to scale $v\in \Pic(X)\otimes_\bZ\bR$ which is an eigenvector with eigenvalue $\lambda>1$.
	Let $f^{an}\colon X^{an}\to X^{an}$ denote the Berkovich analytification of the automorphism and K3 surface.

	\begin{enumerate}
		\item There exist closed positive $(1,1)$-currents $\eta_\pm$ on $X^{an}$ such that $(f^{an})^{*}\eta_\pm = \lambda^{\pm 1} \eta_\pm$.
		\item The measure $\mu^{an} = \eta_+\wedge \eta_-$ is $f^{an}$-invariant and non-zero.
		\item The currents satisfy $\eta_+\wedge \eta_+ = 0 = \eta_-\wedge \eta_-$.
	\end{enumerate}
\end{theorem*}

\paragraph{Context and motivations}
The currents on Berkovich spaces that we construct in \autoref{ssec:the_currents_in_berkovich_dynamics} can be viewed as ``non-uniformly hyperbolic measured foliations''.
Indeed, locally in $\bR^2$ a closed positive current $\eta$ is determined by a convex function $\phi$ via $\eta = d'd''\phi$ for appropriate operators $d',d''$ (see \autoref{ssec:differential_forms_on_berkovich_spaces}).
When $\phi$ is $C^2$ the condition that $\eta\wedge\eta=0$ becomes $\det(\operatorname{Hess}\phi)=0$, where $\Hess \phi = \left\lbrace \partial_{i,j}\phi \right\rbrace_{i,j}$ is the Hessian matrix of second derivatives.
Assuming for simplicity that $\rk \Hess (\phi) =1$, a result of Hartman \& Nirenberg \cite{HartmanNirenberg_JacobiansSign} (see Foote \cite{Foote_MongeAmpereFoliations} for a more extended discussion) implies that there exists locally a foliation of $\bR^2$ by lines (defined by $\ker \Hess (\phi)$), and $\phi$ restricted to each line is affine.
A convex function restricted to a line segment determines a measure (take second derivatives) and for the foliation described above, two segments which have endpoints on the same leaves, and are parallel, will have the same induced measure.
A good example to keep in mind is $\sqrt{x^2+y^2}$ on $\bR^2\setminus 0$.
For further analogies between the dynamics on K3 and Riemann surfaces, see the introduction in \cite{sfilip_CountingSlags}.

\begin{figure}[h]
	\centering
	\includegraphics[width=0.49\linewidth]{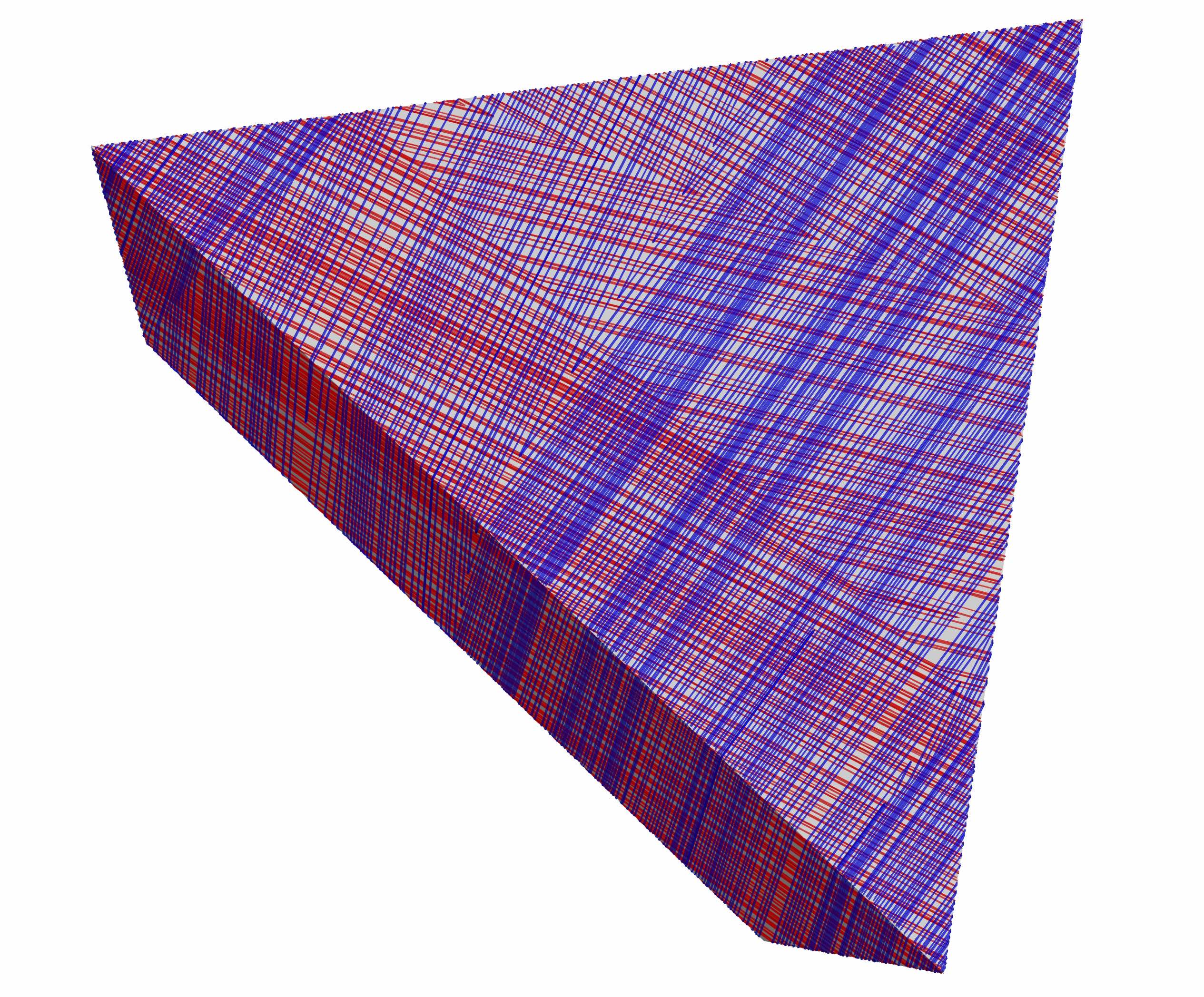}
	\includegraphics[width=0.49\linewidth]{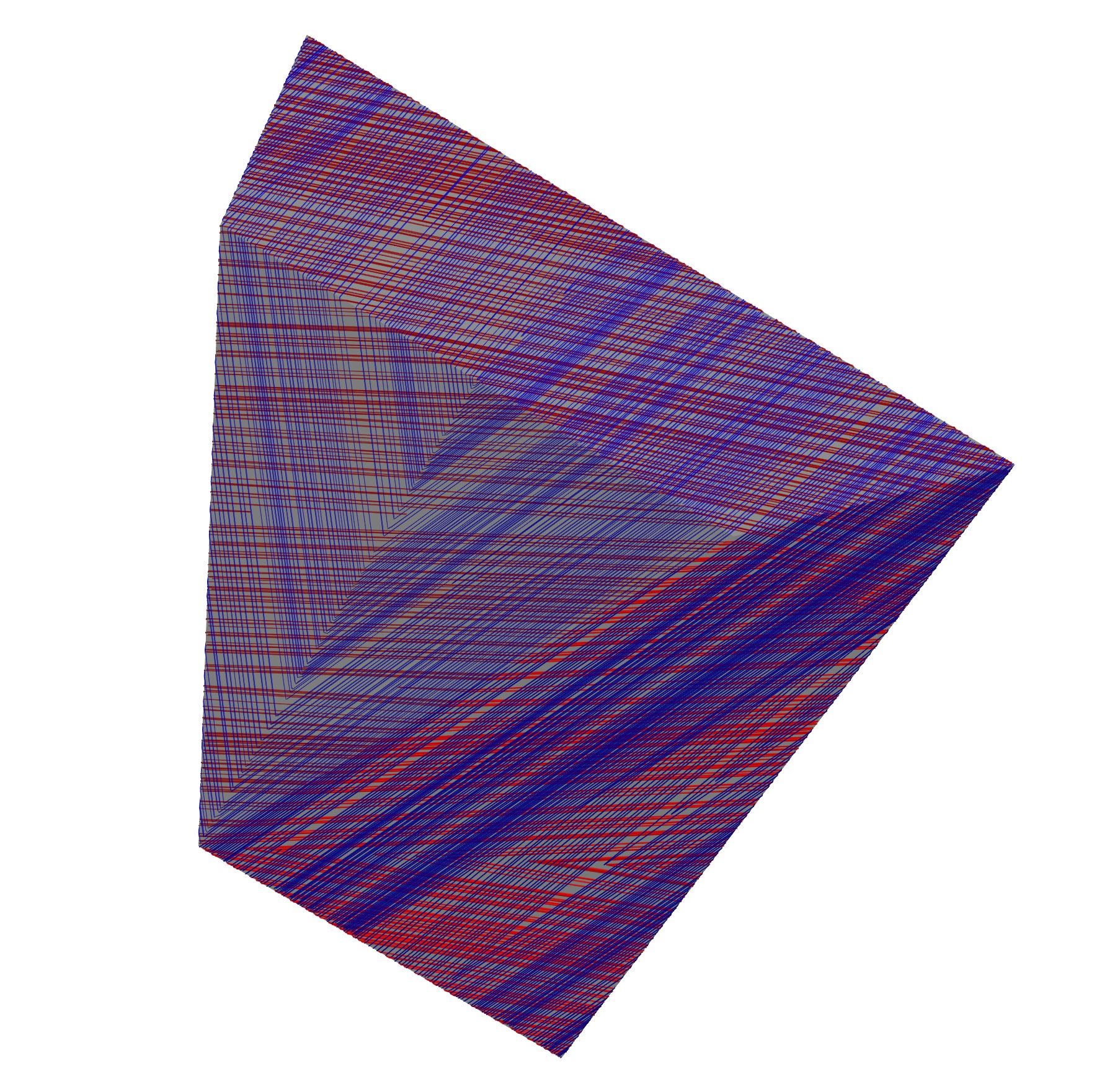}
	\caption{Stable and unstable currents of a perturbed Kummer example, viewed from different angles.
	The perturbed Kummers exhibit tangency of the stable and unstable manifolds.}
	\label{fig:kummer_chopped_currents}
\end{figure}

When $\phi$ is not $C^2$ the condition $\eta\wedge \eta=0$ has meaning using the theory developed by Lagerberg \cite{Lagerberg_Supercurrents}.
The examples in \autoref{fig:pictures_of_stable_unstable}, \autoref{fig:kummer_chopped_currents} and \autoref{fig:kummer_currents} suggest that $\phi$ is never $C^2$ except in the case of Kummer examples (see \cite{CantatDupont} and \cite{FilipTosatti_SmoothRough,FilipTosatti_KummerRigidity} for the complex version of this statement).

\begin{wrapfigure}{L}{0.5\textwidth}
	\centering
	\includegraphics[width=\linewidth]{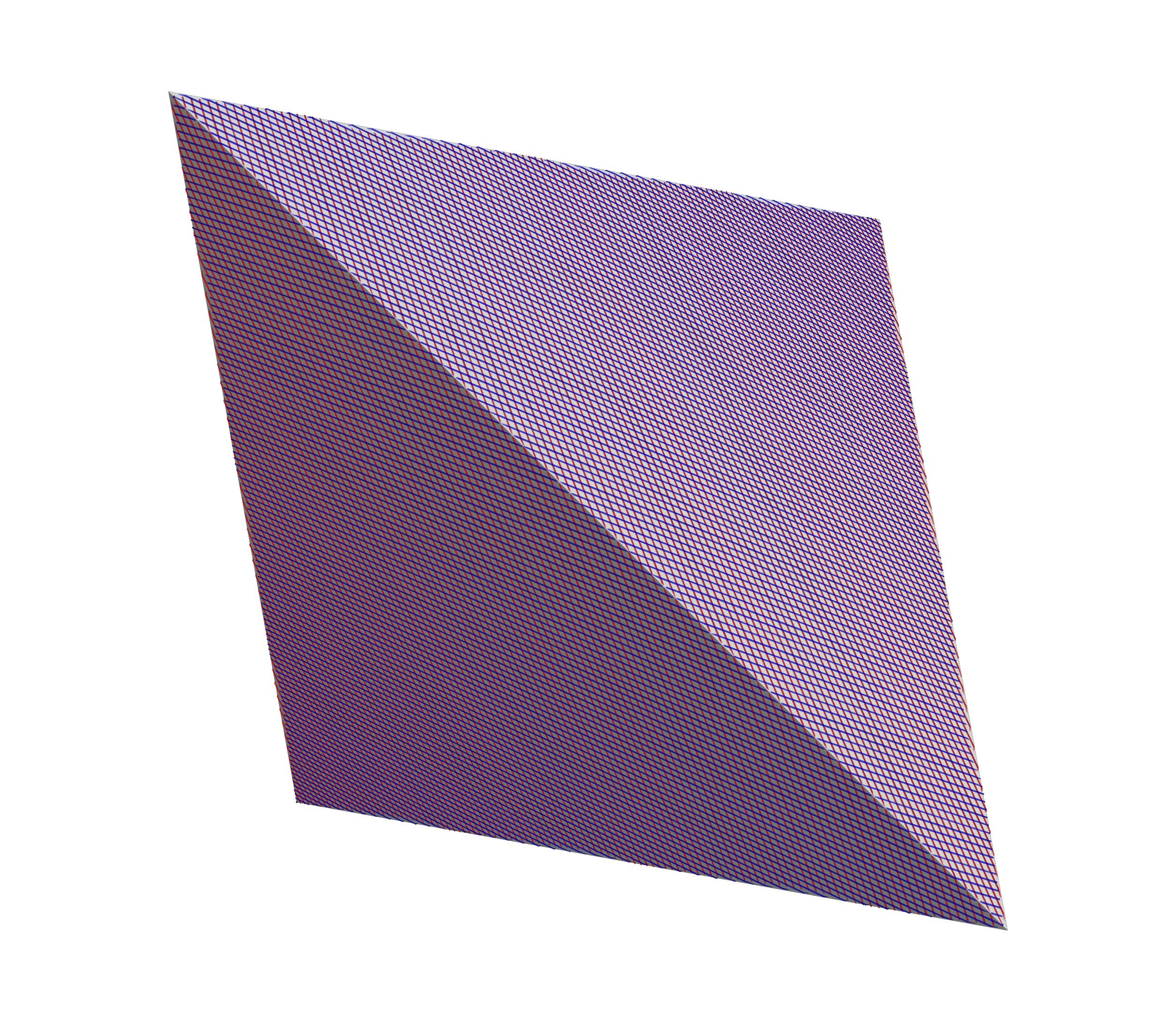}
	\caption{Stable and unstable currents in the Kummer examples have smooth potentials and are uniformly hyperbolic.}
	\label{fig:kummer_currents}
\end{wrapfigure}

\paragraph{Relation between non-archimedean and archimedean objects}
In future work we hope to explore the relation between the non-archi\-me\-dean objects introduced in this work and degenerating families of K3 surface automorphisms.
Specifically, suppose that $\cX$ is a K3 surface with an automorphism, each defined over $K$, where $K$ is the field of holomorphic functions on the punctured unit disc, with poles of bounded order at the origin allowed.
Then for every $t\neq 0$ and $|t|<1$ we have a complex K3 with associated automorphism.
The measure of maximal entropy $\mu_t$ has a Lyapunov exponent $\lambda_t$.
There are also stable/unstable currents $\eta_{\pm,t}$.

The results of Favre \cite{Favre_Degeneration} and Boucksom--Jonsson \cite{BoucksomJonsson_LimitsOfVolumes} suggest that on the associated hybrid space $\cX^{hyp}$, which contains both complex and Berkovich points, the measures $\mu_t$ converge (weakly) to the measure $\mu^{an}$, constructed in \autoref{cor:invariant_measures}.
Furthermore, the Lyapunov exponent of $\mu_t$ should blow up for $t\to 0$ as $\log \frac{1}{|t|}$ times the Lyapunov exponent of $\mu^{an}$.
In the case of endomorphisms of $\bP^k$, this last fact is the main result of \cite{Favre_Degeneration}.
We hope to investigate these questions on K3 surfaces in future work.



\subsection{Paper Outline}
	\label{ssec:paper_outline}

In \autoref{sec:general_constructions} we introduce the basic definitions and constructions, in the tropical and algebraic setting.

In \autoref{sec:the_elliptic_curve_case} we study the simplest case, namely dynamics on tropical elliptic curves.
Since these are just circles equipped with a volume form, it is not surprising that one recovers dynamical systems conjugate to circle rotations.

\autoref{sec:general_properties_of_tropical_k3_automorphisms} considers some general constructions in the setting of tropical K3 surfaces.
The main result is the construction of an unstable potential whose Laplacian is expected to give the unstable current.
It is possible that there is an elementary way to establish the positivity of the unstable current, the way this is done in \autoref{sec:expanding_pl_maps_on_the_line} in a $1$-dimensional context.
The approach taken below is via Berkovich spaces, which is no longer elementary and requires a much bigger machinery.

\autoref{sec:berkovich_spaces_and_k3_dynamics} provides a brief summary of some aspects of Berkovich spaces, which were introduced by Berkovich in \cite{Berkovich_SpectralTheory}.
A key tool in our discussion is the formalism of differential forms on Berkovich spaces, developed by Chambert--Loir \& Ducros \cite{ChamberLoirDucros} based on earlier work of Lagerberg \cite{Lagerberg_Supercurrents}.
In \autoref{ssec:the_currents_in_berkovich_dynamics} we construct the currents in the Berkovich setting, following a strategy similar to the tropical case.
In both situations, the ideas go back to Cantat \cite{Cantat_DynamiqueK3}.

\autoref{sec:examples_of_pl_maps_on_tropical_k3_surfaces} includes a number of examples and illustrations.
It contains, in particular, uniformly hyperbolic examples that come from the Kummer construction and are thus semiconjugate to Anosov automorphisms of the torus.
It also describes a simple $1$-parameter family to which it would be interesting to apply the methods in earlier sections.

\autoref{sec:expanding_pl_maps_on_the_line} concludes the paper with an application of these ideas to the tropicalization of rational maps of $1$ variable.
These have been studied extensively in the Berkovich setting (see e.g. \cite{Favre_RiveraLetelier}) but the treatment here is elementary and in a slightly different setting.
A natural ``positive'' cone of functions is invariant under the dynamics in this case and it allows us to construct directly a convex potential.
It would be interesting to see if some of the constructions in \autoref{sec:expanding_pl_maps_on_the_line} extend to K3 surfaces, in order to obtain positivity of the stable and unstable currents.



\subsection{Some further questions and remarks}
	\label{ssec:some_further_questions}

A number of questions for further investigation are scattered throughout the text.
Here we collect some of them.
\begin{enumerate}
	\item Simple elliptic integrable dynamics arises from the above tropical constructions in dimension $1$.
	A question regarding the variation of twist angle is formulated in \autoref{rmk:variation_of_twist_angle}.
	Next, one sees a coupling of the two integrable regimes at the ``infinity'' of the ambient tropical space.
	This is outlined in \autoref{sssec:dynamics_at_infinity} and it would be interesting to investigate it further.
	\item For the PL dynamical systems on the line, some further questions are formulated in \autoref{sssec:some_further_questions}.
	\item Following DeMarco \cite{DeMarco_Bifurcation}, one can also study the variation of the currents associated to the dynamical systems.
	This is done by constructing the potential on the entire parameter space and checking its subharmonicity.
	In the present case, a similar approach should establish its convexity.
\end{enumerate}

\paragraph{Lozi mappings}
Introduced in \cite{Lozi_AttracteurEtrange}, these are PL analogues of H\'enon mappings of the form
\[
	H_{a,b}(x,y) = (y+1-a|x|,bx) \quad H_{a,b}\colon \bR^2\to \bR^2
\]
In analogy with the work of Bedford--Lyubich--Smillie \cite{BedfordLyubichSmillie}, it seems natural to construct the stable and unstable currents for Lozi mappings using the techniques from \autoref{sec:general_properties_of_tropical_k3_automorphisms}.
Note that the same observation as in \autoref{sec:expanding_pl_maps_on_the_line} (\autoref{prop:preserving_cones_1d}) applied to $\bR^4$ instead of $\bR^2$ implies that the constructed currents are positive.

\paragraph{Compatibility between tropical and Berkovich pictures}
In future work, we will discuss the relation between the constructions in \autoref{sec:general_properties_of_tropical_k3_automorphisms} and \autoref{sec:berkovich_spaces_and_k3_dynamics}.
The diagram summarizing the relevant maps is the following:
	\[
	\begin{tikzcd}[row sep=scriptsize, column sep=scriptsize]
	 & E^{trop} \arrow[hookleftarrow]{rr} \arrow{dd} & & E^{trop}\vert_{\Sk(X^{trop})}  \arrow{dd} \\
	E^{an} \arrow[hookleftarrow, crossing over]{rr}  \arrow{dd} \arrow{ur}{\Trop} & & E^{an}\vert_{\Sk(X^{an})} \arrow{ur}{\Trop}  \\
	& X^{trop} \arrow[hookleftarrow]{rr}& & \Sk(X^{trop}) \arrow[leftrightarrow]{dl} \\
	X^{an} \arrow[hookleftarrow]{rr} \arrow{ur}{\Trop} & & \Sk(X^{an}) \arrow[crossing over, leftarrow]{uu} \\
	\end{tikzcd}
	\]
Given a projective K3 surface $X$ equipped with an automorphism $f\colon X\to X$, we can construct a vector bundle $E\to X$ and a lift $F\colon E\to E$ of $f$ to $E$.
The bundle $E$ can be used to establish the positivity properties of the appropriate currents.

The entire picture is compatible with passing to Berkovich analytification, giving spaces $E^{an}\to X^{an}$ and maps $f^{an},F^{an}$.
There is a canonical Berkovich skeleton $\Sk(X^{an})\subset X^{an}$ which in the case of a maximally degenerating K3 is homeomorphic to a $2$-sphere.
The map $f^{an}$ preserves the Berkovich skeleton.

A choice of embedding of $X$ into a toric variety gives tropicalization maps $\Trop$.
There is an associated tropical skeleton $\Sk(X^{trop})$ which under appropriate conditions is identified with $\Sk(X^{an})$ (see \cite{BPR_NonArchimedeanGeometryTropicalization} for a related discussion for curves).
While the map $f^{an}$ does not descend to a map of $\Trop(X^{an})=:X^{trop}$, it is possible to describe tropically the map on $\Sk(X^{trop})$.

\paragraph{Some conventions}
Throughout, we follow the $\min$ convention of tropical geometry as we are viewing the objects as depending on a \emph{small} parameter, so $\ve^{\alpha}+ \ve^{\beta}\approx \ve^{\min(\alpha,\beta)}$.
This has the notational disadvantage that it leads to concave functions instead of convex ones, but this is only a matter of convention.
For signs of the potential defining a metric on a line bundle, see \autoref{rmk:on_signs}.

\paragraph{Acknowledgments}
I am grateful to the referee for a thorough reading of the text and numerous remarks and suggestions that improved the presentation.
I am grateful to Laura DeMarco, Phil Engel, Mattias Jonsson, Sam Payne for discussions on the topic of this paper.
I am grateful to Curt McMullen for discussions related to computer simulations.
Figures were created using Sage (\cite{sagemath}) and Mayavi (\cite{ramachandran2011mayavi}).
This research was partially conducted during the period the author served as a Clay Research Fellow.



\section{General Constructions}
	\label{sec:general_constructions}

After introducing some basic terminology from tropical geometry we describe a basic class of Calabi--Yau manifolds to which our discussion will apply.
These are given as degree $(2,2,\cdots,2,2)$ hypersurfaces in $(\bP^1)^n$ and are classical examples with a large automorphism group.
The focus will be on $n=2, 3$, which leads to elliptic curves and K3 surfaces respectively.
We describe their basic algebraic and tropical properties.
An analysis of the dynamics for elliptic curves is in \autoref{sec:the_elliptic_curve_case}, and for K3 surfaces in \autoref{sec:general_properties_of_tropical_k3_automorphisms}.

An introduction to tropical geometry and its techniques is in the monograph \cite{MaclaganSturmefls} as well as the collection of notes \cite{ItenbergMikhalkinShustin_OberwolfachSeminars}.

\subsection{Fields and Tropicalization}
	\label{ssec:fields_and_tropicalization}

\subsubsection{Fields with a valuation}
	\label{sssec:fields_with_a_valuation}
Recall that a \emph{valuation} on a field $K$ is a map
\[
	v\colon  K^\times \to \Gamma
\]
from the non-zero elements of $K$ to an ordered abelian group $\Gamma$ (written additively), satisfying for all $a,b\in K^\times$
\begin{align*}
	v(a\cdot b) & = v(a) + v(b)\\
	v(a+b) & \geq \min(v(a),v(b))
\end{align*}
One can define also $v(0)=\infty$ with $\infty>\gamma, \forall \gamma \in \Gamma$.

The basic example used later on will be of fields containing power series in one complex or real variable.
For example, if $K=k(t)$ is the field of rational functions, then the order of vanishing at the origin
\[
	v\colon k(t)\to \bR \quad v(f):= \ord_{t=0}(f)
\]
provides a valuation.

In all situations below the value group $\Gamma$ is contained in $\bR$, so a valuation also gives a non-Archimedean norm on $K$ via:
\[
	\norm{f}_v := e^{-v(f)}
\]
satisfying $\norm{fg}_v = \norm{f}_v \norm{g}_v$ and $\norm{f+g}_v \leq \max(\norm{f}_v,\norm{g}_v)$.

\subsubsection{Tropicalization map}
	\label{sssec:tropicalization_map}

Given a field $K$ with a valuation $v\colon K \to \bR$, define the map
\begin{align*}
	\Trop\colon  (K^\times)^n &\longrightarrow \bR^n\\
	(f_1,\ldots,f_n) &\mapsto (v(f_1), \ldots, v(f_n))
\end{align*}

\subsubsection{PL-functions}
	\label{sssec:pl_functions}
A monomial $m:=ax_1^{i_1}\cdots x_n^{i_n}\in K[x_1^{\pm 1},\ldots, x_n^{\pm 1}]$ gives an affine linear function
\begin{align*}
	\Trop(m)\colon \bR^n &\longrightarrow \bR\\
	(e_1,\ldots, e_n)& \mapsto v(a) + i_1\cdot e_1  + \cdots + i_n\cdot  e_n
\end{align*}
A Laurent polynomial in several variables $f = \sum c_\alpha x^\alpha$ (with $\alpha$ a multi-index) gives a PL-function
\begin{align*}
	\Trop(f) \colon & \bR^n \longrightarrow \bR\\
	(e_1,\ldots,e_n) & \mapsto \min_{m_\alpha}(\Trop(m_\alpha))
\end{align*}
where $m_\alpha = c_\alpha x^\alpha$ are the individual monomials.

\subsubsection{Tropical Varieties}
	\label{sssec:tropical_varieties}
A PL-function is affine linear except along a collection of positive-codimension polyhedra in $\bR^n$ (possibly infinite in some directions).
The tropical variety $V(\Trop(f))$ associated to a PL-function $\Trop(f)$ is defined to be the break locus of $\Trop(f)$, i.e. the polyhedra along which two or more of the defining affine linear functions of $\Trop(f)$ agree, and where they take minimal values among the other defining functions.

Each component in the complement of a tropical variety is naturally labeled by a monomial $m_\alpha$, corresponding to the affine linear function $\Trop(m_\alpha)$ which agrees with $\Trop(f)$ on that component.
If there is a point $x\in (K^\times)^n$ such that $f(x)=0$ then necessarily $\Trop(x)\in V(\Trop(f))$, since there must be at least two monomials of lowest valuation in order to have $f(x)=0$.
Furthermore, an approximate converse to this holds when $K$ is algebraically closed (see \cite[Thm.~3.2.3]{MaclaganSturmefls}) and is called the ``Fundamental Theorem of Tropical Geometry''.

\subsubsection{Dual Subdivision}
	\label{sssec:dual_subdivision}
The convex hull of the multi-indices $\alpha\in \bZ^n$ which appear in the definition of $f$ is called the \emph{Newton polytope} of $f$.
The valuations of the coefficients of $m_\alpha$ determine a subdivision of the Newton polytope, such that the tropical variety associated to $\Trop(f)$ is dual to this subdivision (see \cite[Prop.~3.1.10]{MaclaganSturmefls}).
Vertices of $V(\Trop(f))$ correspond to cells in the subdivision, and vice-versa a vertex in the Newton polytope corresponds to a component of $\bR^n\setminus V(\Trop(f))$ on which the corresponding affine linear function is minimized.
See \autoref{fig:pictures_elliptic_curve_monomials} for an illustration.

\subsubsection{Balancing and Smoothness}
	\label{sssec:balancing_and_smoothness}
(see \cite[\S3.3]{MaclaganSturmefls})
First, consider the case $n=2$, when $V(\Trop(f))$ is a $1$-dimensional polyhedral complex in $\bR^2$.
Consider the star of a vertex $x\in V(\Trop(f))$, i.e. the rays coming out of $x$.
Each component of the complement (in a small neighborhood of $x$) is labeled by an affine linear function with integral slope; the functions all agree at $x$ and pairwise agree along the rays separating their components.
On each of these rays there is a choice of integral vector $v_i$, and if $v_i$ is not primitive, its \emph{multiplicity} is the positive integer $k_i$ such that $\frac{1}{k_i}v_i$ is primitive.
The vectors $v_i$ satisfy the balancing condition:
\[
	\sum_i v_i = 0.
\]
For a higher-dimensional tropical hypersurface $V(\Trop(f))$, the balancing condition is expressed in terms of codimension $1$ faces, by taking the quotient by a linear subspace and reducing to dimension $2$.
The multiplicities are now assigned to top-dimensional faces of $V(\Trop(f))$.

A tropical variety is \emph{smooth} if all its top-dimensional faces have multiplicity $1$.

\subsubsection{Integral-affine structure on the tropical manifold}
	\label{sssec:integral_structure_on_the_tropical_manifold}
Each top-dimensional face of a tropical manifold $V(\Trop(f))$ is naturally a subset of an affine space such that the associated vector space contains a $\bZ$-lattice.
Indeed, the affine space is the level set of an integral linear function.

Consider now a codimension $1$ subset where three top-dimensional faces $\delta_1,\delta_2,\delta_3$ intersect along a common face $\tau$; each $\delta_i$ has dimension $n-1$ and $\tau$ has dimension $n-2$.
Let $v_i, i=1,2,3$ be integer vectors starting at the intersection locus and pointing along each of the faces, such that when taking the quotient by the linear space determined by $\tau$ the $v_i$ satisfy the balancing condition $v_1 + v_2 + v_3 = 0$.
Then taking the quotient $\bR^n/ \bR\cdot v_3$ gives an identification of the linear spaces determined by $\delta_1$ and $\delta_2$ and moreover it is compatible with the $\bZ$-lattices contained in the corresponding linear spaces.

Note that the identification of $\bZ$-lattices obtained this way depends on the choice of $v_i$, and the $v_i$ are well-defined up to the addition of integer elements in the space determined by $\tau$.
Applying the identifications along a loop in the tropical manifold (which avoids a codimension $2$ set) there can (and typically will) be monodromy.

The case of interest below is when the tropical manifold will have a skeleton homeomorphic to the $n$-sphere and $n=1,2$.
For $n=1$ the above construction will endow the skeleton (a circle) with charts to $\bR$ with gluing along translations.
For $n=2$, it will give an integral-affine structure on the $2$-sphere, with singular points along certain edges; the singular points are free to move along those edges (depending on the choice of charts and vectors $v_i$).

\subsection{The basic example of an automorphism}
	\label{ssec:the_basic_example}

Denote by $\bG_m$ the invertible elements of $K$, i.e. $K^\times$.
When the background space is $\bG_m^n$, also called an $n$-dimensional torus, varieties will be considered of the form
\begin{align}
	\label{eqn:algebraic_equation_on_torus}
	0 = h(x_1,\cdots, x_n) = \sum_{i_\bullet = -1,0,1} a_{i_1,\ldots, i_n} x_1^{i_1} \cdots x_n^{i_n}
\end{align}
When compactifying, varieties will be considered inside $(\bP^1)^n \supset \bG_m^n$, and the homogenized equation will be
\begin{multline}
	\label{eqn:homogenized_algebraic_equation}
	0 = H(X_1:Y_1, \ldots, X_n:Y_n) = \\
	= \sum_{i_\bullet = -1,0,1} a_{i_1,\ldots, i_n} (X_1^{1+i_1}Y_1^{1-i_1}) \cdots (X_n^{1+i_n}Y_n^{1-i_n})	
\end{multline}
so that $x_j=\frac{X_j}{Y_j}$ and \autoref{eqn:homogenized_algebraic_equation} is obtained from \autoref{eqn:algebraic_equation_on_torus} by multiplying by $X_jY_j$ for $j=1\ldots n$.
For considerations involving say the first pair of variables $(X_1:Y_1)$ it is convenient to write the equation as
\[
	0 = Y_1^2\cdot H_{-1} + X_1 Y_1\cdot  H_0 + X_1^2 \cdot H_{1}
\]
where $H_{\bullet}$ are homogeneous degree $2$ functions of the remaining variables.

\subsubsection{Holomorphic volume form}
	\label{sssec:holomorphic_volume_form}
\autoref{eqn:homogenized_algebraic_equation} determines a Calabi--Yau manifold in $(\bP^1)^n$, i.e. there exists a holomorphic nowhere vanishing $n$-form $\Omega$ on the zero locus of $H$.
An explicit way to write $\Omega$ is by the residue construction; in coordinates on $\bG^n_m$ it is given by
\[
	\Omega = \Res_{h=0} \left( \frac{1}{h}\frac{dx_1}{x_1}\wedge\cdots \wedge \frac{dx_n}{x_n} \right)
\]
where $h$ is defined in \autoref{eqn:algebraic_equation_on_torus}.

\subsubsection{Vieta involutions}
	\label{sssec:vieta_involutions}
Holding the variables $x_2,\ldots,x_n$ fixed, rewrite the equation as
\[
	0 = \frac 1 {x_1} \cdot  h_{-1}(x_2,\ldots,x_n) + h_{0}(x_2,\ldots,x_n) + x_1\cdot h_{1}(x_2,\ldots,x_n)
\]
Viewing this as a quadratic equation in $x_1$ (by multiplying out by $x_1$), the involution $\sigma_1$ exchanges the two roots:
\begin{align*}
	pr_{x_i}\circ \sigma_1(x_1,\ldots,x_n) = \begin{cases}
		x_i &\text{ if }i\geq 2\\
		\frac{h_{-1}(x_2,\ldots,x_n)}{h_{1}(x_2,\ldots,x_n)} \cdot \frac{1}{x_1} & \text{ if } i =1
	\end{cases}
\end{align*}
where $pr_{x_i}$ denotes projection onto the $i$-th coordinate.
The other involutions $\sigma_i, i=2\ldots n$ are defined analogously.
Note that one can define equivalently $pr_{x_1}\circ\sigma_1(x_1,\ldots,x_n)=-\frac{h_{0}(x_2,\ldots,x_n)}{h_{1}(x_2,\ldots,x_n)} - x_1 $, but in the multiplicative form specified above it will be easier to compute valuations.

\subsubsection{Indeterminacies}
	\label{sssec:indeterminacies}
The definition of $\sigma_1$ above is ambiguous since it is given as a rational function.
However, using the principle that $\sigma_1$ exchanges the two roots of the quadratic equation with $x_2,\ldots, x_n$ fixed, it is possible to define $\sigma_1$ in any chart.

Note that the involution will be well-defined when $h_{-1},h_0, h_1$ have no common zeros.
This is the generic situation when $n\leq 3$ so we restrict to this case.

\subsubsection{Tropicalizing the Vieta involutions}
	\label{sssec:tropicalizing_the_vieta_involutions}

Using the formula from \autoref{sssec:vieta_involutions} for $\sigma_1$, its tropicalization is naturally
\begin{align*}
	pr_{e_i}\circ\Trop(\sigma_1)(e_1,\ldots,e_n):=
	\begin{cases}
		e_i &\text{ if } i\geq 2\\
		\Trop(h_{-1}) - \Trop(h_1) - e_1 & \text{ if } i =1
	\end{cases}
\end{align*}
with $pr_{e_i}$ denoting projection to the $i$-th coordinate.
Note that each of $\Trop(h_{-1}),\Trop(h_1)$ is itself a PL-function (of fewer variables).
Similar formulas define involutions $\Trop(\sigma_i)$ for $i=2\ldots n$.

\subsubsection{Homogenizing the Vieta involutions}
	\label{sssec:homogenizing_the_vieta_involutions}

Assume now that $n=3$ and for convenience denote coordinates by $(x,y,z)$ instead of $(x_1,x_2,x_3)$ in earlier sections.
Homogenizing gives a bundle
\[
	(\bA^2\setminus 0)^3 \to (\bP^1)^3
\]
with homogeneous coordinates $(X_0:X_1,Y_0:Y_1,Z_0:Z_1)$ on the total space, and equation
\[
	X_0^2 \cdot H_{-1} + X_0 X_1 \cdot H_{0} + X_1^2 \cdot H_1=0
\]
following \autoref{eqn:homogenized_algebraic_equation}.
The involution $\sigma_x$ can then be lifted as
\begin{align}
	\label{eqn:Sigma_x_formula}
	pr_{X}\circ \Sigma_x(X_0:X_1) = \left( \frac{H_{1}}{ X_0} \colon \frac{H_{-1}}{X_1} \right)
\end{align}
where $pr_X$ denotes projection to $(X_0:X_1)$ and the other coordinates do not change; the lifts $\Sigma_y,\Sigma_z$ are defined analogously and are themselves involutions (note that not all possible lifts are involutions).
The reason for this particular choice of homogenization is the validity of \autoref{prop:lift_of_the_automorphisms} below.

\subsubsection{The bundles and the space}
	\label{sssec:the_bundles_and_the_space}
Denote by $X$ the vanishing locus in $(\bP^1)^3$ of the homogeneous equation $H=0$.
Each of the $\bP^1$-factors carries a natural $\bG_m$-bundle coming from $\bA^2\setminus 0 \to \bP^1$.
This $\bG_m$-bundle is the total space of the line bundle $\cO(-1)$, with the zero section removed.

Denote by $E\subset (\bA^2\setminus 0)^3$ the locus where $H=0$, which is the total space of the $\bG_m^3$-bundle over $X$.

\begin{proposition}[Lift of the automorphisms]
	\label{prop:lift_of_the_automorphisms}
	Assume that $H_{-1},H_0,H_1$ have no common zero except at the origin, and assume the analogous condition for the $Y_\bullet,Z_\bullet$ variables.
	
	Then the automorphisms $\Sigma_x,\Sigma_y,\Sigma_z$ defined in \autoref{sssec:homogenizing_the_vieta_involutions} provide biregular lifts of $\sigma_x,\sigma_y,\sigma_z$ from $X$ to $E$.
\end{proposition}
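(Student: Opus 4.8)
The plan is to verify everything by direct computation for $\Sigma_x$; the maps $\Sigma_y$ and $\Sigma_z$ are handled identically after permuting the roles of the three pairs of coordinates. Since $\Sigma_x$ alters only the pair $(X_0:X_1)$ and fixes $(Y_\bullet,Z_\bullet)$, it commutes with the projection $E\to X$ as soon as the induced map on the ratio $x=X_1/X_0$ is $\sigma_x$; and once $\Sigma_x$ is shown to be a genuine morphism $E\to E$ with $\Sigma_x^2=\mathrm{id}$, biregularity is automatic, a self-inverse morphism being an isomorphism.

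First I would record three formal identities on the dense open locus $U=\{X_0X_1\neq 0\}$ of $E$. The lift projects to $\sigma_x$ because
\[
	\frac{X_1'}{X_0'}=\frac{H_{-1}/X_1}{H_1/X_0}=\frac{H_{-1}}{H_1}\cdot\frac{1}{x},
\]
and $H_{-1}/H_1=h_{-1}/h_1$ since $H_{-1}$ and $H_1$ are bihomogeneous of the same bidegree in $(Y_\bullet,Z_\bullet)$; this is exactly the formula for $\sigma_x$ from \autoref{sssec:vieta_involutions}. The involution property follows by substituting twice: as $H_{\pm1}$ involve only the untouched variables, $X_0''=H_1/(H_1/X_0)=X_0$ and likewise $X_1''=X_1$. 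Finally, substituting $\Sigma_x$ into the defining polynomial and clearing denominators gives
\[
	(X_0')^2H_{-1}+X_0'X_1'H_0+(X_1')^2H_1=\frac{H_1H_{-1}}{X_0^2X_1^2}\Big(X_0^2H_{-1}+X_0X_1H_0+X_1^2H_1\Big),
\]
whose right-hand side vanishes on $E$; this is precisely why the homogenization of \autoref{eqn:Sigma_x_formula} was chosen. These identities extend from $U$ to all of $E$ by continuity, using that $E$ is reduced and separated (alternatively they may be checked directly in the charts below).

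The only real obstacle is to extend $\Sigma_x$ across its apparent indeterminacy loci $\{X_0=0\}$ and $\{X_1=0\}$, and to guarantee that the image lands in $(\bA^2\setminus 0)^3$ rather than on a discarded zero-section; this is the sole point at which the hypothesis is used. Using the defining equation I would rewrite the two entries of \autoref{eqn:Sigma_x_formula} as
\[
	X_0'=\frac{H_1}{X_0}=-\frac{X_0H_{-1}+X_1H_0}{X_1^2},\qquad X_1'=\frac{H_{-1}}{X_1}=-\frac{X_0H_0+X_1H_1}{X_0^2}.
\]
The right-hand expression for $X_0'$ is regular wherever $X_1\neq 0$ and that for $X_1'$ wherever $X_0\neq 0$, so at every point of $E$ each entry admits a regular representative; the defining equation shows these representatives agree on overlaps, and hence $\Sigma_x$ is a morphism on all of $E$. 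For the non-vanishing of $(X_0',X_1')$ I would use that on $E$ one has $X_0=0\Rightarrow H_1=0$ and $X_1=0\Rightarrow H_{-1}=0$. If $(X_0',X_1')=(0,0)$ at some point, then two of $H_{-1},H_0,H_1$ vanish there, and either the defining equation or the two implications just noted force the third to vanish as well; since the $(Y_\bullet,Z_\bullet)$-coordinates are nonzero on $E$, this contradicts the assumption that $H_{-1},H_0,H_1$ have no common zero away from the origin.

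Combining these steps, $\Sigma_x$ is a morphism $E\to E$ which projects to $\sigma_x$ and satisfies $\Sigma_x^2=\mathrm{id}$, hence is a biregular lift; the same argument applies verbatim to $\Sigma_y$ and $\Sigma_z$. I expect the genuinely delicate part to be this last paragraph --- the simultaneous resolution of indeterminacy and the non-vanishing of the image coordinates --- since it is there that the no-common-zero hypothesis is essential and that the several local formulas must be reconciled. The three formal identities, by contrast, are routine substitutions.
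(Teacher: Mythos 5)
Your proof is correct and follows essentially the same route as the paper's: both use the defining equation to rewrite $H_1/X_0$ (and symmetrically $H_{-1}/X_1$) as expressions regular across the loci $X_0=0$ and $X_1=0$, and both rule out the image hitting $(0:0)$ by showing that all three of $H_{-1},H_0,H_1$ would then vanish at a point whose $(Y_\bullet,Z_\bullet)$-coordinates are nonzero, contradicting the hypothesis. The extra verifications you include (projection to $\sigma_x$, the involution identity $\Sigma_x^2=\mathrm{id}$, and preservation of the equation $H=0$) are treated by the paper as part of the setup of the homogenized involutions, so they supplement rather than change the argument.
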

\begin{proof}
	It is enough to check the claim for $\Sigma_x$, the other involutions being analogous.
	Since the $Y_\bullet,Z_\bullet$-variables are not moved by the involution, only the behavior in the $X_\bullet$-variables needs to be considered.
	From \autoref{eqn:Sigma_x_formula}, the value of $\Sigma_x(X_0:X_1)$ is not defined when either $X_0=0$ or $X_1=0$.
	It suffices to deal with one case, the other being analogous.

	Using the equation of the K3 surface, rewrite
	\[
		\frac{H_1}{X_0} = \frac{-1}{X_1^2}\cdot \left( X_0 \cdot H_{-1} + X_1\cdot H_0 \right)
	\]
	which can be used on the locus where $X_0=0$.
	Since the point $(0:0)$ is excluded by construction, it follows that the lift $\Sigma_x$ is well-defined everywhere.

	It remains to check that there is no value of $(X_0:X_1)$ in the domain for which $\Sigma_x(X_0:X_1)=(0:0)$, or equivalently consider the case $H_1=0=H_{-1}$.
	Since by assumption in that case $H_0\neq 0$, the locus where $H_1=0=H_{-1}$ intersected with $H=0$ is where $X_0=0$ or $X_1=0$ but not both.
	On this locus, using the alternative expression for $\frac{H_1}{X_0}$ from above, it follows that no such $(X_0:X_1)$ exists.
\end{proof}

\subsubsection{A general construction}
	\label{sssec:a_general_construction}
The above discussion is an instance of a more general construction, due to Cantat \cite[\S3.1]{Cantat_DynamiqueK3}.
Specifically, if $f:X\to X$ is a projective automorphism of a variety with $H^1(X)=0$ (or: $\Pic^0(X)=0$) then selecting $L_1,\ldots, L_n$ a basis of the Picard group of line bundles, $f^*$ induces a linear map that can be expressed as an $n\times n$ integral matrix.
Take $E$ to be the total space of $L_1\oplus\cdots \oplus L_n$, or in the present case $L_1^\times\oplus \cdots \oplus L_n^\times$ where $L^\times$ denotes the bundle without the zero section.
Then there is a lift of $f:X\to X$ to the bundle as $F:E\to E$, commuting with the projection $E\to X$.


\subsection{The Picard group in the basic example}
	\label{ssec:the_picard_group_in_the_basic_example}

\subsubsection{The tautological line bundles}
	\label{sssec:the_obvious_line_bundles}
Recall that over $\bP^1$ there is the tautological line bundle $\cO(-1)\to \bP^1$.
Over the product $(\bP^1)^3$ there are therefore three natural line bundles $L_1,L_2,L_3$ associated to each factor.
Let $X:=\left\lbrace H=0 \right\rbrace$ be a degree $(2,2,2)$ surface as before, and let $C_x,C_y,C_z$ be the intersections of $X$ with the planes $\left\lbrace \alpha=const \right\rbrace$ (with $\alpha=x,y,z$) in $(\bP^1)^3$.
Let $[C_x],[C_y],[C_z]\in H_2(X;\bZ)$ denote their homology classes.

\subsubsection{Intersection theory calculation}
	\label{sssec:intersection_theory_calculation}
It is clear that $[C_x]\cap [C_x]=0$ since $[C_x]$ is a fiber in a fibration of $X$ over $\bP^1$, as the $x$-coordinate varies.
Next, $[C_x]\cap [C_y]=2$, since intersecting the surface $X$ to the plane $\left\lbrace x=const \right\rbrace$ gives a degree $(2,2)$-surface, and intersecting with the line $\left\lbrace y=const \right\rbrace$ will have two intersection points.
So for generic values of $x,y$, the curves $[C_x],[C_y]$ intersect at two points.
Using symmetry considerations, it follows that the intersection matrix of the three divisors $C_x,C_y,C_z$ is:
\[
	\begin{bmatrix}
		0 & 2 & 2\\
		2 & 0 & 2\\
		2 & 2 & 0
	\end{bmatrix}
\]

\subsubsection{The action of involutions}
	\label{sssec:the_action_of_involutions}
Suppose now that $\sigma_x$ is the involution defined above.
Then it is clear that $\sigma_x(C_y)=C_y$ and $\sigma_x(C_z)=C_z$ (note that the involution does not fix the curves pointwise, but rather restricts to an involution on each).
To determine $\sigma_x([C_x])$, we compute its intersections with the other generators.

First, observe that $\sigma_x([C_x])\cap [C_y] = [C_x]\cap [C_y]=2$ by applying the involution to both terms in the cap product and recalling that $[C_y]$ is fixed.
For computing $\sigma_x([C_x])\cap [C_x]$, fix say $(X_0':X_1')\in \bP^1$ and note that the condition
\[
	\sigma_x(X_0':X_1') = (X_0':X_1') \quad \text{ in }\bP^1
\]
becomes the equation $H_1\cdot (X_1')^2 - H_{-1}(X_0')^2=0$, which for fixed $X_\bullet$-coordinates, describes a degree $(2,2)$ curve in $\bP^1\times \bP^1$ with the $(Y_\bullet,Z_\bullet)$-coordinates.
Imposing also that the points lie on the surface $\left\lbrace H=0 \right\rbrace$ and in the plane $X_\bullet=(X_0':X_1')$ gives another curve of degree $(2,2)$ and the intersection of two $(2,2)$ curves in $\bP^1\times \bP^1$ has 8 points (by Bezout's theorem).
It follows that $\sigma_x([C_x])\cap [C_x]=8$.
To combine this with the information on the intersection matrix from \autoref{sssec:intersection_theory_calculation}, set $\sigma_x([C_x])=a[C_x]+b[C_y]+c[C_z]$, take intersections with $[C_x],[C_y],[C_z]$ to determine that
\[
	\sigma_x([C_x]) = - [C_x] + 2([C_y]+[C_z])
\]
so in matrix form, using the basis $[C_x],[C_y],[C_z]$ for the subspace they generate in $H_2(X;\bZ)$:
\begin{align}
	\label{eqn:sigma_x_action_on_homology}
	\sigma_x = \begin{bmatrix}
		-1 & 0 & 0\\
		2  & 1 & 0\\
		2  & 0 & 1
	\end{bmatrix}	
\end{align}
and similarly for $\sigma_y,\sigma_z$.

\begin{remark}
	Note that the homogeneous lift \autoref{eqn:Sigma_x_formula}, because it extends to the ambient space of the direct sum of line bundles, also determines the action of $\sigma_x$ on cohomology.
	This action can be read off by considering the homogeneity matrix of $\Sigma_x$ relative to the different variables.

	That such a homogeneous lift exists in general was established by \cite{Cantat_DynamiqueK3} and some of his techniques will be useful later on.
\end{remark}

\subsubsection{Examples of automorphisms}
	\label{sssec:examples_of_automorphisms}
A product of two involutions, say $\sigma_x\sigma_y$, will give an automorphism of the K3 surface which is fixing the fibers of a fibration to $\bP^1$ with the $z$-coordinate.
The fibers are elliptic curves, and the automorphism will act by translations along the elliptic curve.

However, already the product $\sigma_x\sigma_y\sigma_z$ will give a hyperbolic action in cohomology, using \autoref{eqn:sigma_x_action_on_homology}.
This is the typical kind of automorphism with interesting dynamics that one would like to understand.


\subsection{The core pencil}
	\label{ssec:the_core_pencil}

The expression for the automorphisms $\sigma_\bullet$ never involves the ``central'' coefficient of the defining equation $\left\lbrace h=0 \right\rbrace\subset \bG_m^n$.
\begin{definition}[Core pencil]
	\label{def:core_pencil}
	The pencil whose fiber over $t\in \bA^1$ is the hypersurface $\left\lbrace h=t \right\rbrace\subset \bG_m^n$ (or $\subset (\bP^1)^n$) will be called the \emph{core pencil}.
\end{definition}
An advantage of the core pencil is that the same algebraic expressions define automorphisms for all member of the family.

\subsubsection{Tropical core pencil}
	\label{sssec:tropical_core_pencil}
It is in fact convenient to define the tropical function
\begin{align}
	\label{eqn:h_parametrizing_core}
	\Trop(h^\circ)(e_1,\ldots,e_n):=\min_{ \substack{i_\bullet\in{\left\lbrace -1,0,1 \right\rbrace}\\ (i_1,\ldots,i_n)\neq (0,\ldots,0)} } c_{i_1,\ldots,i_n} + i_1 e_1 + \cdots + i_n e_n
\end{align}
so that a tropical Calabi--Yau $X_c$ will be the break locus of $\Trop(h)=\min(\Trop(h^\circ),c)$ for a constant $c$.
The locus $\Trop(h^\circ)=c$ will be called the \emph{skeleton} of $X_c$ and denoted $\Sk(X_c)$ (or $\Sk^{tr}(X_c)$ when it has to be distinguished from the Berkovich skeleton).
When $c$ is fixed, it will be omitted from the notation.

The different skeletons in the core pencil are the different level sets of the same function, so they sweep out $\bR^n$.

\begin{theorem}[The skeleton is preserved]
	\label{thm:the_skeleton_is_preserved}
	The involutions $\Trop(\sigma_i)$ preserve the skeleton of a tropical Calabi--Yau $X_c$ as defined above.
\end{theorem}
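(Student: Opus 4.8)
The plan is to prove the stronger statement that the PL-function $\Trop(h^\circ)$ is itself invariant under each $\Trop(\sigma_i)$; since the skeleton $\Sk(X_c)$ is by definition the level set $\{\Trop(h^\circ)=c\}$, its invariance is then immediate. By symmetry it suffices to treat $\sigma_1$, so I group the monomials of $h^\circ$ according to the exponent $i_1\in\{-1,0,1\}$ of $x_1$. Writing $h_{-1},h_0^\circ,h_1$ for the coefficient-polynomials in the remaining variables $x_2,\ldots,x_n$ (with $h_0^\circ$ the $i_1=0$ part, omitting the central monomial, as in \autoref{sssec:vieta_involutions}), the definition in \autoref{eqn:h_parametrizing_core} factors as
\[
	\Trop(h^\circ)(e_1,\ldots,e_n) = \min\bigl(A,\,B,\,C\bigr),
\]
where $A := \Trop(h_{-1}) - e_1$, $B := \Trop(h_0^\circ)$ and $C := \Trop(h_1) + e_1$, each of $\Trop(h_{-1}),\Trop(h_0^\circ),\Trop(h_1)$ being a PL-function of $e_2,\ldots,e_n$ alone.

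Next I substitute the tropical involution from \autoref{sssec:tropicalizing_the_vieta_involutions}, namely $e_1 \mapsto e_1' := \Trop(h_{-1}) - \Trop(h_1) - e_1$ with $e_2,\ldots,e_n$ held fixed, and compute its effect on the three pieces. A direct substitution gives
\[
	A\mapsto \Trop(h_{-1}) - e_1' = \Trop(h_1) + e_1 = C, \qquad C\mapsto \Trop(h_1) + e_1' = \Trop(h_{-1}) - e_1 = A,
\]
while $B$ is fixed because it does not involve $e_1$. Thus $\Trop(\sigma_1)$ merely exchanges the two outer monomial-groups and fixes the middle one, so $\min(A,B,C)$ is unchanged and $\Trop(h^\circ)\circ\Trop(\sigma_1) = \Trop(h^\circ)$. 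Every level set, and in particular $\Sk(X_c)$, is therefore preserved; the cases of $\sigma_2,\ldots,\sigma_n$ follow identically after relabelling.

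The computation is conceptually forced: it is the tropical shadow of the exact algebraic identity $h\circ\sigma_1 = h$ (the very reason the core pencil of \autoref{def:core_pencil} is preserved), which holds because $\sigma_1$ sends $x_1\mapsto \tfrac{h_{-1}}{h_1 x_1}$ and hence swaps the monomials $x_1 h_1$ and $x_1^{-1}h_{-1}$ while fixing $h_0$. The one point requiring care — and the only place the argument could go astray — is that one must \emph{not} invoke commutation of $\Trop$ with composition, which fails in general because tropicalizing a sum can involve cancellation in the $\min$. Here no such issue arises precisely because the involution permutes the monomial-groups without merging or splitting them, so the $\min$ is preserved on the nose. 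Verifying this permutation directly at the tropical level, rather than tropicalizing the algebraic identity, is what makes the argument rigorous, and I expect this to be the only subtlety worth spelling out.
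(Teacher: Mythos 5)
Your proof is correct and follows essentially the same route as the paper: decompose $\Trop(h^\circ)$ as $\min\bigl(\Trop(h_{-1})-e_1,\,\Trop(h_0^\circ),\,\Trop(h_1)+e_1\bigr)$ and observe that $\Trop(\sigma_1)$ swaps the two outer terms while fixing the middle one, so the value of $\Trop(h^\circ)$ — and hence each level set, i.e. each skeleton — is preserved. You merely spell out the exchange $A\leftrightarrow C$ that the paper calls ``clear,'' which is a fine level of detail.
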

\begin{proof}
	It suffices to prove it for $\Trop(\sigma_1)$, the other involutions being analogous.
	Write the equation for $\Trop(h^\circ)$ in the form
	\[
		\Trop(h^\circ)= \min (-e_1 + \Trop(h_{-1}),\Trop(h_0^\circ),e_1+\Trop(h_1))
	\]
	Since $\Trop(\sigma_1)(x_1)= \Trop(h_{-1})-\Trop(h_1)-e_1$, it is clear that $\Trop(\sigma_1)$ preserves the values of $\Trop(h^\circ)$, hence its level sets.
	The level sets are, by definition, the skeletons.
\end{proof}


\section{The Elliptic Curve Case}
	\label{sec:the_elliptic_curve_case}

In this section we specialize the discussion from \autoref{ssec:the_basic_example} to the case $n=2$, which gives a tropical curve in the plane.
The objects are the same as in \autoref{sec:general_constructions} but for ease of notation, instead of $\Trop(h)$ we use below $h$ and omit $\Trop$ from notation; everything in this section is ``tropicalized''.
After explaining the geometry of the tropical elliptic curve in \autoref{ssec:drawing_the_curve}, we consider the dynamics of the two involutions in \autoref{ssec:elliptic_dynamics}.

\subsection{Drawing the Curve}
	\label{ssec:drawing_the_curve}

The PL-function of interest is of the form
\[
	h(e_1,e_2) = \min_{i,j\in \left\lbrace -1,0,1 \right\rbrace} ( c_{i,j} + i\cdot e_1 + j\cdot e_2 ) 
\]
and the locus where two (or more) of the affine linear functions agree and are smaller than the rest equals the tropical elliptic curve.

\subsubsection{The Tentacles}
	\label{sssec:the_tentacles}
Fix $e_1$ and assume that it is sufficiently negative, so that only the affine linear functions of the form $c_{1,j} + e_1 + j\cdot e_2$ will matter for the definition of $\Trop(h)$.
This implies that the break locus will consist of two horizontal lines, computed as follows.
The functions to consider are
\[
	c_{1,-1} + e_1 - e_2,  \quad c_{1,0} + e_1, \quad c_{1,1} + e_1 + e_2
\]
For $e_2\gg 0$ the first function will be smallest, for $e_2\ll 0 $ the last one, and under a genericity assumption on $c_{i,j}$ the middle function will also be minimal for a bounded set of values of $e_2$.
Setting the appropriate functions to be equal gives the break points $e_2 = c_{1,-1} - c_{1,0}$ and $e_2 = c_{1,0} - c_{1,1}$.

A similar analysis applies to the situations when $e_1\gg 0$, $e_2\ll 0$, $e_2\gg 0$.
It implies that the break locus, outside of a compact set, consists of eight rays, two going out in each cardinal direction.

\begin{figure}[ht]
	\centering
	\includegraphics[width=0.95\linewidth]{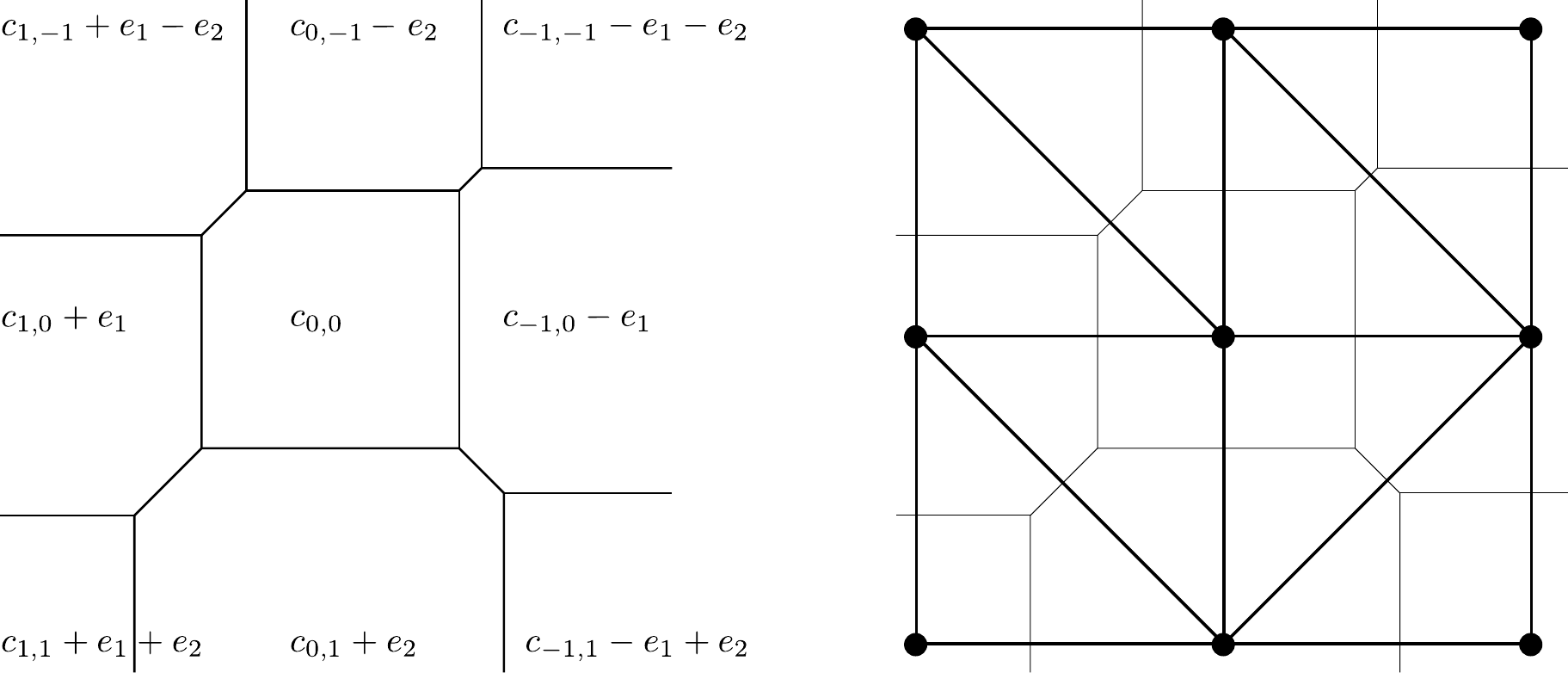}
	\captionsetup{width=0.9\linewidth}
	\caption{Left: The monomials that are minimized in each region of the plane, together with the tropical elliptic curve in the $(e_1,e_2)$-plane.
	Right: The dual subdivision of the Newton polytope.
	The Legendre transform of the function on the left determines the subdivision on the right.
	In the picture, all affine linear functions in the definition of $h$ are minimized for some value of $e_1,e_2$.}
	\label{fig:pictures_elliptic_curve_monomials}
\end{figure}

\subsubsection{The Skeleton}
	\label{sssec:the_skeleton}
Associated to a tropical elliptic curve is its tropical $j$-invariant (see \cite{KMM_TropicalJInvariant}).
When the tropical elliptic curve comes from an algebraic elliptic curve over a valued field, the tropical $j$-invariant is the negative of the valuation of the usual $j$-invariant.
When the $j$-invariant is strictly positive, the curve has an interior cycle which we call its skeleton.
The skeleton is the boundary of the region on which the constant function is minimized.
From now on, we assume that the $j$-invariant is strictly positive and hence that there is a non-trivial skeleton.

\subsubsection{The $\bZ$-structure on the edges and the skeleton}
	\label{sssec:the_Z_structure_edges}
Along each point $p$ in the interior of an edge $E$, there is a natural choice of $\bZ$-lattice $\Lambda_p\subset T_pE$ in the tangent space; this $\bZ$-lattice is obtained by moving $p\in E$ to the origin in $\bR^2$ and intersecting the line generated by $E$ with $\bZ^2$.

At a meeting point of three edges of the tropical curve, there is a balancing condition
\[
	v_1 + v_2 + v_3 = 0
\]
for vectors $v_i$ pointing outwards from the vertex along the edges.
Moreover, each $v_i=(v_{i,1},v_{i,2})\in \bZ^2$ is primitive and has to satisfy $|v_{i,1}|+|v_{i,2}|\leq 3$.
The quotient $\bZ^2/(\bZ v_3)$ is naturally isomorphic to the lattice generated by either of $v_1$ or $v_2$, so there is a natural identification of the lattices along the edges generated by $v_1$ and $v_2$.

This construction gives a well-defined $\bZ$-affine structure on the skeleton.
In the simple case of dimension $1$, this is equivalent to a volume form (or a Riemannian metric).
The length of the skeleton, for this structure, is equal to the tropical $j$-invariant.

\subsection{Elliptic Dynamics}
	\label{ssec:elliptic_dynamics}

\subsubsection{The reflections}
	\label{sssec:the_reflections}

Recall (see \autoref{sssec:tropicalizing_the_vieta_involutions}) that a tropical reflection is given by the formula
\[
	\sigma_2(e_1,e_2) = (e_1,h_1(e_1) - h_{-1}(e_1) - e_2)
\]
where
\begin{align*}
	h_1(e_1) & = \min \left( c_{1,1} + e_1,\, c_{0,1},\, c_{-1,1} - e_1 \right)\\
	h_{-1}(e_1) & = \min \left( c_{1,-1} + e_1,\, c_{0,-1},\, c_{-1,-1} - e_1 \right)
\end{align*}
Each of the functions $h_{\pm 1}$ is PL with two points where it changes slope, and the slopes are $-1, 0, 1$.

The \emph{reflection line} which is fixed by $\sigma_2$ will be of the form $(e_1, \frac{1}{2}\left( h_1(e_1)-h_{-1}(e_1) \right)$ and so will have slopes ranging $-1$ to $1$ with steps of $\frac{1}{2}$.
Note that the two infinite rays of the reflection line will be parallel and pointing in opposite directions, since the slopes of $h_1$ and $h_{-1}$ agree ``at infinity'', i.e. for $e_1\gg 0$ or $0\gg e_1$.
The values of $e_1$ where the reflection line will change slope correspond to those where one of $h_{\pm 1}$ changes slope, so there will be (typically) four of them (the atypical case is that some of the points can coalesce).

A similar analysis applies to the second reflection $\sigma_1$.

\begin{figure}[ht]
	\centering
	\includegraphics[width=0.5\linewidth]{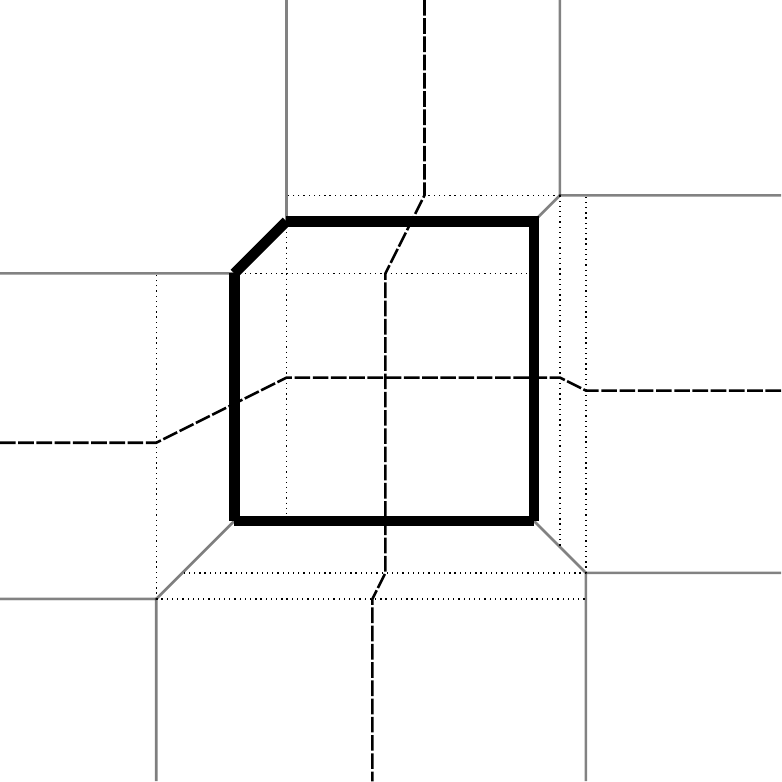}
	\caption{A tropical elliptic curve with the skeleton in bold and dashed reflection lines.
	The dotted vertical and horizontal lines denote the points where the reflection lines change slope.}
	\label{fig:tropical_elliptic}
\end{figure}

A tropical reflection will usually not map the tentacles of the elliptic curve to other tentacles, except for those which are aligned with the reflection line.
However, the skeleton will be preserved by all reflections by \autoref{thm:the_skeleton_is_preserved}.

\begin{wrapfigure}{R}{0.4\textwidth}
	\centering
	\includegraphics[width=\linewidth]{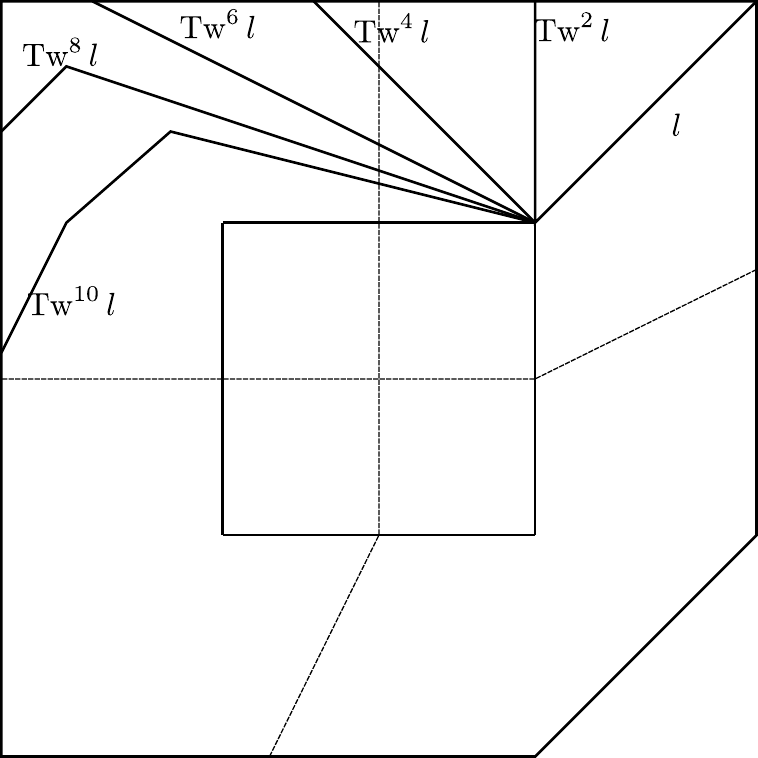}
	\captionsetup{width=0.95\linewidth}
	\caption{The iterate of a segment under the twist map, an analogue of \autoref{ssec:rubik_s_cube_example} in the present case.}
	\label{fig:pictures_elliptic_rubik}
\end{wrapfigure}

\begin{proposition}[Reflections preserve the $\bZ$-structure]
	\label{prop:reflections_preserve_the_}
	A tropical reflection of a tropical elliptic curve preserves the integral structure (in the sense of \autoref{sssec:the_Z_structure_edges}) on the skeleton.
	Equivalently, reflections are measure-preserving for the Lebesgue measure on the skeleton induced by the integral structure.
\end{proposition}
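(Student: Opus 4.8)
The plan is to reduce everything to the observation that, on each of its affine pieces, the reflection $\sigma_2$ acts by an element of $\GL_2(\bZ)$, and then to exploit the fact that both the lattices $\Lambda_p$ and the induced Lebesgue measure on the skeleton are defined purely in terms of the ambient lattice $\bZ^2\subset\bR^2$. So the core of the argument is an elementary computation of the linear part of $\sigma_2$, after which the geometric statements follow formally.

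First I would compute the derivative of $\sigma_2$ on a region where $h_1$ has slope $a$ and $h_{-1}$ has slope $b$, with $a,b\in\{-1,0,1\}$ (recall that each of $h_{\pm1}$ is PL with slopes in $\{-1,0,1\}$). From $\sigma_2(e_1,e_2)=(e_1,\,h_1(e_1)-h_{-1}(e_1)-e_2)$ the second coordinate equals $(a-b)e_1+\mathrm{const}-e_2$, so the linear part there is
\[
	D\sigma_2 = \begin{bmatrix} 1 & 0 \\ a-b & -1 \end{bmatrix},
\]
whose lower-left entry $a-b\in\{-2,-1,0,1,2\}$ is an integer and whose determinant is $-1$. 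Hence $D\sigma_2\in\GL_2(\bZ)$ on every piece, and in particular it maps $\bZ^2$ onto $\bZ^2$.

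Next I would deduce preservation of the integral structure. By \autoref{thm:the_skeleton_is_preserved}, $\sigma_2$ restricts to a PL homeomorphism of the skeleton. For $p$ in the interior of an edge $E$, the lattice $\Lambda_p$ is by definition the intersection of the line through the origin parallel to $E$ with $\bZ^2$ (see \autoref{sssec:the_Z_structure_edges}); since $D\sigma_2$ carries $\bZ^2$ onto $\bZ^2$ and the direction of $E$ to the direction of the image edge, it carries $\Lambda_p$ onto $\Lambda_{\sigma_2(p)}$, sending primitive vectors to primitive vectors. The same $\GL_2(\bZ)$-equivariance is compatible with the gluing of lattices across a trivalent vertex: the balancing vectors $v_i$ have integer primitive images $D\sigma_2(v_i)$ still satisfying $v_1+v_2+v_3=0$ after passing to the relevant quotient, so the identifications defining the $\bZ$-affine structure are respected. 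For the measure statement, note that the induced Lebesgue measure assigns to a segment the number $|k|$ when its endpoints differ by $k$ times a primitive lattice vector; as $D\sigma_2$ is affine on each piece and sends primitive vectors to primitive vectors, this number is unchanged. The argument for $\sigma_1$ is identical.

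I expect the only genuine obstacle to be bookkeeping rather than mathematics: the breakpoints of $\sigma_2$ (the vertical lines where $h_{\pm1}$ change slope) need not coincide with the vertices of the skeleton, so a single edge of the skeleton may be subdivided into several affine pieces by $\sigma_2$. One must check that across each such auxiliary breakpoint the continuous map $\sigma_2$ matches up its $\GL_2(\bZ)$ pieces consistently, so that the lattice identifications agree and the finitely many breakpoints contribute a measure-zero set. Once this matching of the piecewise structure of $\sigma_2$ against that of the skeleton is verified, the proposition follows from the $\GL_2(\bZ)$ computation above.
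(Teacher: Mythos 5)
Your proposal is correct and follows essentially the same route as the paper's proof: both reduce the statement to the fact that the reflection acts on a partition of $\bR^2$ into strips by affine maps with linear part in $\GL_2(\bZ)$, and that the $\bZ$-structure on the skeleton's edges is induced from the ambient $\bZ^2$, which the $\GL_2(\bZ)$-action preserves. Your explicit computation of $D\sigma_2$ and your attention to the breakpoint bookkeeping and vertex gluing are finer-grained than the paper's four-line argument, but they are elaborations of the same idea rather than a different approach.
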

\begin{proof}
	For each reflection, there is a partition of $\bR^2$ into polyhedral domains (in fact strips) such that on each of them, the reflection acts as an affine map with linear part a matrix in $\GL_2(\bZ)$.
	This $\bZ$-structure is compatible with the ambient $\bZ^2$-structure of $\bR^2$.
	The skeleton of the tropical curve breaks up into segments, on each of which the $\bZ$-structure as defined in \autoref{sssec:the_Z_structure_edges} is also induced from the ambient $\bR^2$, since the segments have rational defining vectors.
	Since the $\GL_2(\bZ)$-action preserves the ambient $\bZ$-structure, if it takes one segment to another it will respect their induced $\bZ$-structure.
\end{proof}

\subsubsection{Rotation on the circle}
	\label{sssec:rotation_on_the_circle}
For a circle, the composition of two reflections is a rotation by twice the angle between the fixed points of each reflection.
The action of the two reflections on the skeleton of a tropical elliptic curve is conjugated to the action of two reflections on a circle.

To find the angle of reflection, one must first compute the total Lebesgue volume of the skeleton of the elliptic curve.
Then, one computes the total Lebesgue measure between two fixed points of the reflections.
The ratio of the two volumes is one half of the rotation angle.

\subsubsection{Rotation on the disc}
	\label{sssec:rotation_on_the_disc}

The two reflections of a tropical elliptic curve preserve not just the skeleton, but also the interior of the skeleton.
Moreover, it is clear that the natural Lebesgue volume form is also conserved.
The two reflections will preserve a foliation of the interior by PL-curves, each of which is isomorphic to the skeleton of a tropical elliptic curve (the foliation will have a segment in the center which is fixed by each rotation).

\subsubsection{The scaffolding in the skeleton}
	\label{sssec:the_scaffolding_in_the_skeleton}
Recall (\autoref{ssec:the_core_pencil}) that the curves which are invariant by both reflections are in fact level sets of a natural function
\[
	h^\circ(e_1,e_2) := \min_{\substack{i,j\in \left\lbrace -1,0,1 \right\rbrace \\ (i,j)\neq (0,0)}} c_{i,j} + i\cdot e_1 + j \cdot e_2
\]
which is the same as the function defining the tropical elliptic curve, except that the constant function is omitted from the minimization.
Equivalently, the invariant curves can be viewed as a $1$-parameter family of tropical elliptic curves, whose defining equations have varying $c_{0,0}$ term.

\begin{figure}[ht]
	\centering
	\includegraphics[width=0.95\linewidth]{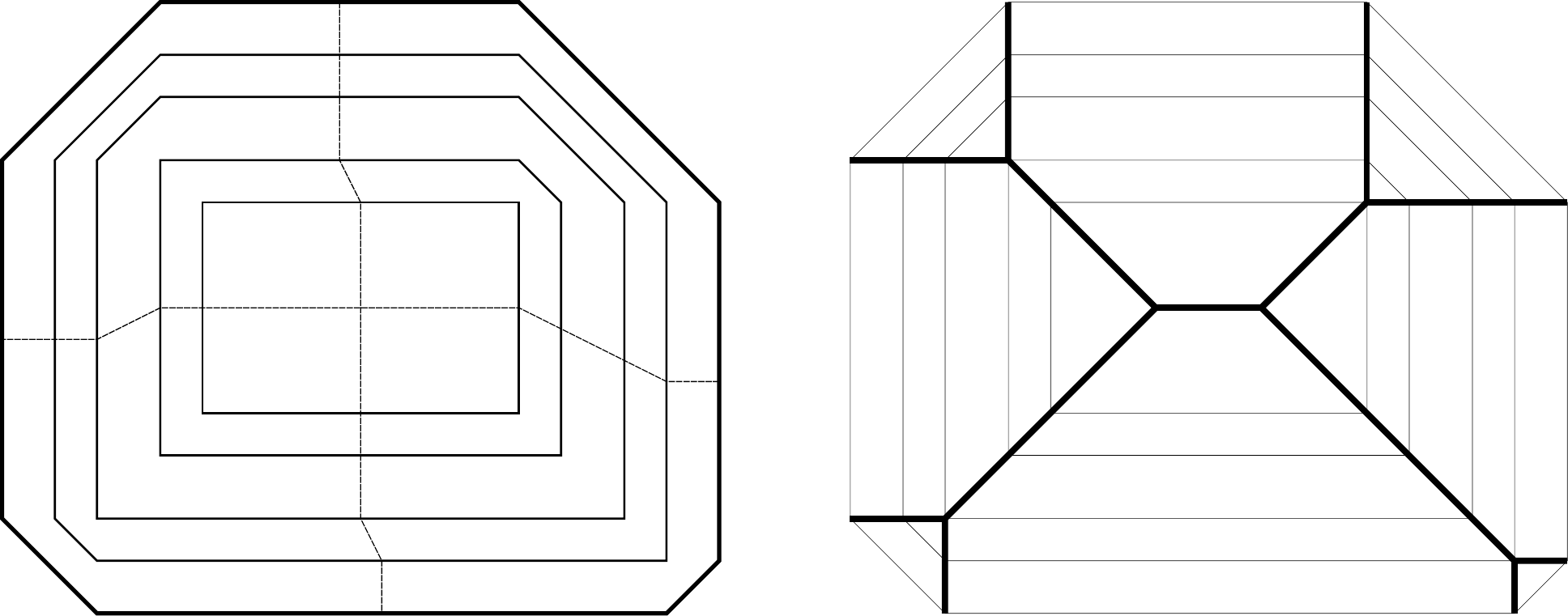}
	\captionsetup{width=0.95\linewidth}
	\caption{Left: The invariant curves of the rotation and the lines of reflection for the involutions.
	Right: The break lines of the function $h^\circ$ which determines the core pencil.}
	\label{fig:pictures_disc_twisting_region}
\end{figure}

The function $h^\circ$ is PL and concave, with derivative constant on some convex domains.
The jump set of the derivative gives a $1$-dimensional complex in $\bR^2$ (see \autoref{fig:pictures_disc_twisting_region}).

\begin{remark}
	\label{rmk:variation_of_twist_angle}
	At the level set which is a maximum of the function $h^\circ$, the composition $\sigma_1\sigma_2$ acts by a finite-order automorphism (of order at most $2$).
	On the level set $h^\circ=c$ with $c\to -\infty$, the composition $\sigma_1\sigma_2$ again approaches a finite-order automorphism.
	The reason is that most of the mass of the skeleton is concentrated on segments with slopes $\pm \frac{\pi}{4}$, which are almost exchanged by the involutions.
	It would be interesting to investigate the variation of angle of rotation of $\sigma_1\sigma_2$ from the maximum of $h^\circ$ to $-\infty$.
\end{remark}


\section{General properties of tropical K3 automorphisms}
	\label{sec:general_properties_of_tropical_k3_automorphisms}

In this section we continue the study of the basic example from \autoref{ssec:the_basic_example} in the case $n=3$, leading to K3 surfaces.
Our focus will now be on the dynamics of an automorphism which acts hyperbolically on the Picard group, where the action on the group is determined by the matrices from \autoref{eqn:sigma_x_action_on_homology}.
We will construct a potential for the dynamics, analogous to classical constructions in complex dynamics, e.g. those of Hubbard--Papadopol \cite{HubbardPapadopol}.
This standard construction uses homogeneous variables, discussed in \autoref{ssec:homogenizations}, and yields the corresponding potential as an application of the contraction mapping principle in \autoref{thm:existence_of_a_potential}.
However, to understand the positivity properties of the potential will require Berkovich spaces, studied in \autoref{sec:berkovich_spaces_and_k3_dynamics}.

For notation, we continue to write $h,h^\circ$ instead of $\Trop(h),\Trop(h^\circ)$ and generally omit $\Trop$ from notation, as all concepts are tropical.
For convenience of notation, we switch to variables $x,y,z\in \bR$ instead of $e_1,e_2,e_3$.

\subsection{Homogenizations}
	\label{ssec:homogenizations}

\subsubsection{Some conventions}
	\label{sssec:some_conventions}
For discussing below the homogenization of a tropical automorphism, we will replace the inhomogeneous variable $x\in \bR$ by the pair $(X_0:X_1)\in \bR^2$ with the quotient map $x=X_1-X_0$ (and similarly for $y,z$).
For convenience, let $\bbV:=\begin{bmatrix}
	1\\
	1
\end{bmatrix}$ denote a vector in the kernel of this quotient map, and write $\bbX $ for $\begin{bmatrix}
	X_1\\
	X_0
\end{bmatrix}$.

When passing to tropical functions, homogeneity will be additive and not multiplicative.
Specifically, a function $\phi\colon \bR^2\to \bR$ will be called \emph{homogeneous of degrees $\alpha$} (for $\alpha\in \bR$) if $\phi(\bbX + \bbV\cdot t)=\phi(\bbX)+ t\cdot \alpha$ for all $t\in \bR$.

More generally, view $(\bR^2)^3=\bR^{2\times 3}$ as $2\times 3$ matrices.
For a column vector $\alpha\in \bR^3$, a function $\phi\colon (\bR^2)^3\to \bR$ will be called \emph{homogeneous of degree $\alpha$} if:
\[
	\phi(p + \bbV \cdot t) = \phi(p) + t\cdot \alpha
\]
for every row vector $t\in \bR^3$ and $2\times 3$ matrix $p$.

Yet more generally, a transformation $\Phi\colon(\bR^2)^3\to (\bR^2)^3$ will be called \emph{homogeneous of degree $M_\Phi$}, where $M_\Phi$ is a $3\times 3$ matrix, if
\[
	\Phi(p + \bbV\cdot t) = \Phi(p) + \bbV \cdot t \cdot M_\Phi
\]

Instead of ``homogeneous of degree $\bullet$'' we will also say ``$\bullet$-homogeneous''.

A direct calculation shows that if $\phi\colon(\bR^2)^3\to \bR$ is $\alpha$-homogeneous and $\Phi$ is $M_\Phi$-homogeneous, then the pulled-back function $\Phi^*\phi$ is $(M_\Phi\cdot \alpha)$-homogeneous.

\subsubsection{The basic example}
	\label{sssec:the_basic_example}
Recall from \autoref{ssec:the_basic_example} that we have the tropical ``rational function''
\[
	h^\circ(x,y,z):= \min_{
	\substack{i,j,k\in \left\lbrace -1,0,1 \right\rbrace \\ 
	(i,j,k)\neq (0,0,0)}}
	{i}x + j y + k z + c_{i,j,k} 
\]
and that the level sets of $h^\circ$ in $\bR^3$ are the skeletons of a family of K3 surfaces (see \autoref{ssec:the_core_pencil}).
Fix a value $c$ of $h^\circ$ and let $\Sk(X):=\left\lbrace h^\circ = c \right\rbrace$ be the corresponding level set, which is the skeleton of the tropical K3 surface associated to $h=\min(h^\circ,c)$.

Recall that to describe an involution that preserves $\Sk(X)$, the $x$ terms in the minimization for $h^\circ$ are grouped as:
\[
	h^\circ(x,y,z) = \min \left( -x + h_{x,-1}(y,z),\,\, h_{x,0}(y,z),\,\, x + h_{x,1}(y,z) \right)
\]
where $h_{x,\bullet}$ are themselves tropical rational functions of $y,z$ written using minimizations in $\pm y,\pm z$.
Then the following involution preserves the level sets of $h$:
\begin{align}
	\label{eqn:sigma_x_tropical}
	\begin{split}
	\sigma_x(x,y,z) & = (x',y,z) \quad \text{ where }\\
	x' & = h_{x,-1}(y,z) - h_{x,1}(y,z) - x
	\end{split}
\end{align}
Indeed, the fixed point of the involution is $x_0 = \frac{1}{2}(h_{x,-1}-h_{x,1})$, which is the $x$-value where $-x+h_{x,-1} = x+h_{x,1}$, and the reflected point can be expressed as $x' = -(x-x_0)+x_0$.
The formulas hold for all values of $y,z$, which are implicit and have been omitted in the last sentence.

\subsubsection{Homogenizing the equations}
	\label{sssec:homogenizing_the_equations_K3}
The natural bundle over $\bR^3$ to which the tropical automorphism lifts is:
\begin{align*}
	\bR^2\times \bR^2 \times \bR^2 & \to \bR^3\\
	\left( X_0:X_1 \right)\times \left( Y_0:Y_1 \right)\times \left( Z_0:Z_1 \right) & 
	\to (X_1-X_0, Y_1-Y_0,Z_1-Z_0)
\end{align*}
The homogenization of $h^\circ$ then becomes:
\begin{align*}
	H^\circ(\bbX,\bbY,\bbZ) = \min_{
	\substack{i,j,k\in \left\lbrace -1,0,1 \right\rbrace \\ 
	(i,j,k)\neq (0,0,0)}}
	P_i(X_0:X_1) + P_j(Y_0:Y_1) + P_k(Z_0:Z_1) + c_{i,j,k}
\end{align*}
where
\[
	P_{-1}(A_0:A_1)=2A_0 \quad P_0(A_0:A_1) = A_0+A_1 \quad P_1(A_0:A_1) = 2A_1
\]
Note that each $P_\bullet$ is homogeneous of degree $2$ (see \autoref{sssec:some_conventions}), so $H^\circ$ is homogeneous of degree $(2,2,2)$.

The equation $\left\lbrace h^\circ=c \right\rbrace$ gives the skeleton $\Sk(X)$ and its lift now becomes
\[
	H^\circ(\bbX,\bbY, \bbZ) = c + (X_0 + X_1) + (Y_0 + Y_1) + (Z_0 + Z_1)
\]
Define $E\subset (\bR^2)^3$ to be the locus of $(\bbX,\bbY,\bbZ)$ where the above equation holds.
Then $E$ is the total space of an $\bR^3$-bundle over $\Sk(X)$:
\[
	E\to \Sk(X)
\]
and the lift of the automorphisms of $\Sk(X)$ to $E$ will be the basic object of study below.

\subsubsection{Homogenizing the automorphisms}
	\label{sssec:homogenizing_the_automorphisms_K3}
Group again the terms in $H^\circ$ according to the $X$-variable:
\begin{align*}
	H^\circ(\bbX,\bbY,\bbZ) = \min & \big( 2X_0 + H_{X,-1}(\bbY, \bbZ), X_0 + X_1 + H_{X,0}(\bbY,\bbZ),\\
	& 2X_1 + H_{X,1}(\bbY,\bbZ) \big)
\end{align*}
so that the lifted automorphism (see \autoref{eqn:sigma_x_tropical}) can be written as:

\begin{align}
	\label{eqn:Sigma_x}
	\Sigma_X(X_0:X_1 ) = \big( H_{X,1}(\bbY,\bbZ) - X_0 : H_{X,-1}(\bbY,\bbZ) - X_1 \big)
\end{align}

An alternative lift of $\sigma_x$ which removes the negative signs would be given by adding $(X_1+X_0)\cdot (1:1)$ to the $X$-coordinate:
\begin{align}
\label{eqn:Sigma_x_pos}
\begin{split}
	\Sigma_X^{pos}(X_0, X_1) & = \big( X_1 + H_{X,1}(\bbY,\bbZ) : X_0 + H_{X,-1}(\bbY,\bbZ) \big)
\end{split}	
\end{align}

\begin{remark}
The advantage of $\Sigma_X^{pos}$ is that it only involves positive slopes, while the advantage of $\Sigma_X$ is that it is a genuine involution: $(\Sigma_X)^2=id$.

The homogeneity matrices of $\Sigma_X$ and $\Sigma^{neg}_X$ are:
\begin{align*}
	M_{\Sigma_X} =
	\begin{bmatrix}
		-1 & 0 & 0 \\
		2 & 1 & 0 \\
		2 & 0 & 1
	\end{bmatrix}
	\quad 
	M_{\Sigma^{pos}_X} =
	\begin{bmatrix}
		1 & 0 & 0 \\
		2 & 1 & 0 \\
		2 & 0 & 1
	\end{bmatrix}.
\end{align*}
Comparing with \autoref{eqn:sigma_x_action_on_homology}, it is clear that $\Sigma_X$ is the right choice of lift.
However, it is tempting to use $\Sigma^{pos}_X$ since the methods of \autoref{sec:expanding_pl_maps_on_the_line} below would imply that the potential constructed in \autoref{thm:existence_of_a_potential} would be, in fact, convex (and with controlled derivatives).
\end{remark}


\subsection{Constructing a potential}
	\label{ssec:constructing_a_potential}

\subsubsection{Eigen-homogenizations}
	\label{sssec:eigen_homogenizations}
Given an automorphism $f\colon \Sk(X)\to \Sk(X)$ written as a product of the involutions $\sigma_\bullet$, there are many possible lifts $F\colon E\to E$ to the total space of the bundle.
Each lift has a homogeneity matrix $M_F$ and determines possible eigen-homogenizations $v_F$ (column vector), with eigenvalue $\lambda_F$, satisfying $M_F v_F= \lambda_F  v_F$.
In particular, the class of $v_F$-homogeneous functions on $E$ (or more generally $(\bR^2)^3$) will be invariant under pullback by $F$ and rescaling by $\lambda_F$.
The interesting case is when $\lambda_F>1$, or $1>\lambda_F$ in which one applies the arguments to $F^{-1}$.

\begin{definition}[Potentials]
	\label{def:potentials}
	A \emph{potential} for the action of $F$ on $E$ is a $v_F$-homogeneous function $G\colon E\to \bR$ satisfying
	\[
	G(F(p)) = \lambda_F \cdot G(p)
	\]

\end{definition}

\subsubsection{Cocycles}
	\label{sssec:cocycles}
Fix a linear section of the projection $(\bR^2)^3\to \bR^3$, for example
\[
	\sigma(x,y,z) = \begin{bmatrix}
		x/2 & y/2 & z/2 \\
		-x/2 & -y/2 & -z/2
	\end{bmatrix}
\]
In order to relate the potentials on $E$ to the dynamics on the base, one can pull the potentials back using the section.
The dynamics will determine a cocycle:
\[
	F(\sigma(p)) - \sigma(f(p)) =: \bbV\cdot c_F(p)
\]
where $c_F(p)$ is row $3$-vector and $\bbV=\begin{bmatrix}
	1\\
	1
\end{bmatrix}$.
Defining $g(p):=G(\sigma(p))$ for a potential $G\colon E\to \bR$, this gives the functional equation
\[
	g(f(p)) = \lambda_F\cdot g(p)+ (c_F(p)\cdot v_F)
\]
where $v_F$ is, as above, the homogeneity of $G$ (hence a column vector).

\begin{theorem}[Existence of a potential]
	\label{thm:existence_of_a_potential}
	Suppose that $F\colon E\to E$ is a lift of the automorphism $f\colon \Sk(X)\to \Sk(X)$ and $M_F$ is the homogeneity matrix of $F$ in the fiber direction of $E$.
	If $M_F$ has a real eigenvector $v_F$ with eigenvalue $\lambda_F>1$, then there exists a unique continuous
	\[
		G\colon E \to \bR
	\]
	which is $v_F$-homogeneous and satisfies $G(F(p))=\lambda_F \cdot G(p)$.
\end{theorem}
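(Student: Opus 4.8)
The plan is to push the problem down to the compact base $\Sk(X)$, solve a scalar functional equation there by the contraction mapping principle, and then reassemble $G$ on the total space $E$ using $v_F$-homogeneity. First I would record that the section $\sigma$ of \autoref{sssec:cocycles} meets each fiber of $E\to \Sk(X)$ exactly once (each fiber is the orbit $\sigma(p)+\bbV\cdot t$, $t\in\bR^3$), so a $v_F$-homogeneous $G$ is completely determined by its restriction $g:=G\circ\sigma$ along the section via the homogeneity rule $G(\sigma(p)+\bbV\cdot t)=g(p)+t\cdot v_F$. Conversely any continuous $g$ on $\Sk(X)$ produces through this formula a continuous $v_F$-homogeneous $G$, so existence and uniqueness of $G$ is equivalent to that of $g$. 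As recorded in \autoref{sssec:cocycles}, the relation $G\circ F=\lambda_F\cdot G$ is equivalent, along $\sigma$, to
\[
	g(f(p)) = \lambda_F\cdot g(p) + c_F(p)\cdot v_F,
\]
where the cocycle $c_F$ is continuous because $F$, $f$, and $\sigma$ are all continuous (indeed PL).

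Next I would recast this as a fixed-point problem. Solving the functional equation for $g(p)$ gives
\[
	g(p) = \frac{1}{\lambda_F}\big( g(f(p)) - c_F(p)\cdot v_F \big),
\]
so I define the operator $T\colon C(\Sk(X))\to C(\Sk(X))$ by
\[
	(Tg)(p) := \frac{1}{\lambda_F}\big( g(f(p)) - c_F(p)\cdot v_F \big).
\]
This is well-defined: $\Sk(X)$ is compact (homeomorphic to the $2$-sphere), so $c_F\cdot v_F$ is bounded and continuous and $Tg$ is again continuous. Because $f\colon\Sk(X)\to\Sk(X)$ is a homeomorphism, precomposition preserves the sup norm, whence for $g_1,g_2\in C(\Sk(X))$ one has
\[
	\norm{Tg_1 - Tg_2}_{\infty} = \frac{1}{\lambda_F}\norm{g_1\circ f - g_2\circ f}_{\infty} = \frac{1}{\lambda_F}\norm{g_1 - g_2}_{\infty}.
\]
Since $\lambda_F>1$ the factor $1/\lambda_F$ is strictly less than $1$, so $T$ is a contraction of the Banach space $C(\Sk(X))$, and the Banach fixed-point theorem yields a unique continuous $g$ with $Tg=g$, i.e.\ a unique continuous solution of the functional equation.

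Finally I would reconstruct $G$ from $g$ by the homogeneity formula and verify that $G\circ F=\lambda_F\cdot G$ holds on all of $E$, not merely along $\sigma$. The relevant compatibility is the one from \autoref{sssec:some_conventions}: since $F$ is $M_F$-homogeneous and $G$ is $v_F$-homogeneous, $G\circ F$ is $(M_F v_F)$-homogeneous, which by the eigenvector hypothesis $M_F v_F=\lambda_F v_F$ equals $\lambda_F v_F$; and $\lambda_F\cdot G$ is likewise $\lambda_F v_F$-homogeneous. Thus both sides of $G\circ F=\lambda_F\cdot G$ are $\lambda_F v_F$-homogeneous functions on $E$ that agree on the image of $\sigma$ (by the functional equation for $g$), hence agree everywhere. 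I expect the genuine content of the argument to lie precisely in this last bookkeeping: the contraction is immediate once the spectral gap $\lambda_F>1$ and the compactness of $\Sk(X)$ are in hand, so the point requiring care is that solving the scalar equation on the base truly reassembles into the homogeneous equation on $E$ — which is exactly what the eigenvector relation $M_F v_F=\lambda_F v_F$ guarantees.
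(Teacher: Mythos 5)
Your proposal is correct and is essentially the paper's own argument: both apply the Banach fixed-point theorem to the operator $\tfrac{1}{\lambda_F}F^*$, the only difference being that the paper contracts on the space $\cC(E,v_F)$ of $v_F$-homogeneous continuous functions on $E$ equipped with the sup-distance (which is finite precisely because differences of such functions descend to the compact base $\Sk(X)$), whereas you first use the section $\sigma$ of \autoref{sssec:cocycles} to identify $\cC(E,v_F)$ with $\cC(\Sk(X))$ and contract there. Under that identification your operator $T$ \emph{is} the paper's $\tfrac{1}{\lambda_F}F^*$, and your section-plus-cocycle bookkeeping is exactly the affine-space structure of $\cC(E,v_F)$ over $\cC(\Sk(X))$ that the paper invokes, so the two proofs coincide up to presentation.
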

\begin{proof}
	Let $\cC(E,v_F)$ denote the space of continuous functions on $E$, which are $v_F$-homogeneous and equipped with the distance function
	\[
		\dist(G_1,G_2):=\sup_{e\in E} |G_1(e)-G_2(e)|.
	\]
	It is a complete metric space, since the base $\Sk(X)$ is compact, and the functions have the same homogeneity in the fiber direction.
	Therefore the difference $|G_1(e)-G_2(e)|$ is independent of the choice of point in the fiber.
	Put differently, $\cC(E,v_F)$ has a natural structure of \emph{affine} space\footnote{Its non-emptiness can be checked using a partition of unity on $\Sk(X)$.} over $\cC(\Sk(X))$, the space of continuous functions on $\Sk(X)$.

	The transformation $\frac{1}{\lambda_F}F^*\colon\cC(E,v_F)\to \cC(E,v_F)$ is contracting distances by a factor of $\lambda_F$, so it has a unique fixed point -- the desired $G$.
\end{proof}

\begin{remark}[On compatibility]
	From the proof it is clear that one can also use a space of \Holder-continuous functions (with \Holder exponent depending on $\lambda_F$ and a choice of metric on $\Sk(X)$) to find that $G$ is \Holder.
	In fact, with the appropriate convexity properties of $G$, one would expect $G$ to be Lipschitz.

	In \autoref{sec:berkovich_spaces_and_k3_dynamics} below, we sketch a relation to Berkovich spaces and explain why one expects further convexity properties of the potential $G$ constructed in \autoref{thm:existence_of_a_potential}, assuming a correct choice of lift of the automorphism to the vector bundle.
\end{remark}

\subsubsection{Dynamics at infinity}
	\label{sssec:dynamics_at_infinity}
One can view the tropical projective line as $\bP^{1,tr}=\left\lbrace -\infty \right\rbrace \cup \bR\cup \left\lbrace +\infty \right\rbrace$ with the appropriate topology.
Then the action of the involutions $\sigma_\bullet$ extends, in fact to $(\bP^{1,tr})^3$ continuously.
The action of $\sigma_x$ exchanges the strata $\left\lbrace \pm\infty \right\rbrace\times (\bP^{1,tr})^2$, and the actions of $\sigma_y,\sigma_z$ preserve these two strata and give elliptic dynamics on each of them, as in \autoref{sec:the_elliptic_curve_case}.
However, the two elliptic dynamical systems will not be isomorphic (except for special choices of the parameters) so the automorphism $\sigma_x\sigma_y\sigma_z$ will mix the two elliptic factors.

It would be interesting to investigate this area-preserving system further.
For example, is there some finite Lebesgue measure set on the two planes that will be invariant?




\section{Berkovich spaces and K3 dynamics}
	\label{sec:berkovich_spaces_and_k3_dynamics}

The theory of Berkovich spaces was started in \cite{Berkovich_SpectralTheory}.
An introduction, with a view towards dynamics in dimension $1$, is in the notes of Baker \cite{Baker_IntroductionBerkovich}.

Our goal in this section is to develop the analogues of the constructions in \autoref{sec:general_properties_of_tropical_k3_automorphisms} in the setting of Berkovich spaces, where a more flexible and developed formalism is available.
The basic definitions are contained in \autoref{ssec:the_berkovich_analytification}, followed by a discussion of the skeleton from the point of view of Berkovich geometry in \autoref{ssec:berkovich_skeletons}.
Line bundles, which will be used when constructing the potentials, are discussed in \autoref{ssec:line_bundles_on_berkovich_spaces}.
The relation of the previous constructions with tropical geometry is discussed in \autoref{ssec:berkovich_spaces_and_tropicalization}.
After recalling the formalism of differential forms and currents on Berkovich spaces in \autoref{ssec:differential_forms_on_berkovich_spaces}, we construct the currents scaled by the dynamics in \autoref{ssec:the_currents_in_berkovich_dynamics}.

Note that the definition of currents and differential forms on Berkovich spaces makes use of tropicalizations.
Thus, many of the implicit constructions in this section can be made explicit in the setting of \autoref{sec:general_properties_of_tropical_k3_automorphisms}.

\paragraph{Notation}
Let $K$ be a field, complete with respect to a non-trivial non-archimedean absolute value $|-|$.
The valuation on $K$ is defined by $v\colon K^\times \to \bR$ as $v(x):=-\log(|x|)$.
Let $R$ denote the valuation ring, i.e. $R=\left\lbrace x \colon v(x)\geq 0 \right\rbrace$ and $\frakm$ the maximal ideal, defined by $\frakm = \left\lbrace x \colon v(x)>0 \right\rbrace$.
The residue field is $k:=R/\frakm$.
Assume that $K$ and $k$ have both characteristic zero.

The main example is $K=\bC((t))$, so that $R=\bC[[t]]\supset t\cdot \bC[[t]]=\frakm$ and $k=\bC$.
It is often convenient to assume that $K$ is also algebraically closed, in which case an example is the field of Puiseux series $K:=\bigcup_n \bC((t^{1/n}))$ (the residue field is still $\bC$).

All examples relevant for this paper will start with a quasi-projective variety $X$ inside a toric variety (e.g. $\bP^n$) which simplifies significantly the discussion.
In particular, the relevant line bundles will have a natural algebraic structure.


\subsection{The Berkovich analytification}
	\label{ssec:the_berkovich_analytification}

Let $A$ be a finitely generated $K$-algebra; its usual spectrum $\Spec(A)$ consists of all the prime ideals in $A$.

\begin{definition}[Berkovich analytification]
	\label{def:berkovich_analytification}
	The \emph{Berkovich analytification} $\Spec(A)^{an}$ is the set of all multiplicative seminorms on $A$ extending the fixed norm $|-|$ on $K$.
\end{definition}

\subsubsection{Some associated objects}
	\label{sssec:some_associated_objects}
For $x\in \Spec(A)^{an}$ with seminorm $|-|_x$, its kernel $\frakp_x=\left\lbrace a\in A\colon |a|_x=0 \right\rbrace$ is a prime ideal, so there is a natural map $\Spec(A)^{an}\to \Spec(A)$.

The quotient $A/\frakp_x$ is an integral domain, equipped now with a genuine norm also denoted $|-|_x$.
The fraction field of $A/\frakp_x$ is denoted $\kappa(x)$ and the completion of $\kappa(x)$ for the norm $|-|_x$ is denoted $\crH(x)$.

For a general scheme $X$ over $K$, the above construction glues along local charts and defines a space $X^{an}$.
The discussion below takes place on a fixed open set $U=\Spec(A)$ and all notions are local.

\subsubsection{Topology and functions on the Berkovich spectrum}
	\label{sssec:topology_and_functions_on_the_berkovich_spectrum}
Every element $f\in A$ determines a function
\begin{align*}
	|f| \colon U^{an}& \to \bR\\
	x &\mapsto |f|_x
\end{align*}
and the topology on $U^{an}$ is the smallest one for which the above maps are all continuous.

There is also a sheaf of ``holomorphic functions'' on $U^{an}$.
For this, note first that the algebra $A$ determines ``polynomial functions'' $a(x)\in \crH(x)$ on $U^{an}$, and their ratios determine ``rational functions'', with poles along closed subsets.
The holomorphic functions $\cO_{X^{an}}$ are then assignments $f(x)\in \crH(x)$ such that in some neighborhood of $x$, $f$ can be uniformly approximated by rational functions.

\subsubsection{Models}
	\label{sssec:models}
Suppose that $X$ is a scheme over $K$.
A \emph{model} of $X$ is an $R$-scheme $\cX$ which is flat and of finite type\footnote{One could also require properness} over $R$, normal and separated, and equipped with an isomorphism $\cX\times_R K\to X$.
The central fiber $\cX_0$ of the model is defined to be the reduction $\cX\times_R (R/\frakm)$ which is now a $k$-scheme.
If the model is not proper over $R$, the central fiber could be empty.

A model $\cX$ of $X$ has \emph{simple normal crossings}, or \emph{snc} for short, if: $\cX$ is proper and regular over $R$, the reduced central fiber $\cX_{0,red}$ is a union of divisors with simple normal crossings and irreducible (or empty) intersections.

The model $\cX$ is \emph{semistable} if it is proper over $R$, and the central fiber $\cX_0$ is reduced and snc.
Semistable models always exist, after possibly a finite extension $K\subset K'$ of the base field.



\subsection{Berkovich skeletons}
	\label{ssec:berkovich_skeletons}

Although Berkovich spaces have a large number of points, their homotopy type is manageable.
Fix a projective $K$-variety $X$, and assume for simplicity that $K=\bC((t))$.

\subsubsection{Clemens, or dual complex}
	\label{sssec:clemens_or_dual_complex}

The following construction is detailed in \cite[Appendix A]{KontsevichSoibelman_AffineStructures} or \cite[\S2.1]{BoucksomJonsson_LimitsOfVolumes}.

For a simple normal crossings model $\cX$, let $I_\cX$ denote the set of irreducible components of $\cX_{0,red}$ and for $i\in I_{\cX}$ let $D_i\subset \cX_{0,red}$ denote the corresponding divisor.
Assume that $\cX_0 = \sum_i b_i D_i$ with $b_i\in \bZ_{>0}$ giving the multiplicities.

For a subset $J\subset I_\cX$ define $D_J:=\cap_{j\in J} D_j$.
The \emph{Clemens complex}, or \emph{dual complex} $\Delta(\cX)$ is defined to be the simplicial complex determined by the set $I_\cX$ with a simplex $\Delta^J$ for every $J\subset I_\cX$ such that $D_J\neq \emptyset$.
The simplex has the following natural geometric realization:
\[
	\Delta^J := \left\lbrace w\in \bR^J_{>0} \colon \sum_{i\in J} b_i w_i = 1 \right\rbrace
\]

\subsubsection{Embedding the complex into the analytification}
	\label{sssec:embedding_the_complex_into_the_analytification}
One way to build points in $X^{an}$ is by constructing valuations on the function field of $X$ which are compatible with the valuation on $K$, in particular they assign valuation $1$ to the uniformizer $t\in R\subset K$.
Given such a valuation $v'$, the associated norm is $|f|_{v'}:=e^{-v'(f)}$.

Recall that $t=0$ determines $\cX_0\subset \cX$, so the multiplicity $b_i$ of a component $D_i\subset \cX_{0,red}$ is determined from
\[
	b_i = \ord_{D_i}(t)
\]
where $\ord_{D_i}$ is the valuation determined by the divisor $D_i\subset \cX$.
It follows that the valuation $v_i:=\frac{1}{b_i}\ord_{D_i}$ agrees with the valuation on $K$ and determines a point in $X^{an}$, called a \emph{divisorial point}.

In general, to a point $p\in \Delta(\cX)$ there is associated a \emph{monomial valuation} and monomial point as follows (see \cite[Prop. 2.4.4]{MustataNicaise_WeightsSkeleton} for details).
If $p\in \Delta^J\subset \Delta(\cX)$, it determines the subset $D_J=\cap_{j\in J} D_j \subset \cX$ and weights $w_j$ on $D_j$.
Let $x_j$ denote a defining equation of $D_j$ in the local ring of $\cX$ at $D_J$.
A function $f$ on $\cX$, regular near $D_J$, can be expanded in a series
\[
	f = \sum_{\beta \in \bZ^{|J|}_{\geq 0}} c_\beta x^{\beta}
\]
with $c_\beta$ non-vanishing on $D_J$, if non-zero.
Then the valuation associated to $p$ is defined as:
\[
	v_p(f) := \min_{\substack{\beta \in \bZ^{|J|}_{\geq 0}\\ 
					c_\beta\neq 0}}
					\quad 
			\sum_{j\in J} w_j\cdot \beta_j
\]
Note that the valuation is compatible with that on $K$, since for the power-series expansion of $t$, we will have $\beta_j=b_j$ so $v_p(t)=1$.


\subsection{Line bundles on Berkovich spaces}
	\label{ssec:line_bundles_on_berkovich_spaces}

For a more detailed discussion of line bundles and metrics on Berkovich spaces, see \cite[\S1]{ChambertLoir_HeightsMeasuresSurvey}.

\subsubsection{Line bundles and metrics}
	\label{sssec:line_bundles_and_metrics}
One can view a line bundle $L$ over $X^{an}$ as either a locally free rank $1$ sheaf of $\cO_{X^{an}}$-modules, or as its total space $L\to X^{an}$ with linear structure on each fiber.
Over a point $x\in X^{an}$, the fiber is isomorphic to $\bA^1_{\crH(x)}$.

A \emph{metric} on $L$ is a function $e^{-\phi}\colon L\to \bR$ such that $\phi$ is homogeneous of degree $(-1)$ in the sense of \autoref{def:homogeneity_for_potentials} (when restricted to $L^\times$, the line bundle without the zero section).

\subsubsection{Weil metric}
	\label{sssec:weil_metric}
The basic example is that coming from $\cO(-1)\to \bP^1$, which is explicitly $\bA^2\setminus 0 \to \bP^1$.
Taking analytifications, the \emph{Weil metric} is determined by
\begin{align*}
	\phi\colon &(\bA^2\setminus 0)^{an} \to \bR\\
		& (X_0:X_1) \mapsto -\log \max(|X_0|,|X_1|) = \min(-\log|X_0|,-\log|X_1|)
\end{align*}
This induces on the dual line bundle $\cO(-1)$ the Weil metric described in \cite[\S1.3.4]{ChambertLoir_HeightsMeasuresSurvey}.

The expression given above for $\phi$ in terms of $\min$ is made in order to emphasize the analogy with the corresponding tropical constructions.

\begin{remark}[On signs]
	\label{rmk:on_signs}
	There are a number of conventions about signs in the literature, and this paper follows only some of them.
	For line bundles, it is customary to take metrics to be functions on the total space of the form $e^{-\phi}$ so that the first Chern class is $\frac{\sqrt{-1}}{\pi}\del \delbar\phi$.
	This is compatible with the choice of tropicalization as $x\mapsto -\log |x|$ and going from valuations to norms via $|x|=e^{-v(x)}$.
\end{remark}



\subsection{Berkovich spaces and tropicalization}
	\label{ssec:berkovich_spaces_and_tropicalization}		

For a more thorough treatment of the constructions below, one can look at \cite[\S4]{Gubler_FormsCurrents} or \cite[\S3]{Payne_AnalytificationLimitTropicalizations}.

\subsubsection{Basics on tori}
	\label{sssec:basics_on_tori}
Recall that $\bG_m$ denotes the algebraic group of invertible elements in a field, i.e. for a field $\kappa$ one has $\bG_m(\kappa)=\kappa^\times$.
A \emph{torus}\footnote{Split torus, one should say.} $\bT$ is a product of several copies of $\bG_m$, i.e. $\bT:=\bG_m^n$.
One typically denotes by $N:=\Hom(\bG_m,\bT)$ the \emph{co-character lattice} and by $M:=\Hom(\bT,\bG_m)$ the \emph{character lattice}.
The two groups are naturally dual and each is isomorphic to $\bZ^n$, once an isomorphism $\bG_m^n\to \bT$ has been chosen.
When dependence on $\bT$ is important, the lattices will be denoted $M(\bT),N(\bT)$, and their extensions of scalars to a ring $R\supset Z$ by $M_R,N_R$.

\subsubsection{Morphisms, homogeneity, torsors}
	\label{sssec:morphisms_homogeneity}
A group homomorphism between two tori $f:\bT_1\to\bT_2$ is equivalent to the data of the induced map on co-character lattices $N_f:N(\bT_1)\to N(\bT_2)$, as a morphism of $\bZ$-modules.
Equivalently, it is determined by the dual map on character lattices.

Recall now that a torsor $S$ for a group $G$ is an algebraic variety with a $G$-action such that the map $G\times S \to S\times S$ given by $(g,s)\mapsto (g\cdot s,s)$ is an isomorphism.
Informally, a torsor for $G$ is like $G$, but without a choice of origin.

Given two torsors $S_1,S_2$ for the tori $\bT_1,\bT_2$, and morphisms $f:\bT_1\to \bT_2$ and $f_S:S_1\to S_2$ such that $f_S$ is $f$-equivariant, call $f_S$ \emph{homogeneous of degree $N_f$} where $N_f$ is the induced map on co-character lattices of $\bT_i$.

\subsubsection{Torsors and line bundles}
	\label{sssec:torsors_and_line_bundles}
The next discussion works in any category (continuous/algebraic/analytic).
For a line bundle $L\to X$ let $L^\times \to X$ denote the line bundle with the zero section removed.
Then there is a natural action of $\bG_m$ on $L^\times$ by scaling.
For each $x\in X$ the action makes the fiber $L^\times_x$ into a $\bG_m$-torsor.
One can check that equivalently, a family of $\bG_m$-torsors over $X$ is the same as a line bundle.
To see this, trivialize locally the torsors and use the definition of line bundles in terms of gluing maps on overlaps of charts by invertible functions.

More generally, a family of $\bT$-torsors $E\to X$ gives rise to a direct sum of line bundles.
Suppose now that $M\colon \bT\to \bT$ is a map of tori, induced by an endomorphism of the character lattice.
Given two families of $\bT$-torsors $E_i\to X$ with $i=1,2$, a map $F\colon E_1\to E_2$ preserving the fibers will be called \emph{homogeneous of degree $M$} if $F(t\cdot e_1)=M(t)\cdot F(e_1)$ for all $t\in \bT,e_1\in E_1$.

Note that the conditions can be expressed in terms of morphisms (i.e. as a diagram) so it is not necessary to speak of individual elements.
In the examples of interest, when $f\colon X\to X$ will be an automorphism, our torsors will be $E$ and $f^*E$.

\subsubsection{Tropicalizations for subvarieties of tori}
	\label{sssec:tropicalizations_for_subvarieties_of_tori}
For a torus $\bT$, its analytification admits a natural \emph{tropicalization map}
\begin{align*}
	\Trop\colon \bT^{an} & \to N(\bT)_\bR\\
	(x_1,\ldots, x_n) & \mapsto (-\log|x_1|,\ldots,-\log|x_n|)
\end{align*}
which can also be expressed canonically, using the identification $N\cong M^\vee$.

If $X\subset \bT$ is a subvariety, then the composition of $X^{an}\into \bT^{an}\xrightarrow{\Trop} {N_\bR}$ gives its tropicalization.
The image is a balanced (see \autoref{sssec:balancing_and_smoothness}) polyhedral complex.

\subsubsection{Toric varieties and extended tropicalization}
	\label{sssec:toric_varieties_and_extended_tropicalization}
Recall (see e.g. \cite{Fulton_Toric}) that a toric variety $Y$ is a variety equipped with an action of a torus $\bT$ and a dense open orbit isomorphic to $\bT$ inside $Y$.
The toric variety $Y$ is determined by a fan $\sigma$ in the co-character lattice $N(\bT)$; to denote the dependence of the variety on the fan, write $Y_\sigma$ for the toric variety.

Associated to the fan $\sigma$ is a partial compactification $N(\bT)_\sigma\supset N(\bT)$ and there is an extended tropicalization map
\[
	\Trop\colon Y_\sigma \to N(\bT)_\sigma
\]
compatible with the tropicalization of the torus $\bT\subset Y_\sigma$.
For example, the extended tropicalization of $\bP^1$ is $\left\lbrace -\infty \right\rbrace\cup \bR \cup \left\lbrace +\infty \right\rbrace$.

\subsubsection{Analytification as a limit of tropicalizations}
	\label{sssec:analytification_as_a_limit_of_tropicalizations}
If $X$ is quasi-projective, Payne \cite{Payne_AnalytificationLimitTropicalizations} showed that one can recover the Berkovich analytification as a projective (i.e. inverse) limit of tropicalizations.
Specifically, for each embedding into a toric variety $\iota:X\into Y_\sigma$, there is an associated tropical variety $\Trop(X,\iota)\subset N(\bT_\iota)_\sigma$.
Furthermore, for any two such embeddings $\iota_1,\iota_2$, there is a third $\iota_3$ that dominates them, as well as corresponding maps of tori that make the diagram equivariant.
The natural map
\[
	X^{an} \to \varprojlim \Trop(X,\iota)
\]
is then a homeomorphism, where $X^{an}$ is equipped with the topology from \autoref{sssec:topology_and_functions_on_the_berkovich_spectrum} and the right-hand side is given the projective limit topology.

\begin{remark}
	One can speak of tropicalizations without introducing Berkovich analytic spaces.
	Specifically, any $K$-valued point of $X\subset \bT$ has a natural tropicalization in $N(\bT)_\bR$ by taking $(-\log)$ of its coordinates.
	By considering all finite extensions $K'\supset K$ and $K'$-valued points, and taking the closure of the resulting set in $N(\bT)_\bR$ gives the same object as tropicalizing the analytification.
\end{remark}



\subsection{Differential forms on Berkovich spaces}
	\label{ssec:differential_forms_on_berkovich_spaces}

Lagerberg \cite{Lagerberg_Supercurrents} introduced a notion of super-forms on $\bR^n$ as a way to mimic the calculus of $(p,q)$-forms on $\bC^n$.
It was taken by Chamber-Loir \& Ducros \cite{ChamberLoirDucros} as a basis for a formalism of differential forms on Berkovich spaces.
An expository account of some of those results is in the notes of Gubler \cite{Gubler_FormsCurrents}.

\subsubsection{Super-forms on $\bR^n$}
	\label{sssec:super_forms_on_br_n}
Let $x_i$ be coordinates on $\bR^n$.
\emph{Super-forms} are expressions
\[
	\omega = \sum_{{I'},{I''}} \omega_{{I'},{I''}}(x) d'x_{I'}\wedge d''x_{I''}
\]
where $\omega_{{I'},{I''}}(x)$ are smooth functions and $d'x_{I'}=d'x_{i_1}\wedge\cdots\wedge d'x_{i_k}$ (and similarly for $d''x_{I''}$) is a formal expression obeying the rules of the exterior algebra.
In particular ${I'},{I''}\subset \left\lbrace 1\ldots n \right\rbrace$ and consist of distinct elements.
Equipped with a graded-symmetric wedge product, super-forms are a graded algebra; the bidegree of $\omega$ as expressed above is $(|I'|,|{I''}|)$ and its total degree is denoted $\deg \omega:=|I'|+|{I''}|$.

The operator $J$ is defined by $J(dx_i')=dx_i''$ and $J(dx_i'')=dx_i'$ and extended naturally to the algebra; note that $J^2=1$.
The differential operators are defined as derivations of the algebra via:
\begin{align*}
	d'f&:=\sum_i \frac{\partial f}{\partial x_i}dx_i' \quad \text{for functions}\\
	d'(\alpha\wedge\beta)& = (d'\alpha)\wedge \beta + (-1)^{\deg \alpha} \alpha \wedge (d'\beta)\\
	d'' &= J d' J\\
\end{align*}
Super-forms can be considered on open subsets of $\bR^n$, as well as all of $\bR^n$.

Under the ``tropicalization'' map $(\bC^\times)^n\to \bR^n$ defined by $x_i:=-\log |z_i|$, one can think of the identification $d'x_i=-d\log |z_i|$ and $d''x_i = d\Arg(z_i)$.
Note that this identification is not compatible for the types of forms of bidegree $(p,q)$ in complex geometry and in the sense described above.
However, one can identify the space of super-forms on $\bR^n$ with the space of $(\bS^1)^n$-invariant forms on $(\bC^\times)^n$, where $\bS^1\subset \bC^\times$ is the unit circle. 

The formalism of super-forms developed by Lagerberg allows for an integration theory admitting a Stokes formula, change of variables (for affine maps), and definitions of currents (i.e. super-forms with distributional coefficients) by a duality pairing with ordinary smooth super-forms.

\subsubsection{Positivity for super-forms}
	\label{sssec:positivity_for_super_forms}
Just like in complex geometry, there are several notions of positivity for $(p,p)$-super-forms, though they agree for $(1,1)$-forms.
A key fact \cite[Prop. 2.5]{Lagerberg_Supercurrents} is that a function $u$ is convex if and only if $d'd''u\geq 0$ in the sense of currents.
Conversely, for any closed positive $(1,1)$-current $T$ there exists a convex $u$ with $d'd''u=T$ (by \cite[Prop. 2.6]{Lagerberg_Supercurrents}).
In fact, H\"ormander-type $L^2$-estimates are developed in \cite{Lagerberg_L2}.
Furthermore, an intersection theory is also available for positive currents with continuous potentials; in particular, Monge--Amp\`ere operators of convex functions are well-defined.

It is useful to remark that tropical cycles in $\bR^n$ can be defined using the formalism of super-forms.
Namely, if $f$ is a PL function given as a minimization of finitely many affine functions $f:=\min_\alpha \xi_{\alpha}$, then $-d'd''f$ is the cycle of integration along the break locus of $f$.

\subsubsection{Differential forms on Berkovich spaces}
	\label{sssec:differential_forms_on_berkovich_spaces}
As discussed in \autoref{sssec:analytification_as_a_limit_of_tropicalizations}, the Berkovich space $X^{an}$ can be viewed as a projective limit of tropicalizations.
Using such maps $X^{an}\supset U \to \bR^n$ called charts, Chambert-Loir--Ducros \cite{ChamberLoirDucros} define differential forms on $X^{an}$ using an injective (i.e. direct) limit construction.
Note that currents on Berkovich spaces are therefore defined as projective (i.e. inverse) limits.
In particular, a current on $X^{an}$ gives one on any (extended) tropicalization.



\subsection{The currents in Berkovich dynamics}
	\label{ssec:the_currents_in_berkovich_dynamics}

Throughout, all analytifications are in the sense of Berkovich.

\subsubsection{Setup}
	\label{sssec:setup}
Suppose that $X$ is a projective K3 surface and $f\colon X\to X$ an automorphism, all defined over $K$.
Let $E\to X$ be a $\bT:=\bG^n_m$-bundle to which $f$ lifts as
\[
	\begin{tikzcd}
	E \arrow{r}{F}\arrow{d} & E\arrow{d}\\
	X \arrow{r}{f}          & X
	\end{tikzcd}
\]
and such that the homogeneity of $F$ (see \autoref{sssec:torsors_and_line_bundles}) is given by a morphism $F_\bT\colon\bT\to \bT$, i.e.
\[
	F(t\cdot p) = F_\bT(t)\cdot F(p)\quad \forall t\in \bT, \forall p\in E
\]
Recall that the torus morphism $F_\bT:\bT\to \bT$ is the same as a map $N_F\colon N(\bT)\to N(\bT)$ on the co-character lattice, or a dual morphism $M_F\colon M(\bT)\to M(\bT)$ on the character lattice.

\begin{definition}[Homogeneity for potentials]
	\label{def:homogeneity_for_potentials}
	Suppose that $E\to X$ is a torsor over $X$ for the torus $\bT$.
	A function $G\colon E^{an}\to \bR$ is called \emph{homogeneous of degree $\alpha\in M(\bT)$}, or \emph{$\alpha$-homogeneous} if for any $x\in X^{an}$ the map on the fiber:
	\[
		G\colon E^{an}_x \to \bR
	\]
	satisfies $G(t\cdot p)= G(p) + \log |\alpha(t)|$, where $t\in \bT(\crH(x))$ and $\alpha:\bT\to \bG_m$, and the norm of $\alpha(t)$ is coming from $\crH(x)$.

	We will abuse notation and allow $\alpha\in M(\bT)_\bR$ and not just $M(\bT)_\bZ$.
	To make sense of $\log|\alpha(t)|$, write $\alpha=\sum c_i \alpha_i$ with $\alpha_i\in M(\bT)_\bZ$ and $c_i\in \bR$ (where we used additive notation for characters).
	Then $\log|\alpha(t)|:= \sum_i c_i \log |\alpha_i(t)|$ where now $\alpha_i(t)\in \crH(x)$.
\end{definition}

\subsubsection{Behavior under pullback}
	\label{sssec:behavior_under_pullback}
Suppose that $G\colon E^{an}\to \bR$ is $\alpha$-homogeneous, and $F\colon E\to E$ is homogeneous for the morphism $M_F\colon M(\bT)\to M(\bT)$ on the character lattice and $F_\bT\colon \bT\to \bT$ on the torus.
Then the pullback $F^*G$ is $M_F(\alpha)$-homogeneous.
Indeed:
\begin{align*}
	F^*G(t\cdot p ) &= G(F(t\cdot p)) = G(F_\bT(t)\cdot F(p)) \\
	& = G(F(p)) + \log |\alpha(F_\bT(t))|\\
	& = F^*G(p) + \log |M_F(\alpha)(t)|
\end{align*}
by the definition of $M_F$.

\begin{theorem}[Existence of analytic potential]
	\label{thm:existence_of_analytic_potential}
	With the setup as in \autoref{sssec:setup}, suppose that $M_F$ acting on $M(\bT)_\bR$ has an eigenvector $v_F$ with eigenvalue $\lambda_F>1$.
	Then there exists a unique
	\[
		G\colon E^{an} \to \bR
	\]
	which is $\alpha_F$-homogeneous and satisfies $F^*G=\lambda_F \cdot G$.
\end{theorem}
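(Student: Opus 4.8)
The strategy is to mirror exactly the argument of \autoref{thm:existence_of_a_potential}, replacing the compact base $\Sk(X)$ and the finite-dimensional affine space of continuous homogeneous functions by the Berkovich analytification $X^{an}$ and the corresponding space of continuous $\alpha_F$-homogeneous functions on $E^{an}$. Concretely, I would first set $\alpha_F := v_F$, the eigenvector of $M_F$ with eigenvalue $\lambda_F>1$, and introduce the space $\cC(E^{an},\alpha_F)$ of continuous functions $G\colon E^{an}\to\bR$ that are $\alpha_F$-homogeneous in the sense of \autoref{def:homogeneity_for_potentials}. By the computation in \autoref{sssec:behavior_under_pullback}, the pullback $F^*$ sends $\alpha_F$-homogeneous functions to $M_F(\alpha_F)$-homogeneous ones, and since $M_F(\alpha_F)=\lambda_F\,\alpha_F$, the operator $\tfrac{1}{\lambda_F}F^*$ preserves $\cC(E^{an},\alpha_F)$.

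Next I would verify that $\cC(E^{an},\alpha_F)$ is a complete metric space under the sup-distance
\[
	\dist(G_1,G_2):=\sup_{e\in E^{an}} |G_1(e)-G_2(e)|.
\]
The key point, just as in \autoref{thm:existence_of_a_potential}, is that the difference $G_1-G_2$ of two $\alpha_F$-homogeneous functions is constant along each torus fiber $E^{an}_x$: indeed, for $t\in\bT(\crH(x))$ both $G_i(t\cdot p)-G_i(p)$ equal the same quantity $\log|\alpha_F(t)|$, so the difference descends to a continuous function on the base $X^{an}$. Since $X^{an}$ is compact (as $X$ is projective), the supremum is finite and $\cC(E^{an},\alpha_F)$ becomes an affine space over the Banach space $\cC(X^{an})$ of continuous functions on $X^{an}$, hence complete. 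Nonemptiness follows from choosing a local trivialization together with a partition of unity on $X^{an}$, exactly as in the footnote to the earlier theorem.

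With completeness in hand, the operator $\tfrac{1}{\lambda_F}F^*$ contracts distances by the factor $\lambda_F>1$, since $F^*$ is an isometry for the sup-norm (it is pullback by a homeomorphism $F^{an}$ of $E^{an}$) and the homogeneity bookkeeping keeps us inside the affine space. The contraction mapping principle then produces a unique fixed point $G$, which by construction satisfies $F^*G=\lambda_F\,G$ and is $\alpha_F$-homogeneous.

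The main obstacle, and the essential difference from \autoref{thm:existence_of_a_potential}, is establishing the two topological facts for the Berkovich setting: that $X^{an}$ is compact and that the fiber-homogeneity argument goes through verbatim when the torus $\bT$ acts through $\crH(x)$-points rather than over a single field. Compactness of $X^{an}$ is standard for projective $X$ (Berkovich's theorem), so the real care is in checking that the difference of two homogeneous potentials is genuinely fiberwise constant across all of $E^{an}$, including non-divisorial and non-rigid points, so that it descends to a \emph{continuous} function on $X^{an}$ and the sup-metric is well-defined and complete. Once these are confirmed, the analytic argument is a formal copy of the tropical one.
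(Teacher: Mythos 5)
Your proof is correct and takes essentially the same approach as the paper: the paper's own proof likewise applies the contraction mapping principle to $\tfrac{1}{\lambda_F}F^*$ acting on the space $\cC(E^{an},v_F)$ of continuous $v_F$-homogeneous functions, regarded as a complete affine space over $\cC(X^{an})$. The details you supply (compactness of $X^{an}$ for projective $X$, fiberwise constancy of differences of homogeneous functions, and the isometry property of $F^*$ making $\tfrac{1}{\lambda_F}F^*$ a strict contraction) are precisely what the paper leaves implicit in its one-line reference to \autoref{thm:existence_of_a_potential}, and your reading of $\alpha_F$ as the eigenvector $v_F$ matches the intended statement.
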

\begin{proof}
	The proof is analogous to \autoref{thm:existence_of_a_potential}.
	Consider $\cC(E^{an},v_F)$, the space of continuous functions on $E^{an}$ which are $v_F$-homogeneous, equipped with the distance
	\[
		\dist(G_1,G_2) := \sup_{e\in E^{an}} |G_1(e)-G_2(e)|
	\]
	which makes into a complete metric space, in fact an affine space over $\cC(X^{an})$, the space of continuous functions on $X^{an}$.
	The strict contraction $\frac{1}{\lambda_F}F^*$ has a unique fixed point, which is the desired $G$.
\end{proof}
To obtain positivity properties of the current defined by the potential $G$, we need a strengthening of the above construction.

\begin{theorem}[Convergence to the potential]
	\label{thm:convergence_to_the_potential}
	Suppose that $P\colon E^{an}\to \bR$ is an $\alpha_P$-homogeneous function.
	Assume that the action of $M_F$ on $M(\bT)_\bR$ has an eigenvector $v_F$ with eigenvalue $\lambda_F>1$ and such that $\frac{1}{\lambda_F^n}M_F^n(\alpha_P)\to v_F$ exponentially fast as $n\to \infty$.

	Then the sequence of functions $\frac{1}{\lambda_F^n}(F^*)^nP$ converges, uniformly on compact sets of $E^{an}$, to the function $G$ constructed in \autoref{thm:existence_of_analytic_potential}.
\end{theorem}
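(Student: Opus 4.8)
The plan is to convert the statement into a decay estimate for a single auxiliary function, and then to read off the decay from the dynamics along the forward orbit. Write $T:=\tfrac1{\lambda_F}F^*$, so that $\tfrac1{\lambda_F^n}(F^*)^nP=T^nP$, and recall from \autoref{thm:existence_of_analytic_potential} that $TG=G$. Since $F^*$, hence $T$, is linear, putting $R:=P-G$ gives $T^nP-G=T^n(P-G)=T^nR$, so it suffices to show that $T^nR\to0$ uniformly on compact subsets of $E^{an}$. A direct check with the conventions of \autoref{def:homogeneity_for_potentials} shows that $R$ is homogeneous of degree $\gamma:=\alpha_P-v_F$ (the difference of two homogeneous functions is homogeneous of the difference of the degrees, because $\log|\delta(t)|$ is additive in the character $\delta$), and, using \autoref{sssec:behavior_under_pullback}, that $T^nR$ is homogeneous of degree $\beta_n:=\tfrac1{\lambda_F^n}M_F^n(\gamma)=\tfrac1{\lambda_F^n}M_F^n(\alpha_P)-v_F$, which by hypothesis tends to $0$ exponentially fast.

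Next I would evaluate $T^nR$ along the dynamics: by definition $T^nR(e)=\lambda_F^{-n}R(F^ne)$, so the problem becomes one of controlling the growth of the $\gamma$-homogeneous function $R$ along the forward orbit $F^n(K)$ of a compact set $K\subset E^{an}$. Fix (local) trivializing sections of the torsor $E\to X$, write $\pi\colon E^{an}\to X^{an}$ for the projection, express each $e\in K$ as $e=t_e\cdot s(\pi e)$ with $\Trop(t_e)$ ranging over a bounded subset of $N(\bT)_\bR$, and record the dynamical cocycle $c_n$ defined by $F^n(s(x))=c_n(x)\cdot s(f^nx)$. Combining $F^n(t\cdot p)=F_\bT^n(t)\cdot F^n(p)$ with the $\gamma$-homogeneity of $R$ yields the splitting
\[
\tfrac1{\lambda_F^n}R(F^ne)=\tfrac1{\lambda_F^n}R\big(s(f^n\pi e)\big)+\log\big|\beta_n(t_e)\big|+\tfrac1{\lambda_F^n}\log\big|\gamma(c_n(\pi e))\big|
\]
into a base term, a fiber term, and a cocycle term. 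The base term is $O(\lambda_F^{-n})$ since $R$ is bounded along the section over the compact space $X^{an}$; the fiber term satisfies $\big|\log|\beta_n(t_e)|\big|\le\big(\sup_{e\in K}\norm{\Trop(t_e)}\big)\norm{\beta_n}\to0$ because $K$ is compact and $\beta_n\to0$.

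The heart of the matter, and the step I expect to be the main obstacle, is the cocycle term, since the forward orbit $F^n(K)$ expands in the fiber directions at rate $\approx\lambda_F^n$, so a naive homogeneity bound would only give a term of size $O(1)$. Unwinding the cocycle relation $c_{n+1}(x)=F_\bT(c_n(x))\cdot c_1(f^nx)$ expresses $\log|\gamma(c_n(x))|$ as a sum over $k=0,\dots,n-1$ of pairings of $\Trop(c_1(f^{n-1-k}x))\in N(\bT)_\bR$ with $M_F^k\gamma\in M(\bT)_\bR$, whence
\[
\big|\log|\gamma(c_n(x))|\big|\le C\sum_{k=0}^{n-1}\norm{M_F^k\gamma},\qquad C:=\sup_{y\in X^{an}}\norm{\Trop(c_1(y))}<\infty,
\]
the finiteness of $C$ coming from continuity of $c_1$ and compactness of $X^{an}$. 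Since $M_F^k\gamma=\lambda_F^k\beta_k$, the cocycle term is dominated by $C\sum_{k=0}^{n-1}\lambda_F^{k-n}\norm{\beta_k}$, a geometrically weighted sum of the $\norm{\beta_k}$ concentrated at the largest indices. This is exactly where the exponential-convergence hypothesis enters: writing $\norm{\beta_k}\le C'\mu^k$ with $\mu<1$, the bound becomes $CC'\lambda_F^{-n}\sum_{k=0}^{n-1}(\lambda_F\mu)^k$, which tends to $0$ as $n\to\infty$ (the cases $\lambda_F\mu\neq1$ and $\lambda_F\mu=1$ being elementary). Conceptually, the point is that $\gamma=\alpha_P-v_F$ carries no component along the expanding eigendirection $v_F$, so although the orbit expands at rate $\lambda_F^n$ the function $R$ does not detect the expanding direction and the $\lambda_F^{-n}$ prefactor wins. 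Assembling the three bounds uniformly over $e\in K$ gives $\sup_{K}|T^nR|\to0$, which is the asserted uniform convergence on compacts.
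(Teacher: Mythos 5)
Your proposal is correct, but it is organized differently from the paper's proof, and the difference is worth spelling out. The paper never compares $P_n:=\tfrac{1}{\lambda_F^n}(F^*)^nP$ with the limit $G$ directly; instead it compares \emph{consecutive} terms. Using the one-step relation $F_*s_i(x)=t_{i,j}(f(x))\,s_j(f(x))$ between local sections, it derives
\[
P_{n+1}(s_i(x))=\frac{1}{\lambda_F}\Big(P_n(s_j(f(x)))+\log|\alpha_{P_n}(t_{i,j}(f(x)))|\Big),
\]
and then shows that $d_n:=\sup_{i,x}|P_n(s_i(x))-P_{n-1}(s_i(x))|$ obeys $d_{n+1}\leq \tfrac{1}{\lambda_F}d_n+\tfrac{C_1}{\lambda_F\delta^n}$, so $\sum d_n<\infty$, the sequence is uniformly Cauchy, and the limit---being a fixed point of $\tfrac{1}{\lambda_F}F^*$---must equal $G$ by the \emph{uniqueness} part of \autoref{thm:existence_of_analytic_potential}. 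Your argument instead sets $R:=P-G$, exploits that $R$ is $(\alpha_P-v_F)$-homogeneous, and estimates $T^nR$ directly by unwinding the full $n$-step cocycle $c_n(x)=\prod_{k}F_\bT^k(c_1(f^{n-1-k}x))$, which leads to the weighted geometric sum $\lambda_F^{-n}\sum_{k<n}(\lambda_F\mu)^k$; this competition between fiberwise expansion and the decay of $\beta_k$ is exactly what the paper's telescoping recursion absorbs invisibly into the $d_n$ estimate, so it never has to confront an $n$-step product at all. What your route buys: an explicit convergence rate (roughly $\max(\mu,\lambda_F^{-1})^n$, up to the factor $n$ when $\lambda_F\mu=1$), no appeal to the uniqueness statement, and a transparent conceptual explanation (the error $R$ has no component along the expanding character direction $v_F$). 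What the paper's route buys: brevity and less bookkeeping---in particular it only ever needs the transition maps $t_{i,j}$ for a single step, whereas your unwinding implicitly requires tracking a chart index along the whole orbit $x,f(x),\dots,f^{n}(x)$; since torsors over a projective $X$ have no global sections, your single global $s$ and cocycle $c_n$ should really carry chart indices as in the paper, with all constants uniform over the finite cover (after shrinking to precompact charts). That is a presentational fix, not a mathematical gap, so your proof stands.
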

\begin{proof}
	As a preliminary, fix a finite cover $X=\cup U_i$ by open sets and $s_i:U_i \to E$ sections.
	By abuse of notation, we will push forward open sets under the action of $f$ and sections under $F$.

	For each pair $i,j$, the sections are related under the dynamics by
	\[
		F_*s_i(x) = t_{i,j}(f(x)) s_j(f(x)) \quad \forall x\in f(U_i)\cap U_j
	\]
	for maps $t_{i,j}:f(U_i)\cap U_j \to \bT$.
	Note that the above identity can be expressed algebraically, without taking pointwise values, so in particular it makes sense after analytification.

	Set $P_n:=\frac{1}{\lambda_F^n}(F^*)^nP$ and let us now express its behavior under $\frac{1}{\lambda_F}F^*$:
	\begin{align}
		\label{eqn:P_n_recursion}
	\begin{split}
		P_{n+1}(s_i(x)) & =\frac{1}{\lambda_F} F^*P_n(s_i(x)) =\frac{1}{\lambda_F} P_n(F(s_i(x)))\\
		& = \frac{1}{\lambda_F} P_n(t_{i,j}(f(x))\cdot s_j(f(x)) )\\
		& = \frac{1}{\lambda_F}\Big( P_n(s_j(f(x))) + \log|\alpha_{P_n}(t_{i,j}(f(x)))| \Big)
	\end{split}
	\end{align}
	where $\alpha_{P_n}=\frac{1}{\lambda_F^n}M_F^n(\alpha_P)$ denotes the homogeneity of $P_n$.
	By shrinking the charts if necessary to precompact $U_i'\subset U_i$ (in the analytic topology) assume that the $t_{i,j}(x)$ vary in a bounded set in $\bT$.
	Now for the homogeneities, we can take $n_0$ such that $\forall n\geq n_0$ we have:
	\[
		-m \leq \log|\alpha_{P_n}(t_{i,j}(f(x)))| \leq m
	\]
	for some uniform $m>0$.
	Moreover, by the exponentially fast convergence of $\alpha_{P_n}$ to $v_F$, and the boundedness of $t_{i,j}(x)$, we can assume that there exists $C_1>0, \delta>1$ such that
	\[
		\Big|\log|\alpha_{P_n}(t_{i,j}(f(x)))| -  \log|\alpha_{P_{n-1}}(t_{i,j}(f(x)))|\Big| \leq \frac{C_1}{\delta^n}
	\]
	for all $x,i,j$.
	Defining now $d_n:=\sup_{i,x}|P_n(s_i(x))-P_{n-1}(s_i(x))|$ and using \autoref{eqn:P_n_recursion} gives:
	\begin{align*}
		d_{n+1} & \leq \frac{1}{\lambda_F}d_n + \frac{1}{\lambda_F}\cdot \frac{C_1}{\delta^n}
	\end{align*}
	and since $\lambda_F>1$, $\delta>1$, it is clear than $\sum_{i\geq 0} d_i$ converges absolutely.
	Therefore $P_n(s_i(x))$ converges uniformly, and so $P_n$ converges uniformly on compact sets to a function $G'$ whose homogeneity is clearly $v_F$.
	Since $\frac{1}{\lambda_F}F^*P_n = P_{n+1}$, it follows that $\frac{1}{\lambda_F}F^*G'=G'$ and by the uniqueness part of \autoref{thm:existence_of_analytic_potential}, it follows that $G'=G$.
\end{proof}

\begin{theorem}[Positivity of the current]
	\label{thm:positivity_of_the_current}
	Assume the K3 surface automorphism $f \colon X\to X$ is projective and the action of $f^*$ on $\Pic(X)$ is hyperbolic, i.e, there exists a unique up to scale $v\in \Pic(X)\otimes_\bZ\bR$ which is an eigenvector with eigenvalue $\lambda>1$.

	Then for the space $E$ from \autoref{sssec:a_general_construction}, there exists a choice of homogeneity $v$, or perhaps $-v$, with the following properties
	\begin{enumerate}
		\item The $v$-homogeneous function
		\[
			G\colon E^{an}\to \bR
		\]
		constructed in \autoref{thm:existence_of_analytic_potential} satisfies $d'd''G\geq 0$.
		\item There exists a closed positive current $\eta_v$ on $X^{an}$, obtained by pulling back $d'd''G$ from $E^{an}$ along local sections, such that $f^{*}\eta_v = \lambda \eta_v$.
	\end{enumerate}
\end{theorem}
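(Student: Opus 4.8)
The plan is to realize $G$ as a limit of manifestly positive potentials and to transport positivity through the limit using the functoriality of the Chambert-Loir--Ducros operators, following the strategy of Cantat in the complex-analytic setting. First I would produce a positive seed: choose an ample line bundle on $X$ and let $P\colon E^{an}\to\bR$ be an associated Weil-type potential (as in \autoref{sssec:weil_metric}), which is $\alpha_P$-homogeneous for an ample class $\alpha_P$ and satisfies $d'd''P\geq 0$. Since $f^*$ acts hyperbolically on $\Pic(X)\otimes_\bZ\bR$ and preserves the intersection form, $\lambda$ is the strictly dominant eigenvalue of $M_F$ (the remaining eigenvalues being $\lambda^{-1}$ and unimodular), so $\frac{1}{\lambda^n}M_F^n$ converges exponentially to the rank-one projection onto the $v$-eigenspace. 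Pairing $\alpha_P$ against the dual eigenvector gives $\frac{1}{\lambda^n}M_F^n(\alpha_P)\to c_P\cdot v$; because $v$ is isotropic and nef it pairs positively with the ample class $\alpha_P$, so after possibly replacing $v$ by $-v$ one may assume $c_P>0$ and rescale to $c_P=1$. This is exactly the hypothesis of \autoref{thm:convergence_to_the_potential}, which then gives $\frac{1}{\lambda^n}(F^*)^nP \to G$ uniformly on compact subsets of $E^{an}$, where $G$ is the potential of \autoref{thm:existence_of_analytic_potential}.

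Next I would propagate positivity. The operators $d',d''$ commute with pullback by morphisms, and pullback of a positive $(1,1)$-current by a morphism is again positive (at the level of forms it is the conjugation $J^{\mathrm t}\omega J$ of a positive-semidefinite matrix, so the signs occurring in $M_F$ are irrelevant). Hence $d'd''\bigl(\tfrac{1}{\lambda}F^*Q\bigr)=\tfrac{1}{\lambda}F^*(d'd''Q)\geq 0$ whenever $d'd''Q\geq 0$, and by induction $d'd''\bigl(\frac{1}{\lambda^n}(F^*)^nP\bigr)\geq 0$ for all $n$. Since uniform convergence on compacts implies convergence of currents and $d'd''$ is continuous for the weak topology, passing to the limit yields $d'd''G\geq 0$, which is item (1). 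This is the step where the Berkovich/Lagerberg formalism is essential: the naive assertion that $\frac{1}{\lambda}F^*$ preserves \emph{convexity of functions} can fail, since the involutive lift $\Sigma_X$ composes $G$ with concave (min-type) expressions, whereas preservation of \emph{positivity of the associated current} holds unconditionally.

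Finally I would descend to $X^{an}$. Fix local sections $s_i\colon U_i\to E$ and set $\eta_i:=s_i^*(d'd''G)$, a positive current on $U_i$. On overlaps $s_i=t_{ij}\cdot s_j$ with $t_{ij}$ valued in $\bT$, so $v$-homogeneity gives $G\circ s_i=G\circ s_j+\log|v(t_{ij})|$; each $\log|v(t_{ij})|$ is harmonic in the Lagerberg sense, i.e. $d'd''\log|v(t_{ij})|=0$, as $v(t_{ij})$ is invertible. Thus $\eta_i=\eta_j$ on $U_i\cap U_j$ and the $\eta_i$ glue to a global positive current $\eta_v$, closed because $(d')^2=(d'')^2=0$. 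For the scaling, $F^*G=\lambda G$ gives $F^*(d'd''G)=\lambda\,d'd''G$; intertwining the sections through $F\circ s_i$ and $s_j\circ f$ (the extra transition terms again being annihilated by $d'd''$) yields $f^*\eta_v=\lambda\eta_v$, which is item (2).

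The hard part will be the second step together with the sign bookkeeping of the first: one must ensure the seed $P$ is positive with homogeneity converging to the correct multiple of $v$ (with the correct orientation, hence the hedge ``$v$, or perhaps $-v$''), and then invoke the precise functoriality and positivity-of-pullback statements in the Chambert-Loir--Ducros theory so that $d'd''\geq 0$ survives the limit. Once these inputs are granted, the contraction/convergence and the gluing are formal.
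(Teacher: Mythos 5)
Your proposal is correct and follows essentially the same route as the paper: seed with the Weil-metric potential of an ample line bundle, arrange its homogeneity to converge to $v$ (or $-v$) so that \autoref{thm:convergence_to_the_potential} applies, transport $d'd''\geq 0$ through pullbacks and the uniform limit, and descend to $X^{an}$ via local sections whose transition terms $\log|\cdot|$ of invertible functions are killed by $d'd''$. The only difference is cosmetic: where you give the spectral/intersection-form argument (dominant eigenvalue, pairing of the ample class with the dual isotropic nef eigenvector) to show the homogeneity converges to a positive multiple of $v$, the paper simply invokes openness of the ample cone and a generic choice of $L_a$ — your version is a slightly more detailed justification of the same step.
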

\begin{proof}
	By the projectivity assumption of $f$ and $X$, there exists at least one very ample line bundle $L_a$ on $X$.
	Now $L_a$ carries a positive metric using the construction from \autoref{sssec:weil_metric}, written in the form $e^{-P_a}$ for a function $P_a\colon (L_a^\times)^{an}\to \bR$ satisfying $d'd''P_a\geq 0$.
	Express $L_a$ in the given basis of $\Pic(X)$ to obtain a map $E\to L_a^\times$ which is homogeneous of degree depending on the coordinates of $L_a$.
	Let $P$ be the pullback of $P_a$ to $E$, which still satisfies $d'd''P\geq 0$.

	There exists a positive rescaling of $P$ which ensures that its homogeneity $\alpha_P$ satisfies $\frac{1}{\lambda^n}M_F(\alpha_P)\to v$, or possibly $-v$; note that the ample cone is open in the real Picard group, so a generic choice of $L_a$ will have this property.
	\autoref{thm:convergence_to_the_potential} implies that $G$ is a uniform limit of functions $P_n$ satisfying $d'd''P_n\geq 0$, so it itself satisfies the same property.

	For part (ii), note that fixing local algebraic sections $s_i\colon X\to E$ on open sets $U_i$ allows us to define $\eta_v:=d'd''(s_i^*G)$.
	Let us check that $\eta_v$ is independent of the choice of $s_i$.
	Another section $s_i'$ will differ from $s_i$ by a map $r\colon U_i\to \bT$, where $E$ is trivialized on $U_i$ as $U_i\times \bT$ for a torus $\bT$.
	Then $d'd''(s_i^*G)$ and $d'd''((s_i')^*G)$ will differ by linear combinations of expressions of the form $d'd'' (a_j\cdot \log |r_j|)$ where $r_j$ are nowhere vanishing algebraic functions in $U_i$ (depending on $r$) and $a_j$ are real numbers depending on the homogeneity of $G$.
	Therefore the expressions $d'd''(a_j \cdot \log |r_j|)$ vanish (see e.g. \cite[4.6.5]{ChamberLoirDucros}), showing independence of the local sections.

	Finally the equation $f^*\eta_v=\lambda \eta_v$ follows from the same property for $G$.
\end{proof}

\begin{corollary}[Invariant measures]
	\label{cor:invariant_measures}
	With the setup as in \autoref{thm:positivity_of_the_current}, let $\eta_+$ be the current associated to $f$, and $\eta_-$ the one associated to $f^{-1}$.
	Then $\mu^{an}:=\eta_+\wedge\eta_-$ is an $f$-invariant positive measure on $X^{an}$.

	Furthermore, the measure is non-zero, and the currents satisfy $\eta_+^2=0=\eta_-^2$.
\end{corollary}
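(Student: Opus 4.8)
The plan is to deduce all four assertions from the scaling relations $f^{*}\eta_+ = \lambda\,\eta_+$ and $f^{*}\eta_- = \lambda^{-1}\eta_-$, together with the hyperbolic geometry of the intersection form on $\Pic(X)$. The second relation follows from \autoref{thm:positivity_of_the_current} applied to $f^{-1}$: the current $\eta_-$ attached to $f^{-1}$ satisfies $(f^{-1})^{*}\eta_- = \lambda\,\eta_-$, hence $f^{*}\eta_- = \lambda^{-1}\eta_-$. By \autoref{thm:existence_of_analytic_potential} the local potentials of $\eta_\pm$ are continuous, so the wedge products $\eta_+\wedge\eta_-$ and $\eta_\pm\wedge\eta_\pm$ are well-defined positive measures via the intersection theory for positive currents with continuous potentials recalled in \autoref{ssec:differential_forms_on_berkovich_spaces}.

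Invariance is then immediate. Since pullback by the automorphism $f$ commutes with the wedge product of currents with continuous potentials,
\[
	f^{*}\mu^{an} = (f^{*}\eta_+)\wedge(f^{*}\eta_-) = (\lambda\,\eta_+)\wedge(\lambda^{-1}\eta_-) = \eta_+\wedge\eta_- = \mu^{an},
\]
and positivity of $\mu^{an}$ is part of the same intersection theory. This establishes the first assertion.

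For the vanishing $\eta_+^{2}=0$ I would argue by total mass. Functoriality of the product gives $f^{*}(\eta_+\wedge\eta_+) = \lambda^{2}\,\eta_+\wedge\eta_+$. On the other hand $f$ is a homeomorphism of $X^{an}$, so pullback preserves the total mass of any measure; comparing the two computations yields $\norm{\eta_+^{2}} = \lambda^{2}\,\norm{\eta_+^{2}}$. As $\lambda^{2}>1$ this forces $\norm{\eta_+^{2}}=0$, and a positive measure of zero mass vanishes. The same argument applied to $f^{-1}$, now with factor $\lambda^{-2}<1$, gives $\eta_-^{2}=0$.

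The main obstacle is the non-vanishing of $\mu^{an}$, for which the scaling argument is uninformative since the two factors cancel. Here I would invoke the non-archimedean mass formula of Chambert-Loir--Ducros, identifying the total mass $\norm{\eta_+\wedge\eta_-}$ with the cohomological intersection pairing $v_+\cdot v_-$ of the classes $v_\pm\in\Pic(X)\otimes_\bZ\bR$ represented by $\eta_\pm$. These classes are the $\lambda^{\pm1}$-eigenvectors of $f^{*}$, and since $f^{*}$ preserves the intersection form one has $v_+\cdot v_+ = \lambda^{2}(v_+\cdot v_+)$, forcing $v_+^{2}=0$ and likewise $v_-^{2}=0$. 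The intersection form on $\Pic(X)$ has Lorentzian signature $(1,\rho-1)$, which admits no totally isotropic $2$-plane; as $v_+$ and $v_-$ span distinct isotropic lines, $v_+\cdot v_-\neq0$, and after the sign normalization in \autoref{thm:positivity_of_the_current} this pairing is positive. Hence $\mu^{an}$ has strictly positive mass and is nonzero. The delicate point throughout is to ensure that the chosen potentials are regular enough (continuous and semipositive) for both the product measures and the mass formula to apply.
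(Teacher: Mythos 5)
Your proof is correct and follows essentially the paper's strategy: invariance from the scaling relations, and both mass computations reduced to intersection numbers of the eigenclasses $v_\pm\in\Pic(X)\otimes_\bZ\bR$ via the Chambert-Loir--Ducros formula, with the Lorentzian geometry of the intersection form (isotropy of $v_\pm$, no totally isotropic $2$-plane) forcing $v_+\cdot v_->0$ and hence $\mu^{an}\neq 0$. The one place you genuinely diverge is the vanishing $\eta_\pm^2=0$: you obtain it purely dynamically, from functoriality of the product together with the fact that a homeomorphism preserves the total mass of a finite Radon measure, so $\norm{\eta_+^2}=\lambda^2\norm{\eta_+^2}$ forces $\eta_+^2=0$; the paper instead represents $\eta_\pm=\lim_{n\to\infty}\lambda^{-n}(f^{\pm n})^*c_1(L_a)$ and derives all three masses at once in cohomology, getting $[\eta_\pm]^2=0$ from the same eigenvalue scaling performed in the Picard group. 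Your version of that step is more elementary (it needs nothing beyond $\eta_+^2$ being a finite positive measure on the compact space $X^{an}$). On the other hand, the paper's limit representation is precisely what justifies the assertion you invoke in the non-vanishing step — that $\norm{\eta_+\wedge\eta_-}$ equals $v_+\cdot v_-$ — since the mass formula of \cite[6.4.3]{ChamberLoirDucros} is stated for metrized line bundles and must be passed to the limit, using the uniform convergence of potentials from \autoref{thm:convergence_to_the_potential}, before it applies to $\eta_\pm$ themselves; you should make that approximation explicit rather than citing the formula directly for the limiting currents. Also, the positivity (rather than mere non-vanishing) of $v_+\cdot v_-$ is best justified by noting that, with the sign normalization of \autoref{thm:positivity_of_the_current}, both $v_\pm$ are limits of nef classes and so pair non-negatively, rather than by the sign choice alone.
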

\begin{proof}
	The invariance of the measure follows because $f^*\eta_\pm = \lambda^{\pm 1}\eta_\pm$.
	For the second part, on intersections, recall that the currents $\eta_\pm$ can be written as $\eta_\pm = \lim_{n\to +\infty} \frac{1}{\lambda^n}\left( f^{\pm n} \right)^*c_1(L_a)$ where $c_1(L_a)$ is a $(1,1)$-form representing the first Chern class of an ample line bundle $L_a$.
	By \cite[6.4.3]{ChamberLoirDucros}, the integrals can be computed in cohomology:
	\[
		\int_{X^{an}}c_1(L_1)\wedge c_1(L_2) = \left( [c_1(L_1)]\cdot [c_1(L_2)] \right) \cap [X]
	\]
	for any two metrized line bundles, where $[c_1(L_i)]$ denotes the first Chern class of $L_i$ in the Chow ring (or Picard group in this case).

	A direct computation in the Picard group gives that 
	\begin{align*}
		\lim_{n\to +\infty} \frac{1}{\lambda^{2n}} \left( [c_1 \big(\left(f^{n}\right)^*L_a\big)] \cdot [c_1 \big(\left(f^{-n}\right)^*L_a\big)] \right) \cap [X] > 0\\	
		\lim_{n\to +\infty} \frac{1}{\lambda^{2n}} \left( [c_1 \big(\left(f^{n}\right)^*L_a\big)] \cdot [c_1 \big(\left(f^{n}\right)^*L_a\big)] \right) \cap [X] = 0\\
	\end{align*}
	Indeed, this follows since the corresponding classes $[\eta_\pm]$ in the Picard group, which are scaled by $f$ by $\lambda^{\pm 1}$, satisfy the same relations: $ [\eta_+]\cdot [\eta_-]>0$ and $[\eta_+]^2 = 0 = [\eta_-]^2$.

	Because $\eta_+\wedge \eta_+$ is a positive measure that integrates to zero, it follows that it is zero, and similarly for $\eta_-$.	
\end{proof}



\section{Examples of PL maps on tropical K3 surfaces}
	\label{sec:examples_of_pl_maps_on_tropical_k3_surfaces}

This section describes two classes of tropical K3 automorphisms.
The first class, discussed in \autoref{ssec:kummer_examples}, is concerned with the uniformly hyperbolic case coming from the Kummer construction.
At the opposite extreme, Rubik's cube examples in \autoref{ssec:rubik_s_cube_example} are the simplest perturbations of the case when automorphisms act by a finite group.

\subsection{Kummer examples}
	\label{ssec:kummer_examples}

A natural class of uniformly hyperbolic automorphisms of K3 surfaces comes from Kummer examples.
The starting point is a $2$-dimensional torus with an automorphism coming from a linear action on the universal cover.
The quotient by the involution $p\mapsto -p$ (plus a blowup of the resulting singular points in the algebraic case) gives a K3 surface with a uniformly hyperbolic automorphism.
The natural tropicalization of this example is described below.
These examples also appear in the paper of Spalding and Veselov \cite{SpaldingVeselov_TropicalCayley}, though they are natural and I was led to them independently of \cite{SpaldingVeselov_TropicalCayley}.

\subsubsection{The construction}
	\label{sssec:the_construction}

Let $E$ be an elliptic curve, tropical or in the usual complex-geometric sense.
It has a double cover to $\bP^1$, denoted $c:E\to \bP^1$.
The product $E\times E$ has three maps to $\bP^1$, written as
\begin{align*}
	C \colon & E\times E \to \bP^1 \times \bP^1 \times \bP^1\\
	& (a,b) \mapsto (c(a),c(b),c(a+b))
\end{align*}
and it is clear that $C(p)=C(-p)$, so that $E\times E$ is a double cover of its image $K:=C(E\times E)$, and there is a natural identification $K = E\times E / \pm 1$.

The product $E\times E$ is equipped with an action of $\GL_2\bZ$ which takes $(a,b)\in E\times E$ to the linear combination determined by the matrix, using the group structure on $E$.
Let $\tilde{\Gamma}(2)\subset \GL_2(\bZ)$ be the subgroup of matrices which are congruent to $\id \mod 2$.

The image $C(E)=K\subset \bP^1\times \bP^1\times \bP^1$ has three involutions (see \autoref{sssec:the_basic_example}) coming from the exchange of the double-sheeted covers $K\to \bP^1\times \bP^1$, denoted by $\sigma_x,\sigma_y,\sigma_z$.

\begin{proposition}[Semiconjugacy of the linear action]
	\label{prop:semiconjugacy_of_the_linear_action}
	The quotient $\tilde{\Gamma}(2)/\pm 1$ is freely generated by the three involutions
	\[
		\iota_x:= \begin{bmatrix}
			1 & 2 \\
			0 & -1 \\
		\end{bmatrix}
		\quad 
		\iota_y := \begin{bmatrix}
			-1 & 0 \\
			2 & 1 \\
		\end{bmatrix}
		\quad 
		\iota_z := \begin{bmatrix}
			-1 & 0 \\
			0 & 1 \\
		\end{bmatrix}.
	\]
	The representation $\tilde{\Gamma}(2) \to \tilde{\Gamma}(2)/\pm 1 \to \Aut(K)$ taking $\iota_x\to \sigma_x, \iota_y\to \sigma_y, \iota_z\to \sigma_z$ semiconjugates the action of $\tilde{\Gamma}(2)$ on $E\times E$ and that of the group generated by $\sigma_x,\sigma_y,\sigma_z$ on $K$.
\end{proposition}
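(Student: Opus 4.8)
The plan is to split the statement into a purely group-theoretic part (the free-product structure of $\tilde{\Gamma}(2)/\pm 1$) and a geometric part (the semiconjugacy), the first feeding into the second. For the group-theoretic part, I would take as input the classical fact that the determinant-one subgroup $\Gamma(2)/\{\pm 1\}$ is free of rank two, freely generated by $A=\begin{bmatrix} 1 & 2 \\ 0 & 1\end{bmatrix}$ and $B=\begin{bmatrix} 1 & 0 \\ 2 & 1\end{bmatrix}$. The determinant then gives a split short exact sequence $1\to \Gamma(2)/\pm 1\to \tilde{\Gamma}(2)/\pm 1\to \bZ/2\to 1$, split by $\iota_z$ (which has determinant $-1$ and satisfies $\iota_z^2=\mathrm{id}$), so that $\tilde{\Gamma}(2)/\pm 1\cong \big(\Gamma(2)/\pm 1\big)\rtimes \langle \iota_z\rangle$. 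A one-line matrix computation then gives $\iota_z\iota_x = A$ and $\iota_z\iota_y = B$ modulo $\pm 1$ (equivalently $\iota_x=\iota_z A$, $\iota_y=\iota_z B$), and shows that conjugation by $\iota_z$ inverts both $A$ and $B$.

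I would next compare this with the model $\bZ/2 * \bZ/2 * \bZ/2=\langle a\rangle * \langle b\rangle * \langle c\rangle$: the elements $ca$ and $cb$ freely generate the index-two subgroup of even-length words (a Reidemeister--Schreier computation, with rank two confirmed by the Euler characteristic), and $c$ conjugates them to their inverses. Hence $\bZ/2 * \bZ/2 * \bZ/2\cong F_2\rtimes\langle c\rangle$ with exactly the same inversion action, and the assignment $A\mapsto ca$, $B\mapsto cb$, $\iota_z\mapsto c$ extends to an isomorphism of the two semidirect products. Under it $\iota_x=\iota_z A\mapsto a$, $\iota_y=\iota_z B\mapsto b$ and $\iota_z\mapsto c$, which is precisely the assertion that $\tilde{\Gamma}(2)/\pm 1$ is freely generated by the three involutions.

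For the semiconjugacy, I would first note that, the previous step having exhibited $\tilde{\Gamma}(2)/\pm 1$ as the free product of the order-two groups $\langle\iota_x\rangle,\langle\iota_y\rangle,\langle\iota_z\rangle$, and $\sigma_x,\sigma_y,\sigma_z$ being involutions of $K$, the universal property of free products supplies a well-defined homomorphism $\rho\colon \tilde{\Gamma}(2)/\pm 1\to \Aut(K)$ with $\rho(\iota_w)=\sigma_w$; no relations beyond $\iota_w^2=\mathrm{id}$ need checking. It then suffices to verify $C\circ\iota_w=\sigma_w\circ C$ on $E\times E$ for $w\in\{x,y,z\}$, since $C$ is invariant under $p\mapsto -p$ so the condition descends to $\tilde{\Gamma}(2)/\pm 1$, the collection of $\gamma$ satisfying it is a subgroup, and the $\iota_w$ generate. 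Each case is an explicit computation: for instance $\iota_x(a,b)=(a+2b,-b)$ gives $C(\iota_x(a,b))=(c(a+2b),c(b),c(a+b))$, which shares the second and third coordinates with $C(a,b)=(c(a),c(b),c(a+b))$ and is the second point in the fibre of the projection $K\to \bP^1\times \bP^1$ that forgets the $x$-coordinate; as $\sigma_x$ is by definition the deck involution of that double cover, this equals $\sigma_x\circ C$. The computations $\iota_y(a,b)=(-a,2a+b)$ and $\iota_z(a,b)=(-a,b)$ handle the remaining two generators identically.

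The hard part will be bookkeeping rather than conceptual. On the algebraic side I must ensure the abstract isomorphism $\tilde{\Gamma}(2)/\pm 1\cong \bZ/2 * \bZ/2 * \bZ/2$ carries $\iota_x,\iota_y,\iota_z$ to the standard generators and not to some other triple of involutions; this is exactly what the explicit matching $A\mapsto ca$, $B\mapsto cb$, $\iota_z\mapsto c$ secures. On the geometric side, the delicate point is keeping the two-to-one ambiguities of the covers $c\colon E\to \bP^1$ and $C\colon E\times E\to K$ mutually consistent, so that ``the other point of the fibre'' produced by $\iota_w$ genuinely coincides with the sheet-swap defining $\sigma_w$. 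Once the sign conventions and the identification of each $\sigma_w$ with the correct coordinate projection are fixed, both halves of the argument are routine.
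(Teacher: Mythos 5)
Your proposal is correct and takes essentially the same approach as the paper: both rest the freeness claim on the classical fact about $\Gamma(2)\subset \SL_2\bZ$ being (freely, modulo $\pm\id$) generated by the two elementary unipotent matrices, and both verify the semiconjugacy by the identical three computations (e.g. $C(\iota_x(a,b))=(c(a+2b),c(b),c(a+b))$, using $c(-t)=c(t)$ and the definition of $\sigma_x$ as the deck involution of the double cover $K\to \bP^1\times\bP^1$). Where the paper compresses the group-theoretic step into ``follows from the classical fact,'' you supply the deduction explicitly (the determinant splitting $\tilde{\Gamma}(2)/\pm 1\cong (\Gamma(2)/\pm 1)\rtimes\langle\iota_z\rangle$ and the comparison with $\bZ/2 * \bZ/2 * \bZ/2$), which is a careful elaboration of the same argument rather than a different route.
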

\begin{proof}
	The claim about the freeness of the group generated by the involutions follows from the classical fact that the subgroup $\Gamma(2)\subset \SL_2\bZ$ of matrices congruent to $\id\mod 2$ is generated by $- \id$ and $\begin{bmatrix}
		1 & 2 \\
		0 & 1 \\
	\end{bmatrix}
	, \begin{bmatrix}
		1 & 0 \\
		2 & 1 \\
	\end{bmatrix}
	$.
	To check that the representation is indeed a semiconjugation, recall that $C(a,b)=(c(a),c(b),c(a+b))$.
	For $\iota_x$, the action gives
	\begin{align*}
		C(\iota_x(a,b)) & = C(a+2b, -b)= (c(a+2b),c(-b), c(a+b)) \\
		& = (c(a+2b),c(b),c(a+b))
	\end{align*}	
	so that indeed $\iota_x$ exchanges the two points in the image of $C$ for which the second and third positions are the same.
	The calculation for $\iota_y$ is similar.
	For $\iota_z$, the action gives
	\[
		C(\iota_z(a,b)) = C(-a,b) = (c(a),c(b),c(-a+b))
	\]
	so that the desired property holds.
\end{proof}

\subsubsection{The tropical Kummer K3}
	\label{sssec:the_tropical_kummer_k3}
The tropical incarnation of the above discussion starts with $E:=\bR/\bZ$, $\Trop(\bP^1) = \bR\cup \left\lbrace \pm \infty \right\rbrace$ and the map
\begin{align*}
	c\colon \bR/\bZ & \to \bR \subset \Trop(\bP^1)\\
	a & \mapsto 4\cdot \dist(a,\bZ) - 1
\end{align*}
where the normalization is chosen so that the image is $[-1,1]$.
Let
\[
	C : \bR^2/\bZ^2 \to \bR^3
\]
be the map constructed in \autoref{sssec:the_construction}.

\begin{figure}[ht]
	\centering
	\includegraphics[width=1.0\linewidth]{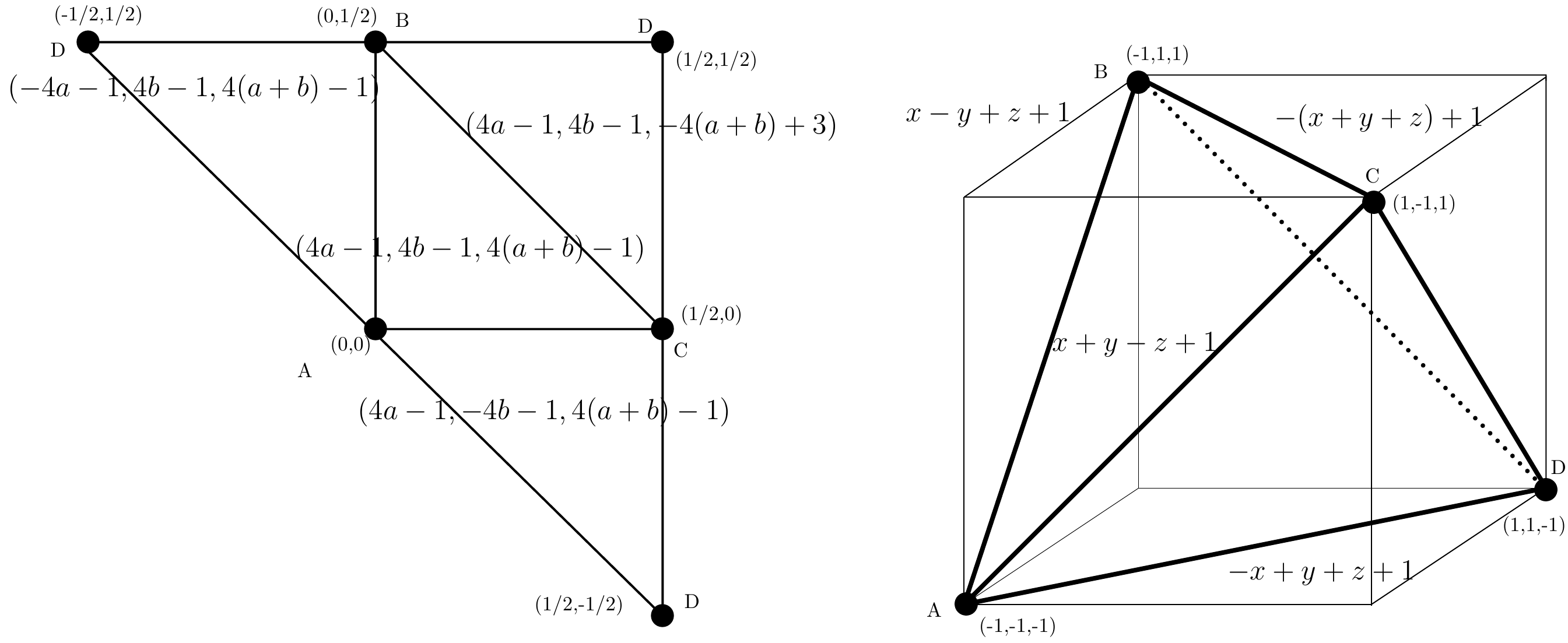}
	\captionsetup{width=0.95\linewidth}
	\caption{A fundamental domain in the $(a,b)$ plane $\bR^2$ for the $\bZ^2$ and $\pm 1$ action, and its image under the map to $\bR^3$.
	The domain is divided into $4$ triangles where the embedding is affine, with corresponding affine maps to $\bR^3$ indicated on each triangle.
	The face and equations of the image tetrahedron are:\newline
	$ABC: x+y-z+1=0 \quad \hfill  BCD:  -(x+y+z) + 1 = 0$
	$ABD: x-y+z+1= 0 \quad \hfill  ACD: -x+y+z + 1 = 0$
	}
	\label{fig:pictures_tropical_kummer}
\end{figure}

The image $K$ is a tetrahedron, determined by the equation
\[
	h(x,y,z) = \min(-x+y+z, x-y+z,x+y-z,-x-y-z) = -1
\]
To determine say the explicit form of the involution $\sigma_x$, rewrite
\[
	h(x,y,z) = \min(-x + \min(y+z,-y-z), x + \min(-y+z,y-z))
\]
so that the involution becomes (using \autoref{sssec:the_basic_example})
\begin{align*}
	x & \mapsto -x + \min(y+z,-y-z) - \min(y-z,-y+z)\\
	 & = -x + |y+z| - |y-z|
\end{align*}
Note that the entire picture scales, i.e. the level sets $h=-\alpha$ give a family of isomorphic tropical Kummer K3 surfaces, and the automorphisms commute with this scaling.

\begin{remark}
It is amusing to observe that the interior of the tropical Kummer K3 surface is isomorphic to the moduli space of representations of the free group on two letters into $\SU(2)$.
A conjugacy class in $\SU(2)$ is determined by an angle $\theta\in[0,\pi]$ and the isomorphism is given by taking a representation to the three conjugacy classes at the cusps, viewing the free group on two letters as the fundamental group of the thrice-punctured sphere.	
\end{remark}


\subsection{Rubik's cube example}
	\label{ssec:rubik_s_cube_example}

\begin{figure}[h]
	\centering
	\begin{tikzpicture}
	[x={(-0.448761cm, -0.552560cm)},
	y={(0.678244cm, 0.301157cm)},
	z={(-0.581893cm, 0.777163cm)},
	scale=0.750000,
	back/.style={loosely dotted, thin},
	edge/.style={color=black, thick},
	facet/.style={fill=gray,fill opacity=0.400000},
	vertex/.style={inner sep=1pt,circle,draw=black!25!black,fill=black!75!black,thick,anchor=base}]
%
%
\coordinate (1.90000, 2.50000, -2.50000) at (1.90000, 2.50000, -2.50000);
\coordinate (2.50000, 1.90000, -2.50000) at (2.50000, 1.90000, -2.50000);
\coordinate (-2.50000, 1.10000, 2.50000) at (-2.50000, 1.10000, 2.50000);
\coordinate (-2.50000, 2.50000, 1.10000) at (-2.50000, 2.50000, 1.10000);
\coordinate (-2.50000, -2.50000, -2.50000) at (-2.50000, -2.50000, -2.50000);
\coordinate (-2.50000, -2.50000, 2.50000) at (-2.50000, -2.50000, 2.50000);
\coordinate (-2.50000, 2.50000, -2.50000) at (-2.50000, 2.50000, -2.50000);
\coordinate (2.50000, -2.50000, -2.50000) at (2.50000, -2.50000, -2.50000);
\coordinate (2.50000, -2.50000, 1.50000) at (2.50000, -2.50000, 1.50000);
\coordinate (1.50000, -2.50000, 2.50000) at (1.50000, -2.50000, 2.50000);
\coordinate (1.50000, 1.10000, 2.50000) at (1.50000, 1.10000, 2.50000);
\coordinate (1.90000, 1.50000, 2.10000) at (1.90000, 1.50000, 2.10000);
\coordinate (2.50000, 1.50000, 1.50000) at (2.50000, 1.50000, 1.50000);
\coordinate (2.50000, 1.90000, 1.10000) at (2.50000, 1.90000, 1.10000);
\coordinate (1.90000, 2.50000, 1.10000) at (1.90000, 2.50000, 1.10000);
\draw[edge,back] (-2.50000, -2.50000, -2.50000) -- (-2.50000, -2.50000, 2.50000);
\draw[edge,back] (-2.50000, -2.50000, -2.50000) -- (-2.50000, 2.50000, -2.50000);
\draw[edge,back] (-2.50000, -2.50000, -2.50000) -- (2.50000, -2.50000, -2.50000);
\node[vertex] at (-2.50000, -2.50000, -2.50000)     {};
\fill[facet] (-2.50000, 2.50000, 1.10000) -- (1.90000, 2.50000, 1.10000) -- (1.90000, 2.50000, -2.50000) -- (-2.50000, 2.50000, -2.50000) -- cycle {};
\fill[facet] (1.90000, 2.50000, 1.10000) -- (-2.50000, 2.50000, 1.10000) -- (-2.50000, 1.10000, 2.50000) -- (1.50000, 1.10000, 2.50000) -- (1.90000, 1.50000, 2.10000) -- cycle {};
\fill[facet] (1.50000, 1.10000, 2.50000) -- (-2.50000, 1.10000, 2.50000) -- (-2.50000, -2.50000, 2.50000) -- (1.50000, -2.50000, 2.50000) -- cycle {};
\fill[facet] (2.50000, 1.50000, 1.50000) -- (2.50000, -2.50000, 1.50000) -- (1.50000, -2.50000, 2.50000) -- (1.50000, 1.10000, 2.50000) -- (1.90000, 1.50000, 2.10000) -- cycle {};
\fill[facet] (1.90000, 2.50000, 1.10000) -- (1.90000, 1.50000, 2.10000) -- (2.50000, 1.50000, 1.50000) -- (2.50000, 1.90000, 1.10000) -- cycle {};
\fill[facet] (1.90000, 2.50000, 1.10000) -- (1.90000, 2.50000, -2.50000) -- (2.50000, 1.90000, -2.50000) -- (2.50000, 1.90000, 1.10000) -- cycle {};
\fill[facet] (2.50000, 1.90000, 1.10000) -- (2.50000, 1.90000, -2.50000) -- (2.50000, -2.50000, -2.50000) -- (2.50000, -2.50000, 1.50000) -- (2.50000, 1.50000, 1.50000) -- cycle {};
\draw[edge] (1.90000, 2.50000, -2.50000) -- (2.50000, 1.90000, -2.50000);
\draw[edge] (1.90000, 2.50000, -2.50000) -- (-2.50000, 2.50000, -2.50000);
\draw[edge] (1.90000, 2.50000, -2.50000) -- (1.90000, 2.50000, 1.10000);
\draw[edge] (2.50000, 1.90000, -2.50000) -- (2.50000, -2.50000, -2.50000);
\draw[edge] (2.50000, 1.90000, -2.50000) -- (2.50000, 1.90000, 1.10000);
\draw[edge] (-2.50000, 1.10000, 2.50000) -- (-2.50000, 2.50000, 1.10000);
\draw[edge] (-2.50000, 1.10000, 2.50000) -- (-2.50000, -2.50000, 2.50000);
\draw[edge] (-2.50000, 1.10000, 2.50000) -- (1.50000, 1.10000, 2.50000);
\draw[edge] (-2.50000, 2.50000, 1.10000) -- (-2.50000, 2.50000, -2.50000);
\draw[edge] (-2.50000, 2.50000, 1.10000) -- (1.90000, 2.50000, 1.10000);
\draw[edge] (-2.50000, -2.50000, 2.50000) -- (1.50000, -2.50000, 2.50000);
\draw[edge] (2.50000, -2.50000, -2.50000) -- (2.50000, -2.50000, 1.50000);
\draw[edge] (2.50000, -2.50000, 1.50000) -- (1.50000, -2.50000, 2.50000);
\draw[edge] (2.50000, -2.50000, 1.50000) -- (2.50000, 1.50000, 1.50000);
\draw[edge] (1.50000, -2.50000, 2.50000) -- (1.50000, 1.10000, 2.50000);
\draw[edge] (1.50000, 1.10000, 2.50000) -- (1.90000, 1.50000, 2.10000);
\draw[edge] (1.90000, 1.50000, 2.10000) -- (2.50000, 1.50000, 1.50000);
\draw[edge] (1.90000, 1.50000, 2.10000) -- (1.90000, 2.50000, 1.10000);
\draw[edge] (2.50000, 1.50000, 1.50000) -- (2.50000, 1.90000, 1.10000);
\draw[edge] (2.50000, 1.90000, 1.10000) -- (1.90000, 2.50000, 1.10000);
\node[vertex] at (1.90000, 2.50000, -2.50000)     {};
\node[vertex] at (2.50000, 1.90000, -2.50000)     {};
\node[vertex] at (-2.50000, 1.10000, 2.50000)     {};
\node[vertex] at (-2.50000, 2.50000, 1.10000)     {};
\node[vertex] at (-2.50000, -2.50000, 2.50000)     {};
\node[vertex] at (-2.50000, 2.50000, -2.50000)     {};
\node[vertex] at (2.50000, -2.50000, -2.50000)     {};
\node[vertex] at (2.50000, -2.50000, 1.50000)     {};
\node[vertex] at (1.50000, -2.50000, 2.50000)     {};
\node[vertex] at (1.50000, 1.10000, 2.50000)     {};
\node[vertex] at (1.90000, 1.50000, 2.10000)     {};
\node[vertex] at (2.50000, 1.50000, 1.50000)     {};
\node[vertex] at (2.50000, 1.90000, 1.10000)     {};
\node[vertex] at (1.90000, 2.50000, 1.10000)     {};
\end{tikzpicture}
\begin{tikzpicture}%
	[x={(-0.448761cm, -0.552560cm)},
	y={(0.678244cm, 0.301157cm)},
	z={(-0.581893cm, 0.777163cm)},
	scale=0.750000,
	back/.style={loosely dotted, thin},
	edge/.style={color=black, thick},
	facet/.style={fill=gray,fill opacity=0.400000},
	vertex/.style={inner sep=1pt,circle,draw=black!25!black,fill=black!75!black,thick,anchor=base}]
%
%
\coordinate (1.90000, 2.50000, -2.50000) at (1.90000, 2.50000, -2.50000);
\coordinate (2.50000, 1.90000, -2.50000) at (2.50000, 1.90000, -2.50000);
\coordinate (-2.50000, 1.10000, 2.50000) at (-2.50000, 1.10000, 2.50000);
\coordinate (-2.50000, 2.50000, 1.10000) at (-2.50000, 2.50000, 1.10000);
\coordinate (-2.50000, -2.50000, -2.50000) at (-2.50000, -2.50000, -2.50000);
\coordinate (-2.50000, -2.50000, 2.50000) at (-2.50000, -2.50000, 2.50000);
\coordinate (-2.50000, 2.50000, -2.50000) at (-2.50000, 2.50000, -2.50000);
\coordinate (2.50000, -2.50000, -2.50000) at (2.50000, -2.50000, -2.50000);
\coordinate (2.50000, -2.50000, 1.50000) at (2.50000, -2.50000, 1.50000);
\coordinate (1.50000, -2.50000, 2.50000) at (1.50000, -2.50000, 2.50000);
\coordinate (0.90000, 1.10000, 2.50000) at (0.90000, 1.10000, 2.50000);
\coordinate (1.50000, 0.50000, 2.50000) at (1.50000, 0.50000, 2.50000);
\coordinate (2.50000, 0.50000, 1.50000) at (2.50000, 0.50000, 1.50000);
\coordinate (0.90000, 2.50000, 1.10000) at (0.90000, 2.50000, 1.10000);
\coordinate (1.90000, 2.50000, 0.10000) at (1.90000, 2.50000, 0.10000);
\coordinate (2.50000, 1.90000, 0.10000) at (2.50000, 1.90000, 0.10000);
\draw[edge,back] (-2.50000, -2.50000, -2.50000) -- (-2.50000, -2.50000, 2.50000);
\draw[edge,back] (-2.50000, -2.50000, -2.50000) -- (-2.50000, 2.50000, -2.50000);
\draw[edge,back] (-2.50000, -2.50000, -2.50000) -- (2.50000, -2.50000, -2.50000);
\node[vertex] at (-2.50000, -2.50000, -2.50000)     {};
\fill[facet] (1.90000, 2.50000, 0.10000) -- (1.90000, 2.50000, -2.50000) -- (-2.50000, 2.50000, -2.50000) -- (-2.50000, 2.50000, 1.10000) -- (0.90000, 2.50000, 1.10000) -- cycle {};
\fill[facet] (1.50000, 0.50000, 2.50000) -- (1.50000, -2.50000, 2.50000) -- (-2.50000, -2.50000, 2.50000) -- (-2.50000, 1.10000, 2.50000) -- (0.90000, 1.10000, 2.50000) -- cycle {};
\fill[facet] (0.90000, 2.50000, 1.10000) -- (-2.50000, 2.50000, 1.10000) -- (-2.50000, 1.10000, 2.50000) -- (0.90000, 1.10000, 2.50000) -- cycle {};
\fill[facet] (2.50000, 0.50000, 1.50000) -- (2.50000, -2.50000, 1.50000) -- (1.50000, -2.50000, 2.50000) -- (1.50000, 0.50000, 2.50000) -- cycle {};
\fill[facet] (2.50000, 1.90000, 0.10000) -- (2.50000, 0.50000, 1.50000) -- (1.50000, 0.50000, 2.50000) -- (0.90000, 1.10000, 2.50000) -- (0.90000, 2.50000, 1.10000) -- (1.90000, 2.50000, 0.10000) -- cycle {};
\fill[facet] (2.50000, 1.90000, 0.10000) -- (2.50000, 1.90000, -2.50000) -- (1.90000, 2.50000, -2.50000) -- (1.90000, 2.50000, 0.10000) -- cycle {};
\fill[facet] (2.50000, 1.90000, 0.10000) -- (2.50000, 1.90000, -2.50000) -- (2.50000, -2.50000, -2.50000) -- (2.50000, -2.50000, 1.50000) -- (2.50000, 0.50000, 1.50000) -- cycle {};
\draw[edge] (1.90000, 2.50000, -2.50000) -- (2.50000, 1.90000, -2.50000);
\draw[edge] (1.90000, 2.50000, -2.50000) -- (-2.50000, 2.50000, -2.50000);
\draw[edge] (1.90000, 2.50000, -2.50000) -- (1.90000, 2.50000, 0.10000);
\draw[edge] (2.50000, 1.90000, -2.50000) -- (2.50000, -2.50000, -2.50000);
\draw[edge] (2.50000, 1.90000, -2.50000) -- (2.50000, 1.90000, 0.10000);
\draw[edge] (-2.50000, 1.10000, 2.50000) -- (-2.50000, 2.50000, 1.10000);
\draw[edge] (-2.50000, 1.10000, 2.50000) -- (-2.50000, -2.50000, 2.50000);
\draw[edge] (-2.50000, 1.10000, 2.50000) -- (0.90000, 1.10000, 2.50000);
\draw[edge] (-2.50000, 2.50000, 1.10000) -- (-2.50000, 2.50000, -2.50000);
\draw[edge] (-2.50000, 2.50000, 1.10000) -- (0.90000, 2.50000, 1.10000);
\draw[edge] (-2.50000, -2.50000, 2.50000) -- (1.50000, -2.50000, 2.50000);
\draw[edge] (2.50000, -2.50000, -2.50000) -- (2.50000, -2.50000, 1.50000);
\draw[edge] (2.50000, -2.50000, 1.50000) -- (1.50000, -2.50000, 2.50000);
\draw[edge] (2.50000, -2.50000, 1.50000) -- (2.50000, 0.50000, 1.50000);
\draw[edge] (1.50000, -2.50000, 2.50000) -- (1.50000, 0.50000, 2.50000);
\draw[edge] (0.90000, 1.10000, 2.50000) -- (1.50000, 0.50000, 2.50000);
\draw[edge] (0.90000, 1.10000, 2.50000) -- (0.90000, 2.50000, 1.10000);
\draw[edge] (1.50000, 0.50000, 2.50000) -- (2.50000, 0.50000, 1.50000);
\draw[edge] (2.50000, 0.50000, 1.50000) -- (2.50000, 1.90000, 0.10000);
\draw[edge] (0.90000, 2.50000, 1.10000) -- (1.90000, 2.50000, 0.10000);
\draw[edge] (1.90000, 2.50000, 0.10000) -- (2.50000, 1.90000, 0.10000);
\node[vertex] at (1.90000, 2.50000, -2.50000)     {};
\node[vertex] at (2.50000, 1.90000, -2.50000)     {};
\node[vertex] at (-2.50000, 1.10000, 2.50000)     {};
\node[vertex] at (-2.50000, 2.50000, 1.10000)     {};
\node[vertex] at (-2.50000, -2.50000, 2.50000)     {};
\node[vertex] at (-2.50000, 2.50000, -2.50000)     {};
\node[vertex] at (2.50000, -2.50000, -2.50000)     {};
\node[vertex] at (2.50000, -2.50000, 1.50000)     {};
\node[vertex] at (1.50000, -2.50000, 2.50000)     {};
\node[vertex] at (0.90000, 1.10000, 2.50000)     {};
\node[vertex] at (1.50000, 0.50000, 2.50000)     {};
\node[vertex] at (2.50000, 0.50000, 1.50000)     {};
\node[vertex] at (0.90000, 2.50000, 1.10000)     {};
\node[vertex] at (1.90000, 2.50000, 0.10000)     {};
\node[vertex] at (2.50000, 1.90000, 0.10000)     {};
\end{tikzpicture}
	\caption{Typical pictures at the corners of a tropical K3 surface.}
	\label{fig:corners_of_a_tropical_K3}
\end{figure}
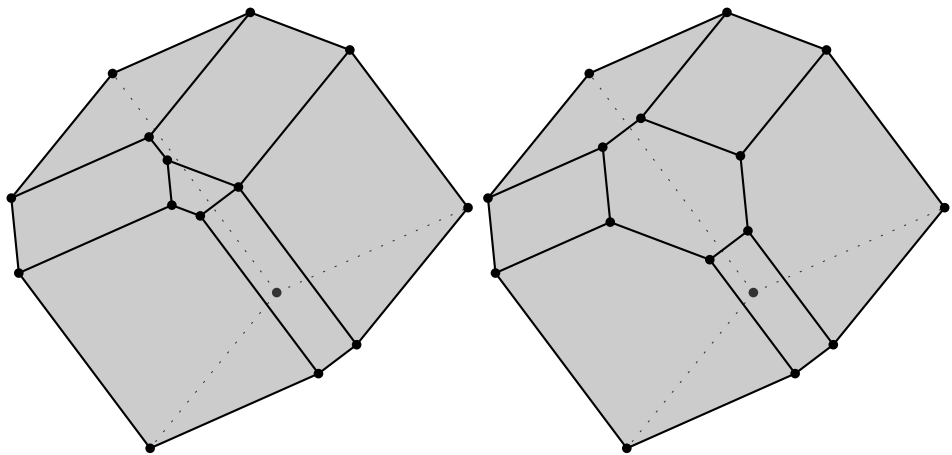

In general, the combinatorics of the skeleton of a tropical K3 surface can be quite involved, as \autoref{fig:corners_of_a_tropical_K3} illustrates.
We will describe one of the perhaps simplest examples of dynamics that's not of finite order.

The composition of two involutions, say $\sigma_x\cdot \sigma_y$ will be twisting the curves $z=const$ by amounts varying with $z$.
This is reminiscent of transformations applied to a Rubik's cube.

\subsubsection{The coefficients of the family}
	\label{sssec:the_coefficients_of_the_family}

It suffices to set the coefficients $c_{i,j,k}$ for the equation $h^{\circ}$ as described in \autoref{sssec:the_basic_example}.
All but the following\footnote{This is equivalent to omitting the corresponding linear form in the minimization} entries $c_{i,j,k}$ are set to $+\infty$:
\begin{eqnarray*}
	c_{-1,0,0} = 0 & \quad c_{0,-1,0} = 0 & \quad c_{0,0,-1} = 0 \\
	c_{1,0,0} = 0  & \quad c_{0,1,0} = 0  & \quad c_{0,0,1} = 0 \\
	               & c_{-1,-1,-1} = 1     &
\end{eqnarray*}
The corresponding function is $h^\circ = \min(-|x|,-|y|,-|z|, -(x+y+z)+1)$.

In the interval $t\in[0,\frac{1}{2}]$, the level set $h^\circ=t$ is a cube and the dynamics of the automorphism group is finite order.
Indeed, the reflections simply exchange two opposite faces, and act as reflections in the remaining four squares.

For $t\in[\frac{1}{2},1]$ the level set is a cube with a corner chopped off, the dynamics becomes non-trivial but it still has a domain where it is finite-order, but the size of the domain shrinks to zero as $t\to 1$.
Indeed the corner that has been cut off will be moved around non-trivially around the surface.
The domain where the group action is of finite order can be described as follows.
Consider the reflections $x\mapsto -x$ and similarly for $y,z$, and take the images of the cut-off corner under these reflections.
This will give a total of $8$ corners and the map will be of finite order on the union of rectangles on each face which do not intersect the corners.

For $t>1$ the chopped off corner affects the dynamics of the automorphism group on the entire sphere.

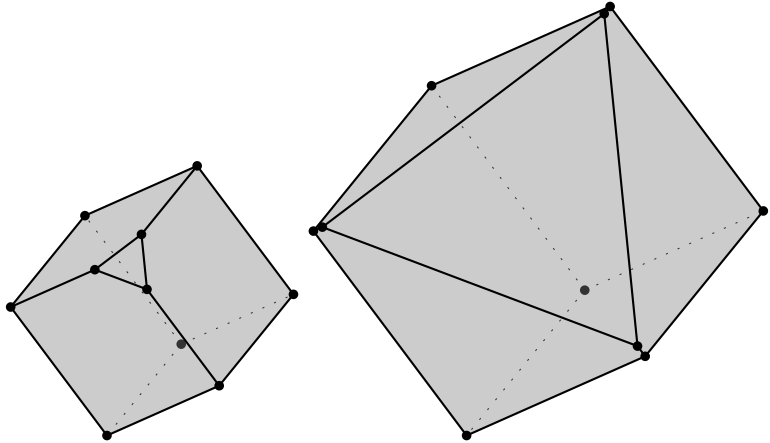
\begin{figure}[ht]
	\centering
\begin{tikzpicture}%
	[x={(-0.448761cm, -0.552560cm)},
	y={(0.678244cm, 0.301157cm)},
	z={(-0.581893cm, 0.777163cm)},
	scale=0.550000,
	back/.style={loosely dotted, thin},
	edge/.style={color=black, thick},
	facet/.style={fill=gray,fill opacity=0.400000},
	vertex/.style={inner sep=1pt,circle,draw=black!25!black,fill=black!75!black,thick,anchor=base}]
%
%
\coordinate (-2.00000, -2.00000, -2.00000) at (-2.00000, -2.00000, -2.00000);
\coordinate (-2.00000, -2.00000, 2.00000) at (-2.00000, -2.00000, 2.00000);
\coordinate (-2.00000, 2.00000, -2.00000) at (-2.00000, 2.00000, -2.00000);
\coordinate (-2.00000, 2.00000, 2.00000) at (-2.00000, 2.00000, 2.00000);
\coordinate (1.00000, 2.00000, 2.00000) at (1.00000, 2.00000, 2.00000);
\coordinate (2.00000, -2.00000, -2.00000) at (2.00000, -2.00000, -2.00000);
\coordinate (2.00000, -2.00000, 2.00000) at (2.00000, -2.00000, 2.00000);
\coordinate (2.00000, 1.00000, 2.00000) at (2.00000, 1.00000, 2.00000);
\coordinate (2.00000, 2.00000, -2.00000) at (2.00000, 2.00000, -2.00000);
\coordinate (2.00000, 2.00000, 1.00000) at (2.00000, 2.00000, 1.00000);
\draw[edge,back] (-2.00000, -2.00000, -2.00000) -- (-2.00000, -2.00000, 2.00000);
\draw[edge,back] (-2.00000, -2.00000, -2.00000) -- (-2.00000, 2.00000, -2.00000);
\draw[edge,back] (-2.00000, -2.00000, -2.00000) -- (2.00000, -2.00000, -2.00000);
\node[vertex] at (-2.00000, -2.00000, -2.00000)     {};
\fill[facet] (2.00000, 2.00000, 1.00000) -- (2.00000, 1.00000, 2.00000) -- (2.00000, -2.00000, 2.00000) -- (2.00000, -2.00000, -2.00000) -- (2.00000, 2.00000, -2.00000) -- cycle {};
\fill[facet] (2.00000, 2.00000, 1.00000) -- (1.00000, 2.00000, 2.00000) -- (-2.00000, 2.00000, 2.00000) -- (-2.00000, 2.00000, -2.00000) -- (2.00000, 2.00000, -2.00000) -- cycle {};
\fill[facet] (2.00000, 1.00000, 2.00000) -- (1.00000, 2.00000, 2.00000) -- (-2.00000, 2.00000, 2.00000) -- (-2.00000, -2.00000, 2.00000) -- (2.00000, -2.00000, 2.00000) -- cycle {};
\fill[facet] (2.00000, 2.00000, 1.00000) -- (1.00000, 2.00000, 2.00000) -- (2.00000, 1.00000, 2.00000) -- cycle {};
\draw[edge] (-2.00000, -2.00000, 2.00000) -- (-2.00000, 2.00000, 2.00000);
\draw[edge] (-2.00000, -2.00000, 2.00000) -- (2.00000, -2.00000, 2.00000);
\draw[edge] (-2.00000, 2.00000, -2.00000) -- (-2.00000, 2.00000, 2.00000);
\draw[edge] (-2.00000, 2.00000, -2.00000) -- (2.00000, 2.00000, -2.00000);
\draw[edge] (-2.00000, 2.00000, 2.00000) -- (1.00000, 2.00000, 2.00000);
\draw[edge] (1.00000, 2.00000, 2.00000) -- (2.00000, 1.00000, 2.00000);
\draw[edge] (1.00000, 2.00000, 2.00000) -- (2.00000, 2.00000, 1.00000);
\draw[edge] (2.00000, -2.00000, -2.00000) -- (2.00000, -2.00000, 2.00000);
\draw[edge] (2.00000, -2.00000, -2.00000) -- (2.00000, 2.00000, -2.00000);
\draw[edge] (2.00000, -2.00000, 2.00000) -- (2.00000, 1.00000, 2.00000);
\draw[edge] (2.00000, 1.00000, 2.00000) -- (2.00000, 2.00000, 1.00000);
\draw[edge] (2.00000, 2.00000, -2.00000) -- (2.00000, 2.00000, 1.00000);
\node[vertex] at (-2.00000, -2.00000, 2.00000)     {};
\node at (-2.00000, -2.00000, 2.00000) [above = 2pt]    {};
\node[vertex] at (-2.00000, 2.00000, -2.00000)     {};
\node[vertex] at (-2.00000, 2.00000, 2.00000)     {};
\node[vertex] at (1.00000, 2.00000, 2.00000)     {};
\node[vertex] at (2.00000, -2.00000, -2.00000)     {};
\node[vertex] at (2.00000, -2.00000, 2.00000)     {};
\node[vertex] at (2.00000, 1.00000, 2.00000)     {};
\node[vertex] at (2.00000, 2.00000, -2.00000)     {};
\node[vertex] at (2.00000, 2.00000, 1.00000)     {};
\end{tikzpicture}
\begin{tikzpicture}%
	[x={(-0.448761cm, -0.552560cm)},
	y={(0.678244cm, 0.301157cm)},
	z={(-0.581893cm, 0.777163cm)},
	scale=0.1750000,
	back/.style={loosely dotted, thin},
	edge/.style={color=black, thick},
	facet/.style={fill=gray,fill opacity=0.400000},
	vertex/.style={inner sep=1pt,circle,draw=black!25!black,fill=black!75!black,thick,anchor=base}]
%
%
\coordinate (-10.00000, -10.00000, 10.00000) at (-10.00000, -10.00000, 10.00000);
\coordinate (-10.00000, -10.00000, -10.00000) at (-10.00000, -10.00000, -10.00000);
\coordinate (-10.00000, 10.00000, 10.00000) at (-10.00000, 10.00000, 10.00000);
\coordinate (-10.00000, 10.00000, -10.00000) at (-10.00000, 10.00000, -10.00000);
\coordinate (10.00000, 10.00000, -10.00000) at (10.00000, 10.00000, -10.00000);
\coordinate (10.00000, -10.00000, -10.00000) at (10.00000, -10.00000, -10.00000);
\coordinate (10.00000, -10.00000, 10.00000) at (10.00000, -10.00000, 10.00000);
\coordinate (10.00000, 10.00000, -9.00000) at (10.00000, 10.00000, -9.00000);
\coordinate (10.00000, -9.00000, 10.00000) at (10.00000, -9.00000, 10.00000);
\coordinate (-9.00000, 10.00000, 10.00000) at (-9.00000, 10.00000, 10.00000);
\draw[edge,back] (-10.00000, -10.00000, 10.00000) -- (-10.00000, -10.00000, -10.00000);
\draw[edge,back] (-10.00000, -10.00000, -10.00000) -- (-10.00000, 10.00000, -10.00000);
\draw[edge,back] (-10.00000, -10.00000, -10.00000) -- (10.00000, -10.00000, -10.00000);
\node[vertex] at (-10.00000, -10.00000, -10.00000)     {};
\fill[facet] (-9.00000, 10.00000, 10.00000) -- (-10.00000, 10.00000, 10.00000) -- (-10.00000, -10.00000, 10.00000) -- (10.00000, -10.00000, 10.00000) -- (10.00000, -9.00000, 10.00000) -- cycle {};
\fill[facet] (-9.00000, 10.00000, 10.00000) -- (10.00000, 10.00000, -9.00000) -- (10.00000, -9.00000, 10.00000) -- cycle {};
\fill[facet] (-9.00000, 10.00000, 10.00000) -- (-10.00000, 10.00000, 10.00000) -- (-10.00000, 10.00000, -10.00000) -- (10.00000, 10.00000, -10.00000) -- (10.00000, 10.00000, -9.00000) -- cycle {};
\fill[facet] (10.00000, -9.00000, 10.00000) -- (10.00000, -10.00000, 10.00000) -- (10.00000, -10.00000, -10.00000) -- (10.00000, 10.00000, -10.00000) -- (10.00000, 10.00000, -9.00000) -- cycle {};
\draw[edge] (-10.00000, -10.00000, 10.00000) -- (-10.00000, 10.00000, 10.00000);
\draw[edge] (-10.00000, -10.00000, 10.00000) -- (10.00000, -10.00000, 10.00000);
\draw[edge] (-10.00000, 10.00000, 10.00000) -- (-10.00000, 10.00000, -10.00000);
\draw[edge] (-10.00000, 10.00000, 10.00000) -- (-9.00000, 10.00000, 10.00000);
\draw[edge] (-10.00000, 10.00000, -10.00000) -- (10.00000, 10.00000, -10.00000);
\draw[edge] (10.00000, 10.00000, -10.00000) -- (10.00000, -10.00000, -10.00000);
\draw[edge] (10.00000, 10.00000, -10.00000) -- (10.00000, 10.00000, -9.00000);
\draw[edge] (10.00000, -10.00000, -10.00000) -- (10.00000, -10.00000, 10.00000);
\draw[edge] (10.00000, -10.00000, 10.00000) -- (10.00000, -9.00000, 10.00000);
\draw[edge] (10.00000, 10.00000, -9.00000) -- (10.00000, -9.00000, 10.00000);
\draw[edge] (10.00000, 10.00000, -9.00000) -- (-9.00000, 10.00000, 10.00000);
\draw[edge] (10.00000, -9.00000, 10.00000) -- (-9.00000, 10.00000, 10.00000);
\node[vertex] at (-10.00000, -10.00000, 10.00000)     {};
\node[vertex] at (-10.00000, 10.00000, 10.00000)     {};
\node[vertex] at (-10.00000, 10.00000, -10.00000)     {};
\node[vertex] at (10.00000, 10.00000, -10.00000)     {};
\node[vertex] at (10.00000, -10.00000, -10.00000)     {};
\node[vertex] at (10.00000, -10.00000, 10.00000)     {};
\node[vertex] at (10.00000, 10.00000, -9.00000)     {};
\node[vertex] at (10.00000, -9.00000, 10.00000)     {};
\node[vertex] at (-9.00000, 10.00000, 10.00000)     {};
\end{tikzpicture}
	\caption{Tropical K3 surfaces in the Rubik's cube family.
	Left: level set in $[\frac{1}{2},1]$.
	Right: level set $>1$.
	The surfaces are not drawn to scale, i.e. in the $\bR^3$ that contains both, the one on the left is much smaller.}
	\label{fig:rubiks_cube}
\end{figure}

The plots of some unstable manifolds (in fact, iterates of some curves) in this family is displayed in \autoref{fig:unstable_manifolds}.

\begin{figure}[h]
	\centering
	\includegraphics[width=0.49\linewidth]{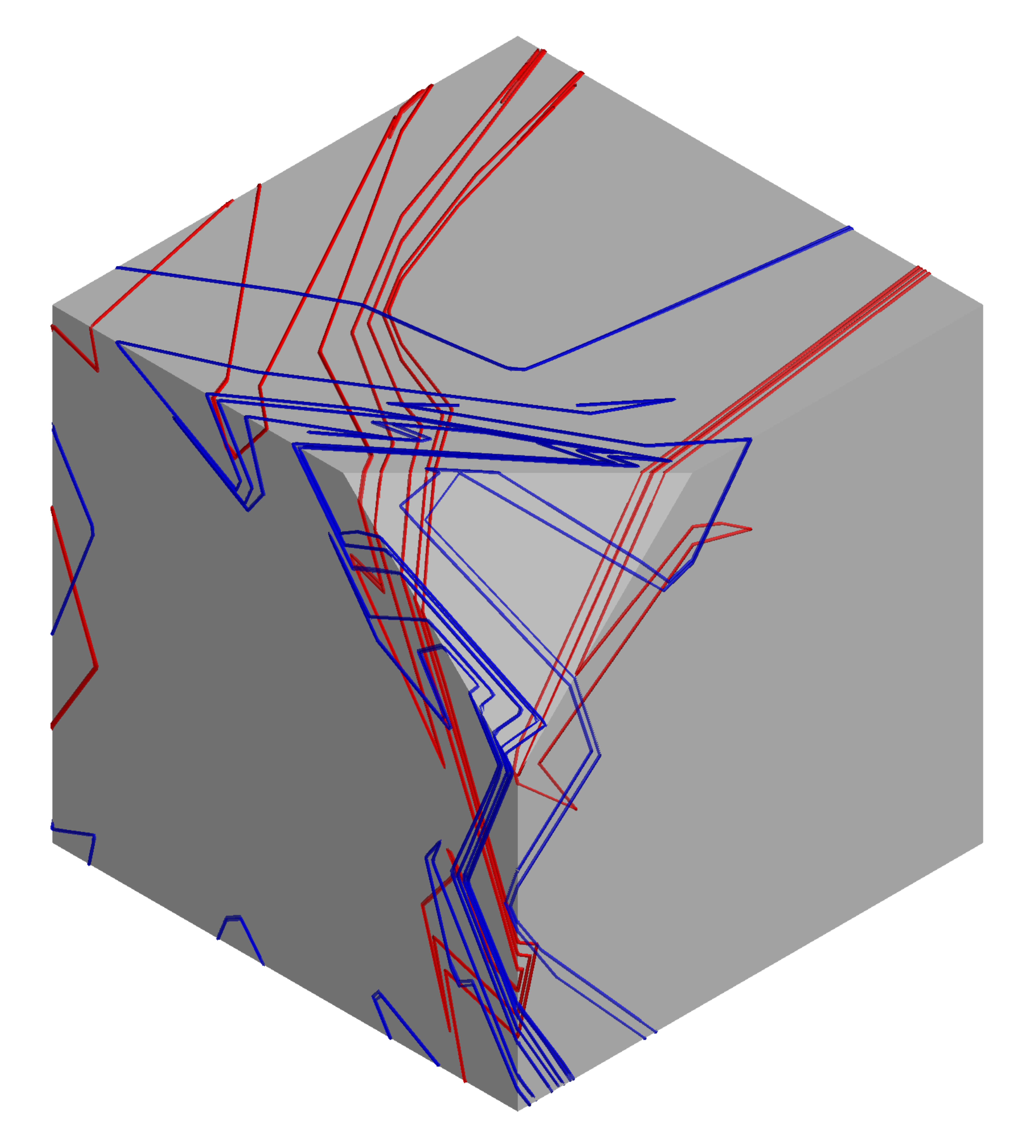}
	\includegraphics[width=0.48\linewidth]{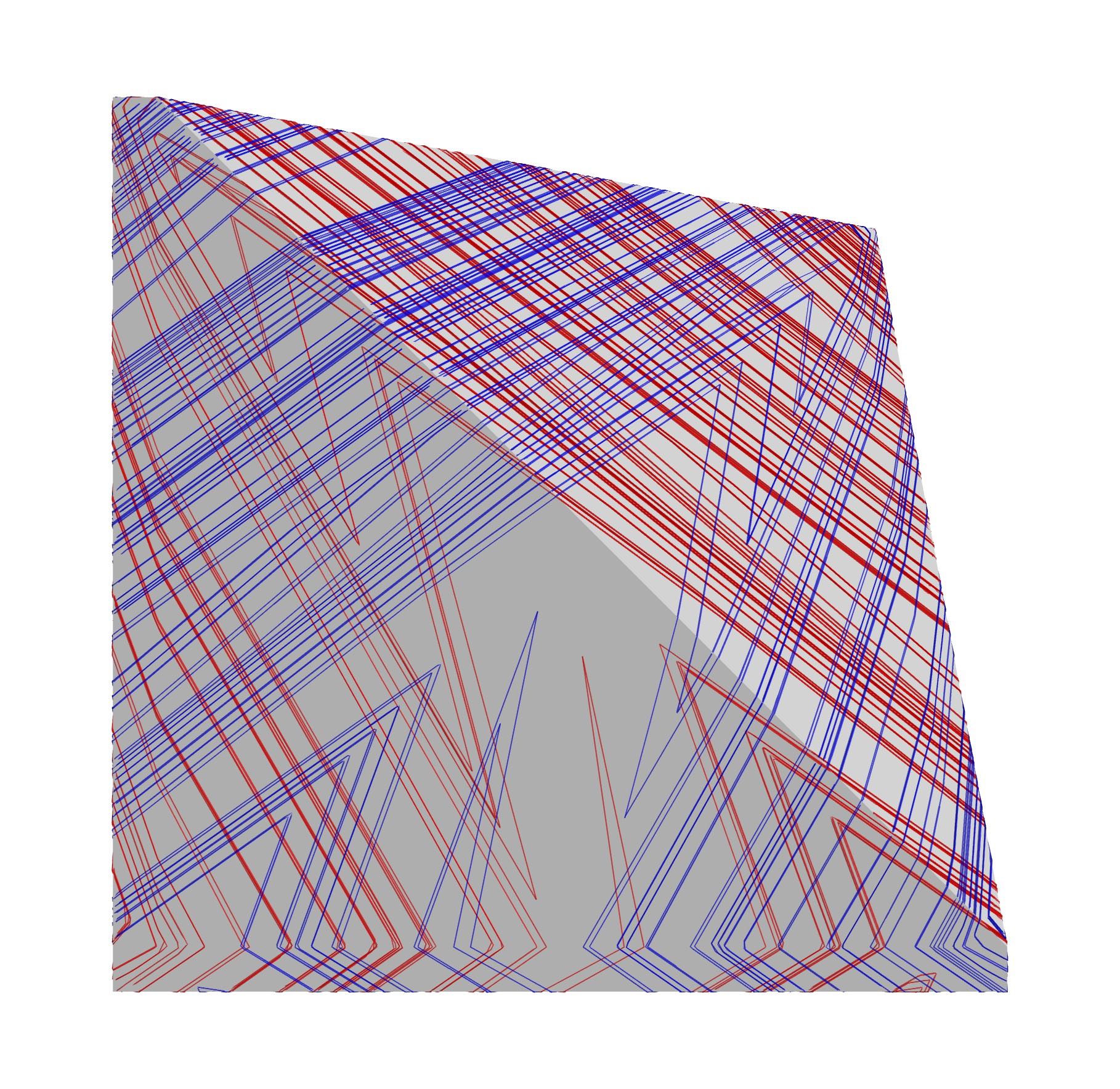}
	\caption{Forward (red) and backward (blue) iterates of the triangle face on the tropical K3.
	Left: for a small value of $t$.
	Right: for a large value of $t$.
	\autoref{fig:pictures_of_stable_unstable} contains further examples of iterates of the triangle face for a Rubik's cube example for large $t$.
	}
	\label{fig:unstable_manifolds}
\end{figure}



\section{Expanding PL maps on the line}
	\label{sec:expanding_pl_maps_on_the_line}

The basic object of study in this section will be PL maps on $\bR$ of a special form.
An analogous discussion is present in the work of Favre \& Rivera-Letelier \cite{Favre_RiveraLetelier} but in the context of Berkovich spaces.
The goal is to illustrate how elementary considerations, inspired by constructions in the Berkovich setting, can be used to study PL maps of the line.
It seems plausible that a similar elementary discussion, using the constructions in \autoref{sec:general_properties_of_tropical_k3_automorphisms}, could exist for tropical K3 automorphisms.

After discussing the formalism in \autoref{ssec:general_properties_of_pl_maps_of_the_line}, in \autoref{ssec:measures_from_potentials_1d} we construct the associated potential.
We can verify by elementary considerations that it is convex and hence leads to a measure.
The key to convexity is that there is a ``cone invariant by the dynamics'' (built from an appropriate class of convex functions).
In \autoref{ssec:properties_of_the_measure} we study some of the properties of the constructed measure; it is not always invariant by the dynamics since we are not in the setting of Berkovich spaces.
Finally, in \autoref{ssec:the_tent_map} we compute the relevant objects in the case of a uniformly hyperbolic tent map.

\paragraph{Notation}
We view $\bR$ as the set of equivalence classes of $(X_0:X_1)\in \bR^2$ modulo the equivalence $(X_0:X_1)\sim (X_0+t:X_1+t) \forall t\in \bR$, with the quotient map sending $(X_0:X_1)\mapsto X_1-X_0$.
The analogues of rational functions will then be PL maps $f:\bR\to \bR$ which can be lifted to PL maps $F:\bR^2\to \bR^2$ which are homogeneous of degree $n$, i.e. 
\[
	F( X_0+t:X_1+t ) = F(X_0:X_1) + n\cdot (t:t)
\]
and are expressible as
\begin{align}
\label{eqn:general_F_definition_1d}
\begin{split}
	F(X_0:X_1) & = (F_0:F_1) \text{ with} \\
	F_i(X_0:X_1) & = \min_j (a_{i}^j X_0 + b_{i}^jX_1 + c_{i}^j) \text{ and such that:}\\
	 & a_{i}^j + b_{i}^j = n  \quad a_{i}^j, b_i^j\in \bN
\end{split}
\end{align}

\subsection{General properties of PL maps of the line}
	\label{ssec:general_properties_of_pl_maps_of_the_line}

\subsubsection{Dehomogenization}
	\label{sssec:dehomogenization_f_1d}
The transformation in \autoref{eqn:general_F_definition_1d} descends to a map $f:\bR\to \bR$ by dehomogenizing the variables:
\begin{align*}
	f(x) & = f(X_1-X_0) \sim (0:F_1-F_0) \\
	 & = \min_j (a_{1}^j X_0 + b_{1}^jX_1 + c_{1}^j) - \min_j (a_{0}^j X_0 + b_{0}^jX_1 + c_{0}^j) \\
	 & = \min_j (b^j_{1}x + c_1^j) - \min_j (b_{0}^j x + c_0^j)
\end{align*}
where the last line is obtained by subtracting $n\cdot X_0$ from both minimizations, and using that $a_{i}+b_{i}=n$.

\subsubsection{Assumption on degree and coefficients}
	\label{sssec:assumption_on_degree_and_coefficients}

Different expressions for a PL function can have different degrees but give the same actual map $f:\bR \to \bR$.
For this reason, and to have the correct notion of degree, assume that in \autoref{eqn:general_F_definition_1d} there is at least one term of the form $n X_0 + c_\bullet $ occurring in one of $F_0, F_1$ (and possibly in both), since if such a term is missing one should subtract $(X_1:X_1)$ to get a lower-degree polynomial.
Similarly, assume there is at least one term of the form, $n X_1 + c_\bullet$, otherwise one could subtract $(X_0:X_0)$ to lower the degree.

In terms of the dehomogenized expressions in \autoref{sssec:dehomogenization_f_1d}, this means that at least one of $b_\bullet^\bullet$ must equal $n$, and at least one must equal $0$.

\subsubsection{Section and Cocycle}
	\label{sssec:section_and_cocycle}
There is also a natural section of the projection $\bR^2\to \bR$ sending $(X_0:X_1)\to X_1-X_0$ defined by $\sigma(x):= (-x/2:x/2)$.
Note however that this section does not commute with applying the transformations $F$ and $f$ on the respective spaces, namely we have
\[
	F(\sigma(x)) - \sigma(f(x)) = c(x)\cdot (1:1)
\]
since the two elements on the left-hand side agree when projected to $\bR$.
A direct calculation gives
\begin{align*}
	c(x) & = \frac{1}{2}\big(F_0(-\frac{x}{2}:\frac{x}{2}) + F_1(-\frac{x}{2}:\frac{x}{2})\big)\\
	& = \frac{1}{2}\big( \min_j ((b^j_{0} - a^j_{0})\frac{x}{2} + c^j_0) + \min_j(b^j_{1} - a^j_{1})\frac{x}{2} + c^j_1  \big)\\
	& =\frac{1}{2}\big( -n\cdot x + \min_j (b_0^jx + c_0^j) + \min_j (b_1^j x + c_1^j) \big)
\end{align*}
using that $a_0^j = n - b_0^j$.

\begin{definition}[Adapted Cone]
	\label{def:adapted_cone}
	Let $\cC$ denote the cone of concave functions on $\bR^2$ which are expressible as
	\[
		\phi = \min_{\alpha \in A} a_{\alpha} X_0 + b_{\alpha}X_1 + c_{\alpha} 
	\]
	with $a_{\alpha},b_\alpha\geq 0$.
\end{definition}

The following property will play a key role in obtaining convexity of potential functions.

\begin{proposition}
	\label{prop:preserving_cones_1d}
	Transformations $F:\bR^2\to \bR^2$ of the form in \autoref{eqn:general_F_definition_1d} preserve the cone $\cC$.
\end{proposition}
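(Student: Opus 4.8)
The plan is to verify directly that the cone $\cC$ is preserved under pullback, i.e. that $F^*\phi = \phi\circ F$ lies in $\cC$ whenever $\phi$ does. The entire argument rests on two elementary identities of the min-plus algebra: first, that a \emph{nonnegative} scalar commutes with minimization,
\[
	c\cdot \min_j u_j = \min_j (c\, u_j) \qquad (c\geq 0),
\]
and second, that the sum of two minima is the minimum over the product of the index sets,
\[
	\min_j u_j + \min_k v_k = \min_{j,k}(u_j+v_k).
\]
The first identity is where the sign hypothesis enters, and it is the reason the cone condition $a_\alpha,b_\alpha\geq 0$ is imposed in \autoref{def:adapted_cone}.

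Concretely, I would write $\phi = \min_{\alpha\in A}(a_\alpha X_0 + b_\alpha X_1 + c_\alpha)$ with $a_\alpha,b_\alpha\geq 0$, and recall $F=(F_0:F_1)$ with $F_i = \min_j(a_i^j X_0 + b_i^j X_1 + c_i^j)$ where $a_i^j,b_i^j\in\bN$. Evaluating $\phi$ at $F$ yields, for each fixed $\alpha$, the expression $a_\alpha F_0 + b_\alpha F_1 + c_\alpha$. Since $a_\alpha,b_\alpha\geq 0$, I can pull the scalars inside the minima defining $F_0,F_1$ using the first identity, and then merge the two resulting minimizations using the second, obtaining
\begin{align*}
	a_\alpha F_0 + b_\alpha F_1 + c_\alpha = \min_{j,k}\big[ & (a_\alpha a_0^j + b_\alpha a_1^k)X_0 + (a_\alpha b_0^j + b_\alpha b_1^k)X_1 \\
	& {} + (a_\alpha c_0^j + b_\alpha c_1^k + c_\alpha)\big].
\end{align*}
Taking the outer minimum over $\alpha$ as well then exhibits $F^*\phi$ as a minimum of affine functions indexed by triples $(\alpha,j,k)$.

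It remains only to observe that each such affine function has nonnegative $X_0$- and $X_1$-coefficients: the coefficient of $X_0$ is $a_\alpha a_0^j + b_\alpha a_1^k$ and that of $X_1$ is $a_\alpha b_0^j + b_\alpha b_1^k$, and all four products have nonnegative factors (using $a_i^j,b_i^j\in\bN$ together with $a_\alpha,b_\alpha\geq 0$), hence both coefficients are $\geq 0$. Thus $F^*\phi\in\cC$, completing the argument. There is no serious obstacle here; the only point requiring care is the bookkeeping of the index sets and confirming that nonnegativity genuinely propagates through both the scalar multiplication and the summation, which is precisely the role played by the hypothesis that the slopes of $F$ lie in $\bN$, i.e. that one works with the positive lift rather than the involutive one.
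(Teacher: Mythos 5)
Your proof is correct and follows essentially the same route as the paper's: both arguments rest on the identities $c\cdot\min_j u_j=\min_j(c\,u_j)$ for $c\geq 0$ and $\min_j u_j+\min_k v_k=\min_{j,k}(u_j+v_k)$, and both conclude by observing that nonnegativity of the slopes propagates through the resulting products $a_\alpha a_0^j+b_\alpha a_1^k$, etc. The only (cosmetic) difference is that the paper first reduces to a single affine function $\xi=aX_0+bX_1+c$ with $a,b\geq 0$, viewed as an extreme point of $\cC$, whereas you carry the outer minimum over $\alpha$ through the whole computation; the two are interchangeable since composition with $F$ commutes with taking minima.
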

\begin{proof}
	It suffices to check the condition for affine linear functions of the form
	\[
		\xi(X_0:X_1) = a X_0 + b X_1 + c \text{ with }a,b\geq 0.
	\]
	since these are the extreme points of the cone $\cC$.
	Computing directly
	\begin{align*}
		\xi(F(X_0:X_1))& = a F_0 + b F_1 + c\\
		& = a \min_j (a_{0}^j X_0 + b_{0}^jX_1 + c_{0}^j) + b \min_j (a_{1}^j X_0 + b_{1}^jX_1 + c_{1}^j) + c
	\end{align*}
	However, note that all the $a_\bullet^j, b_\bullet^j$ are non-negative.
	Using the manipulations (for positive $a,b$)
	\begin{align*}
		a \cdot \min_i \xi_i(X_0,X_1) & = \min_i a \cdot \xi_i(X_0:X_1)\\
		\min_{i\in A_1} a\xi_i + \min_{j\in A_2} b\xi_j & = \min_{(i,j)\in A_1,A_2} (a\xi_i + b\xi_j)
	\end{align*}
	so the terms defining $\xi(F)$ can be grouped to give
	\[
		\xi(F(X_0:X_1)) = \min_{\alpha \in A} a_{\alpha} X_0 + b_{\alpha} X_1 + c_\alpha
	\]
	for some explicit $a_{\alpha},b_\alpha\geq 0$.
\end{proof}

\begin{proposition}[Existence of a Potential]
	\label{prop:existence_of_a_potential}
	Assuming $n\geq 2$, there exists a unique function $G:\bR^2\to \bR$ which simultaneously:
	\begin{itemize}
		\item Is homogeneous of degree $1$, i.e. 
		\[
			G(X_0 + t:X_1 +t ) =G(X_0:X_1) + t 	
		\]
		\item Belongs to the cone $\cC$ and the following limits exist:
		\begin{align*}
			\lim_{X_1-X_0\to \infty} G(X_0:X_1) - \min(X_0:X_1)\\
			\lim_{X_0-X_1\to \infty} G(X_0:X_1) - \min(X_0:X_1)
		\end{align*}
		and are finite real numbers.
		\item Under iteration of the dynamics, satisfies the functional equation
		\[
			G(F(X_0:X_1)) = n\cdot G(X_0:X_1)
		\]
	\end{itemize}
\end{proposition}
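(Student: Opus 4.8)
The plan is to realize $G$ as the unique fixed point of the normalized pullback operator $T:=\tfrac1n F^{*}$, by a contraction-mapping argument entirely parallel to \autoref{thm:existence_of_a_potential}. Write $\Phi_0:=\min(X_0,X_1)$, which is homogeneous of degree $1$ and lies in the cone $\cC$ of \autoref{def:adapted_cone} (it is the minimum of $X_0$ and $X_1$, whose coefficients $(1,0)$ and $(0,1)$ are non-negative). Let $\mathcal P$ be the set of continuous $G\colon\bR^2\to\bR$ which are homogeneous of degree $1$, belong to $\cC$, and satisfy $\sup_{\bR^2}|G-\Phi_0|<\infty$. Any two elements of $\mathcal P$ differ by a degree-$0$ homogeneous function, which descends to a bounded continuous function of $x=X_1-X_0$, so $\dist(G_1,G_2):=\sup|G_1-G_2|$ is finite and $\mathcal P$ is a closed subset of the affine space $\Phi_0+C_b(\bR)$, hence complete for this distance; it is nonempty since $\Phi_0\in\mathcal P$. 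Closedness uses that, for degree-$1$ homogeneous functions, membership in $\cC$ is equivalent to concavity together with $x$-slopes lying in $[-\tfrac12,\tfrac12]$ (the relations $a_\alpha+b_\alpha=1$, $a_\alpha,b_\alpha\geq 0$), a property preserved under uniform limits.

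Next I would verify that $T$ preserves $\mathcal P$. Degree-$1$ homogeneity is immediate: if $G$ is degree-$1$ homogeneous then $G\circ F$ is degree-$n$ homogeneous because $F$ is, so $\tfrac1n G\circ F$ is again degree-$1$. Membership in $\cC$ follows from \autoref{prop:preserving_cones_1d}, since that result gives $G\circ F\in\cC$ and $\cC$ is a cone stable under the positive rescaling by $\tfrac1n$. For the boundedness condition, the triangle inequality reduces matters to bounding $T\Phi_0-\Phi_0=\tfrac1n\min(F_0,F_1)-\min(X_0,X_1)$ (and then using the contraction estimate below): I would check that this descends to a bounded function of $x$ by a direct asymptotic computation, using the normalization of \autoref{sssec:assumption_on_degree_and_coefficients} (some $b_\bullet^\bullet=0$ and some $b_\bullet^\bullet=n$), which controls the slopes of $f$ as $x\to\pm\infty$.

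The contraction estimate is the core of the argument. Given $G_1,G_2\in\mathcal P$, their difference is degree-$0$ homogeneous and descends to a bounded function $\psi(x)$; since $F$ lifts $f$ (see \autoref{sssec:section_and_cocycle}), the difference $TG_1-TG_2=\tfrac1n(G_1-G_2)\circ F$ descends to $\tfrac1n\,\psi\circ f$. As $f$ maps $\bR$ into itself, $\sup|\psi\circ f|\leq\sup|\psi|$, so $\dist(TG_1,TG_2)\leq\tfrac1n\dist(G_1,G_2)$, and $T$ is a strict contraction because $n\geq 2$. The Banach fixed point theorem then yields a unique $G\in\mathcal P$ with $TG=G$, i.e.\ $G(F(X_0:X_1))=n\cdot G(X_0:X_1)$.

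It remains to recover the two boundary limits and to promote uniqueness from $\mathcal P$ to the class in the statement. Writing $g(x):=G(\sigma(x))$, the function $g$ is concave with slope $s(x)\in[-\tfrac12,\tfrac12]$, and $g(x)+\tfrac12|x|=G(\sigma(x))-\min(X_0:X_1)\vert_{\sigma(x)}$ is bounded; on the ray $x>0$ it has derivative $s(x)+\tfrac12\geq0$ and on $x<0$ derivative $s(x)-\tfrac12\leq0$, hence is monotone near each end, and a bounded monotone function converges, producing the two required finite limits. Conversely, any $G$ satisfying the three bulleted conditions has $G-\Phi_0$ continuous with finite limits at $\pm\infty$, hence bounded, so it lies in $\mathcal P$ and satisfies $TG=G$; uniqueness in $\mathcal P$ identifies it with the fixed point. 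I expect the only genuinely technical step to be the asymptotic bookkeeping of the second paragraph—converting the coefficient normalization into the boundedness of $T\Phi_0-\Phi_0$ and into the continuous extension of $f$ to $[-\infty,+\infty]$—while everything else is a routine verification of the contraction-mapping hypotheses.
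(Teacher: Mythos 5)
Your proposal is correct and follows essentially the same route as the paper: both realize $G$ as the unique fixed point of the contraction $\tfrac{1}{n}F^{*}$ on a complete space of degree-$1$ homogeneous functions in $\cC$ with controlled behavior at infinity, invoking \autoref{prop:preserving_cones_1d} for cone invariance and the normalization of \autoref{sssec:assumption_on_degree_and_coefficients} for the asymptotics. The differences are only bookkeeping: the paper completes the space of functions that are exactly affine at infinity and preserves that behavior through a three-case analysis, while you work with functions at bounded distance from $\min(X_0,X_1)$ and recover the two boundary limits a posteriori from concavity, boundedness, and the slope bounds -- both variants rest on the same underlying computation.
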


\begin{proof}
	Let $\cP$ denote the space of functions satisfying the homogeneity condition, belonging to the cone $\cC$, and such that for any $G\in \cP$ there exist constants $K\geq 0, c_0,c_1\in \bR$ such that
	\[
		G(X_0:X_1) = \begin{cases}
			X_1 + c'' &\text{ if }X_1-X_0\leq K\\
			X_0 + c' &\text{ if }X_0-X_1\leq K\\
		\end{cases}
	\]
	On $\cP$, define the distance
	\[
		\dist(G_1,G_2) := \sup_{t} |G_1(-t/2:t/2) - G_2(-t/2:t/2)|
	\]
	The completion $\ov{\cP}$ of $\cP$ for the above distance is equal to the space of functions satisfying the first two conditions in the proposition, since the functions are concave and the requirement of belonging to $\cC$ bounds the slopes.

	Define now the rescaling $R:\ov{\cP}\to \ov{\cP}$ by
	\[
		R(G) := \frac{1}{n} F^*G
	\]
	Provided we check that $R$ is well-defined on $\cP$ and is a strict contraction, the unique fixed point of $R$ in $\ov{\cP}$ will be the desired function $G$.

	The strict contraction is immediately checked using the homogeneity of $G$ plus the cocycle relation from \autoref{sssec:section_and_cocycle}:
	\begin{align*}
		\dist(F^*G_1,F^*G_2) & = \sup_t |G_1(F(\sigma(t))) - G_2(F(\sigma(t)))|\\
		 & = \sup_t |G_1(\sigma(f(t)) + c(t)(1:1)) - G_2(\sigma(f(t)) + c(t)(1:1)) |\\
		 & = \sup_t |G_1(\sigma(f(t))) - G_2(\sigma(f(t)))|\\
		 &\leq \dist(G_1,G_2)
	\end{align*}
	With the added factor of $\frac{1}{n}$, the strict contraction follows.	

	The homogeneity condition on $\cM$ is preserved by $R$, using the homogeneity of $F$:
	\[
		\frac{1}{n}G(F(X_0+t:X_1+t)) = \frac{1}{n}G(F(X_0:X_1)+n\cdot(t:t)) = \frac{1}{n}G(F(X_0:X_1)) + t.
	\]
	Since $F$ preserves the cone $\cC$ (\autoref{prop:preserving_cones_1d}) the only remaining property to check is the behavior for $|X_1-X_0|\gg 0$.
	We check it for $X_1-X_0\ll 0$, the argument for the other situation is analogous.

	Suppose therefore that $X_1-X_0\leq K'$, for a constant $K'$ depending only on $F$, such that in that regime we have:
	\[
		F(X_0:X_1) = (a_0 X_0 + (n-a_0)X_1 + c_0: a_1 X_0 + (n-a_1)X_1 + c_1)
	\]
	By the non-degeneracy assumption in \autoref{sssec:assumption_on_degree_and_coefficients}, at least one, and perhaps both $a_i$ vanish.
	We now have three possibilities:
	\begin{itemize}
		\item $a_0=a_1=0$ so $F(X_0:X_1)=(nX_1+c_0:nX_1 + c_1)$ and then
		\[
			\frac{1}{n}G(F(X_0:X_1)) = X_1 + \frac{1}{n}G(c_0:c_1)
		\]
		and the condition is satisfied.
		\item $a_0=0,a_1> 0$, so $F(X_0:X_1)=(nX_1 + c_0: a_1X_0 + (n-a_1)X_1 + c_1)$.
		This can be rewritten as
		\[
			F(X_0:X_1) = n(X_1:X_1) + (c_0:a_1(X_0-X_1) + c_1)
		\]
		and since we were assuming that $X_0\gg X_1$ we have
		\[
			\frac{1}{n}G(F(X_0:X_1)) = X_1 + \frac{1}{n}G(c_0:a_1(X_0-X_1)+c_1) = X_1 + \frac{1}{n}c_0 + c'
		\]
		where we used that for $X_0\ll X_1$ we have $G(X_0:X_1) = X_0 + c'$ for some $c'$.
		\item $a_0>0, a_1 = 0$, so reasoning as in the previous case:
		\[
			F(X_0:X_1) = n(X_1:X_1) + (a_0(X_0-X_1) + c_0:c_1)
		\]
		and now
		\[
			\frac{1}{n} G(F(X_0:X_1)) = X_1 + \frac{1}{n}G(a_0(X_0-X_1)+c_0:c_1) = X_1 + \frac{1}{n}c_1 + c''
		\]
		where we used that for $X_0\gg X_1$ we have $G(X_0:X_1) = X_1 + c''$ for some $c''$.
	\end{itemize}
\end{proof}

\begin{remark}
	The functions $G$ constructed in \autoref{prop:existence_of_a_potential} above is typically in the smaller space $\cP$ defined during the proof, i.e. it is affine linear at infinity.
	The only case when this cannot be guaranteed is when in the cases considered during the proof $a_0=0, a_1=1$ or vice-versa.
	When $a_0=0,a_1=0$ or $a_0=0,a_1>1$ there exists a constant $K>0$ such that the rescaling $R$ will preserve the functions which agree, up to a constant, with $\min(X_0,X_1)$ for $|X_0-X_1|\geq K$.
\end{remark}

\subsubsection{Pulled back potential}
	\label{sssec:pulled_back_potential}
Consider now the pull-back of the potential from $\bR^2$ to $\bR$ via the section $\sigma$ defined in \autoref{sssec:section_and_cocycle}:
\[
	g(x): = G(\sigma(x))
\]
Since $G$ is concave and $\sigma$ is linear, it follows that $g$ is also concave.
Under the dynamics, the behavior of $g(x)$ can be computed using the cocycle relation from \autoref{sssec:section_and_cocycle}:
\begin{align*}
	g(f(x)) & = G(\sigma(f(x))) = G(F(\sigma(x)) - c(x)\cdot (1:1))\\
	&= nG(\sigma(x)) - c(x) = n\cdot g(x) - c(x)
\end{align*}
So this gives the basic property of $g$:
\begin{align}
	\label{eqn:g_functional_equation}
	g(f(x)) = n\cdot g(x) - c(x)
\end{align}
which by homogeneity of $G$ is equivalent to $G(F(X)) = n\cdot G(X)$.

\subsection{Measures from potentials}
	\label{ssec:measures_from_potentials_1d}

Recall the basic identity \autoref{eqn:g_functional_equation}
\[
	g(f(x)) = n\cdot g(x) - c(x)
\]
Recall that $g$ and $c$ are concave functions, so their second derivatives give a measure.
Moreover the measure associated to $c(x)$ is atomic.

\subsubsection{Derivatives of a concave function}
	\label{sssec:derivatives_of_a_concave_function}
Recall that if $g$ is a concave function on an interval in $\bR$, then at any point $x$ it has left and right derivatives:
\[
	g(x) = g(x_0) + 
	\begin{cases}
		g'_l(x_0)(x-x_0) + o(x-x_0) &\text{ if }x\leq x_0\\
		g'_r(x_0)(x-x_0) + o(x-x_0) &\text{ if }x\geq x_0		
	\end{cases}
\]
and moreover the two derivatives agree, except at countably many points (see \cite[Cor. 1.1.6, Thm. 1.1.7]{Hormander_Convexity}.

\begin{definition}[Measure associated to a potential]
	\label{def:measure_associated_to_a_potential}
	Suppose that $g:I\to \bR$ is a concave function on an open interval $I\subset \bR$.
	Define the positive measure $\mu_g$ by the distributional identity:
	\[
		\mu_g(\phi) := -\int_I g(x) \phi''(x) dx
	\]
	for any smooth test function $\phi$, compactly supported in $I$.

	The definition will be used even if $g$ is not concave, although the result can be a general distribution.
\end{definition}

\begin{proposition}[Affine change of variables for measures]
	\label{prop:affine_change_of_variables_for_measures}
	Suppose that $g$ is a concave function on an interval $(y_1,y_2)$ and let $f:(x_1,x_2) \toisom (y_1,y_2)$ be an affine function of the form $f(x)=ax+b$.

	Then the measure associated to the pulled-back function $f^*g$ satisfies:
	\[
		\mu_{(f^* g)} = |a|\cdot f^{-1}_*(\mu_g)
	\]
	i.e. it is proportional to the measure $\mu_g$ pushed forward under $f^{-1}$.
\end{proposition}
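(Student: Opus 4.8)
The plan is to carry out the argument entirely at the level of the distributional definition in \autoref{def:measure_associated_to_a_potential}, so that no regularity of $g$ beyond concavity is needed and the statement reduces to a change of variables combined with the chain rule. For a test function $\phi$ compactly supported in $(x_1,x_2)$, I would start from
\[
	\mu_{(f^*g)}(\phi) = -\int_{x_1}^{x_2} g(f(x))\,\phi''(x)\,dx,
\]
and aim to rewrite the right-hand side in the form $|a|\cdot\mu_g(\psi)$ for an appropriate test function $\psi$ on $(y_1,y_2)$, since that expression is exactly $|a|\cdot f^{-1}_*(\mu_g)(\phi)$ by the definition of pushforward.

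First I would introduce $\psi:=\phi\circ f^{-1}$, which is smooth and compactly supported in $(y_1,y_2)$ precisely because $f$ is an affine isomorphism of the two open intervals. By the chain rule applied to the affine map $f^{-1}$, one has $\psi''(y)=(f^{-1})'(y)^2\,\phi''(f^{-1}(y))=a^{-2}\,\phi''(f^{-1}(y))$, so that $\phi''(f^{-1}(y))=a^{2}\,\psi''(y)$; this is the step that contributes the factor $a^2$. Next I would substitute $y=f(x)=ax+b$ in the defining integral: because $f$ maps $(x_1,x_2)$ bijectively onto $(y_1,y_2)$, the integral over $x$ converts to an integral over $y$ with Jacobian $dx=dy/|a|$, where the absolute value is what results once the orientation reversal in the case $a<0$ is handled correctly. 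Combining these two facts, and using $g(f(x))=g(y)$, gives
\[
	\mu_{(f^*g)}(\phi) = -\int_{y_1}^{y_2} g(y)\,\phi''(f^{-1}(y))\,\frac{dy}{|a|} = -\frac{a^2}{|a|}\int_{y_1}^{y_2} g(y)\,\psi''(y)\,dy = |a|\,\Big(-\int_{y_1}^{y_2} g(y)\,\psi''(y)\,dy\Big).
\]
The bracketed quantity is $\mu_g(\psi)$ by \autoref{def:measure_associated_to_a_potential}, and $\mu_g(\psi)=\mu_g(\phi\circ f^{-1})=f^{-1}_*(\mu_g)(\phi)$ by the definition of the pushforward of a measure, so I would conclude $\mu_{(f^*g)}(\phi)=|a|\,f^{-1}_*(\mu_g)(\phi)$ for every admissible $\phi$, which is the claimed equality of measures.

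The only genuinely delicate point is the bookkeeping in the change of variables: one must verify that the Jacobian contributes $1/|a|$ rather than $1/a$ once the orientation is tracked in the $a<0$ case, so that it combines with the $a^2$ coming from the second-order chain rule to leave exactly one power of $|a|$. Everything else is routine, and since the whole computation is performed by testing against smooth compactly supported $\phi$, it applies verbatim when $g$ is only a distribution rather than a concave function, as already anticipated in the remark attached to \autoref{def:measure_associated_to_a_potential}.
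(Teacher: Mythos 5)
Your proof is correct and follows essentially the same route as the paper: test against $\phi$, pull the test function back under $f^{-1}$, use the chain rule to extract the $a^{2}$ from the second derivative, and change variables with Jacobian $1/|a|$ to leave a single factor of $|a|$. The only difference is cosmetic — you track $|a|$ uniformly to handle both orientations at once, whereas the paper assumes $a>0$ and declares the orientation-reversing case similar — and your bookkeeping of the minus signs from the definition of $\mu_g$ is, if anything, more careful than the paper's own.
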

\begin{proof}
	For the proof, assume that $x_1<x_2$, $y_1<y_2$, and so $a>0$, the orientation-reversing case being similar.
	For a compactly supported test function $\phi$ in $(x_1,x_2)$, its pull-back by $f^{-1}$ to $(y_1,y_2)$ satisfies
	\[
		(f^{-1})^*\phi(y) = \phi\left(\frac{y-b}{a}\right) \qquad \frac{d^2}{dy^2}\left( (f^{-1})^*\phi(y) \right) = \frac{1}{a^2}\phi''\left(\frac{y-b}{a}\right)
	\]
	so we can now compute:
	\begin{align*}
		\mu_{f^*g}(\phi)& = \int_{x_1}^{x_2} g(ax+b) \phi''(x) dx && y=ax+b\\
		& = \int_{y_1}^{y_2} g(y) \phi''\left(\frac{y-b}{a}\right)\frac{dy}{a}\\
		& = \int_{y_1}^{y_2} g(y) \left[ \frac{d^2}{dy^2}\left( (f^{-1})^*\phi(y)\right) \right] a\cdot dy\\
		& = a\cdot \mu_g((f^{-1})^*\phi)\\
		& = a\cdot f^{-1}_*(\mu_g)(\phi)
	\end{align*}
	which is the desired conclusion.
\end{proof}

The next proposition deals with the case when the function $f$ changes slope.
Since the statements are invariant under translations on the source or target, and under adding a constant to the functions under discussion, the result is stated assuming a simpler local form.

\begin{proposition}[Piecewise affine change of variables]
	\label{prop:piecewise_affine_change_of_variables}
	Suppose that $f:[-\ve,\ve]\to [-1,1] $ is given by
	\[
		f(x) = \begin{cases}
			f'_l(0) \cdot x, &\text{ if }x\leq 0\\
			f'_r(0) \cdot x, & \text{ if } x\geq 0
		\end{cases}
	\]
	and $g$ is a concave function on $[-1,1]$, with left and right derivatives at $x=0$ denoted $g'_l(0),g'_r(0)$ (see \autoref{sssec:derivatives_of_a_concave_function}).
	Then the distribution $\mu_{f^*g}$ is still a measure, satisfying
	\begin{align}
		\label{eqn:pullback_piecewise_affine}
		\mu_{f^*g} = f'_l(0) \cdot  f^{-1}_*\mu_{g|_{(-1,0)}} + f'_r(0) \cdot f^{-1}_*\mu_{g|_{(0,1)}} + c_{f,g}\cdot  \delta_{0}
	\end{align}
	where $\delta_0$ is the Dirac-delta mass at the origin, and $c_{f,g}$ is a constant given by
	\[
		c_{f,g} := 
		\begin{cases}
			f'_l(0)\cdot g'_l(0) - f'_r(0)\cdot g'_r(0) &\text{ if } f'_l(0), f'_r(0)\geq 0 \\
			\big( f'_l(0)-f'_r(0) \big)\cdot g'_l(0) & \text{ if } f'_l(0)\geq 0, f'_r(0)\leq 0\\
			\big( f'_l(0)-f'_r(0) \big)\cdot g'_r(0) & \text{ if } f'_l(0)\leq 0, f'_r(0)\geq 0 \\
			f'_l(0)\cdot g'_r(0) - f'_r(0)\cdot g'_l(0) &\text{ if } f'_l(0), f'_r(0)\leq 0 \\
		\end{cases}
	\]
	The cases depend on whether the mapping $f$ is bijective near $0$, or if it folds a neighborhood of zero onto one of the sides.
\end{proposition}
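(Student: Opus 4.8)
The plan is to compute directly from \autoref{def:measure_associated_to_a_potential}. Write $h:=f^{*}g$ and abbreviate $a:=f_l'(0)$, $b:=f_r'(0)$, so that $h(x)=g(ax)$ for $x\le 0$ and $h(x)=g(bx)$ for $x\ge 0$. Two structural facts drive everything: first, $h$ is \emph{continuous} across $0$ (since $f(0)=0$ and $f,g$ are continuous), and second, being the composite of the concave $g$ with an affine map on each side, $h$ is concave on each of $(-\ve,0)$ and $(0,\ve)$ separately, so its one-sided derivatives at $0$ exist by \autoref{sssec:derivatives_of_a_concave_function}. I would then pair $\mu_{f^{*}g}$ with a test function $\phi$ compactly supported in $(-\ve,\ve)$, split $-\int_{-\ve}^{\ve}h\,\phi''\,dx$ at the origin, and integrate by parts twice on each half; because $\phi$ vanishes to second order at $\pm\ve$, the only boundary contributions live at $x=0$.

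Tracking those boundary terms is the heart of the argument. On $(-\ve,0)$ the two integrations by parts contribute $-h(0)\phi'(0)$ and $h_l'(0)\phi(0)$, while on $(0,\ve)$ they contribute $+h(0)\phi'(0)$ and $-h_r'(0)\phi(0)$. The two $\phi'(0)$ terms cancel precisely because $h$ is continuous at $0$; this cancellation is exactly what rules out a $\delta_0'$ component and shows that $\mu_{f^{*}g}$ is a genuine (signed) measure rather than a higher-order distribution. What remains of the boundary data is the single atom $\bigl(h_l'(0)-h_r'(0)\bigr)\delta_0$, so I would set $c_{f,g}=h_l'(0)-h_r'(0)$.

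The four displayed formulas for $c_{f,g}$ then follow by applying the chain rule to $h=g\circ f$ at $0$ and keeping track, for each sign of the slope, of which one-sided derivative of $g$ is activated: as $x\to 0^{-}$ the argument $ax$ tends to $0^{-}$ when $a\ge 0$ (so $h_l'(0)=a\,g_l'(0)$) and to $0^{+}$ when $a\le 0$ (so $h_l'(0)=a\,g_r'(0)$), and symmetrically $h_r'(0)=b\,g_r'(0)$ when $b\ge 0$ and $b\,g_l'(0)$ when $b\le 0$. Substituting these into $c_{f,g}=h_l'(0)-h_r'(0)$ reproduces each of the four cases verbatim. For the absolutely continuous part of $\mu_{f^{*}g}$ on $(-\ve,0)\cup(0,\ve)$, I would invoke \autoref{prop:affine_change_of_variables_for_measures} on each open half, where $f$ is honestly affine; this produces the push-forwards of $\mu_g$ restricted to the relevant image interval, weighted by $|a|$ and $|b|$, which assemble into the first two terms of \autoref{eqn:pullback_piecewise_affine}.

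I expect the main obstacle to be the orientation and domain bookkeeping in the two non-injective (``folding'') regimes $a\ge 0\ge b$ and $a\le 0\ge b$, where both affine branches of $f$ carry their half-neighborhood of $0$ into the \emph{same} side of the target. There one must verify that the correct one-sided derivative of $g$ is selected in $c_{f,g}$ and that the affine push-forwards coming from \autoref{prop:affine_change_of_variables_for_measures} are attributed to the intended interval and sign, so that the signed coefficients $f_l'(0),f_r'(0)$ and the fixed restriction intervals $(-1,0),(0,1)$ appearing in \autoref{eqn:pullback_piecewise_affine} are consistent across all four sign regimes. The two bijective cases $a,b\ge 0$ and $a,b\le 0$ are comparatively immediate once the atom $c_{f,g}$ is identified.
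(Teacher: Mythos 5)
Your proposal is correct, but it proves the proposition by a genuinely different route than the paper. The paper argues by approximation: it first checks \autoref{eqn:pullback_piecewise_affine} directly when $g$ is piecewise linear near the origin, then approximates a general concave $g$ by such functions $g_i$ with $\norm{g-g_i}_{C^0}\to 0$ and $\norm{\mu_g-\mu_{g_i}}\to 0$ in total variation, and passes to the limit on both sides, using that $C^0$ convergence of potentials forces distributional convergence of $\mu_{f^*g_i}$ to $\mu_{f^*g}$. You instead compute $\mu_{f^*g}(\phi)$ directly from \autoref{def:measure_associated_to_a_potential}: splitting at the origin and integrating by parts twice on each half, the continuity of $h:=f^*g$ at $0$ cancels the two $\phi'(0)$ boundary terms (which is exactly what rules out a $\delta_0'$ component), the jump $h'_l(0)-h'_r(0)$ of one-sided derivatives produces the atom, the one-sided chain rule reproduces all four cases of $c_{f,g}$, and \autoref{prop:affine_change_of_variables_for_measures} supplies the part away from the origin. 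In effect, your computation is what the paper's ``straightforward check'' amounts to in its base case, but you carry it out for arbitrary concave $g$ in one stroke, so no approximation argument is needed; the price is that your second integration by parts should be justified by noting that $h'$ is monotone, hence of bounded variation, on each half, so that $\int h'\phi'\,dx = [h'\phi]-\int \phi\,dh'$ holds in the Lebesgue--Stieltjes sense, with finite boundary values since $g'_l(0),g'_r(0)$ are finite ($0$ being interior to $[-1,1]$). Two small items remain to make your argument airtight: the degenerate case where one slope of $f$ vanishes (there \autoref{prop:affine_change_of_variables_for_measures} does not literally apply, but that half contributes the zero measure, consistent with the zero coefficient), and the folding-case attribution of the branch push-forwards that you flag at the end --- this is routine bookkeeping once the atom, which is the real content, has been identified as the derivative jump.
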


Note that while $\mu_g$ is invariant under the addition of a linear function to $g$, the measure associated to the pullback $\mu_{f^*g}$ \emph{is} changed when adding a linear function to $g$.

\begin{proof}
	The result is a straightforward check when $g$ is piecewise linear of the form:
	\[
		g(x) = \begin{cases}
			g'_l(0)\cdot x &\text{ if }x\leq 0\\
			g'_r(0)\cdot x &\text{ if }x\geq 0
		\end{cases}
	\]
	The result also follows when $g$ agrees with such a function near the origin.

	A general concave function $g$ can be approximated by a sequence $g_i$ for which the claim is already established, with the properties:
	\[
		\norm{g-g_i}_{C^0} \leq \ve_i, \quad \norm{\mu_{g} - \mu_{g_i}} \leq \ve_i, \quad \ve_i\to 0
	\]
	where the norm on measures is that of total variation.
	Indeed, $\mu_g$ decomposes into $\mu_{g,1} + \mu_g(\left\lbrace 0 \right\rbrace)\delta_0$ and $\mu_{g,1}$ can be approximated in total variation by measures which are not supported near $0$.

	Because $g_i\xrightarrow{C^0}g$ the same is true for the pullbacks $f^*g_i \xrightarrow{C^0} f^*g$ and then the same is true in the sense of distributions $\mu_{f^*g_i}\to \mu_{f^*g}$.
	Because $\norm{\mu_g-\mu_{g_i}}\to 0$, the right-hand sides in \autoref{eqn:pullback_piecewise_affine} for $g_i$ will also converge to those for $g$, giving the result.
\end{proof}

\subsection{Properties of the measure}
	\label{ssec:properties_of_the_measure}

Recall that:
\begin{align*}
	f(x) & = \underbrace{\min_j (b^j_{1}x + c_1^j)}_{f_1} - \underbrace{\min_j (b_{0}^j x + c_0^j)}_{f_0} && \text{see }\autoref{sssec:dehomogenization_f_1d} \\
	c(x) & = \frac{1}{2}\big( \min_j (b_1^j x + c_1^j) + \min_j (b_0^jx + c_0^j) \big) - \frac{n\cdot x}{2} && \text{see }\autoref{sssec:section_and_cocycle}\\
	 & = \frac{1}{2}(f_1 + f_0) - \frac{n\cdot x}{2}
\end{align*}
Assume that all the linear functionals above do occur in the minimization, and that there are points $x_1^{1} \leq \cdots \leq x_1^{k}$ such that
\begin{align}
	\label{eqn:break_points_of_f}
	f_1(x):=\min_{j=0\ldots k}(b_1^j\cdot x + c_1^j) = b_1^i\cdot x + c_1^i \quad \text{ if }x\in [x_1^{i},x_1^{i+1}]
\end{align}
where by convention $x_{1}^0:=-\infty, x^{k+1}_1:=+\infty$.
The slopes on the intervals come in decreasing order:
\[
	b_1^0 > b_1^1 > \cdots > b_1^k \quad b_1^\bullet \in \left\lbrace n,n-1,\ldots,0 \right\rbrace
\]

Similarly, let $x_0^{1}\leq \cdots \leq x_0^l$ be such that
\[
	f_0(x):=\min_{j=0\ldots l}(b_0^j\cdot x + c_0^j) = b_0^i\cdot x + c_0^i \quad \text{ if }x\in [x_0^{i},x_0^{i+1}]
\]
with the same conventions for the boundary points.
The slopes on the intervals again come in decreasing order:
\[
	b_0^0 > b_0^1 > \cdots > b_0^k \quad b_0^\bullet \in \left\lbrace n,n-1,\ldots,0 \right\rbrace
\]
The \emph{break points} $x_1^i, x_0^j$ are interspersed on the real axis, cut it up into intervals, and the slope of $f$ depends on the interval.

\begin{proposition}
	\label{prop:mu_c_has_atomic_mass_only_breakpoints}
	\leavevmode
	\begin{enumerate}
		\item The measure associated to $c$ via \autoref{def:measure_associated_to_a_potential} has mass only at the break points, where it is atomic:
	\[
		\mu_c\left(\left\lbrace x_\alpha^i \right\rbrace\right) = \frac{1}{2}(b_\alpha^{i-1}-b_\alpha^i) >0 \quad \text{ where }\alpha=0,1
	\]
	\item The total mass of $\mu_g$ is $1$.
	\end{enumerate}	
\end{proposition}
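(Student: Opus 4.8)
The plan is to treat the two assertions separately, each reducing to the elementary distributional calculus of concave piecewise-linear functions.

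For part (1), I would first note that $c$ is itself concave and piecewise linear. From the displayed formula $c(x) = \tfrac{1}{2}(f_1 + f_0) - \tfrac{n x}{2}$ the function $c$ is a positive combination of the two concave PL functions $f_0, f_1$ plus an affine term. The operator $\mu_{(-)}$ of \autoref{def:measure_associated_to_a_potential} is linear and annihilates affine functions, so the term $-\tfrac{n x}{2}$ drops out and
\[
	\mu_c = \tfrac{1}{2}\bigl(\mu_{f_0} + \mu_{f_1}\bigr).
\]
Each $f_\alpha$ is a minimum of finitely many affine functions whose slopes $b_\alpha^0 > b_\alpha^1 > \cdots$ strictly decrease across the break points $x_\alpha^i$; hence $f_\alpha$ is affine between consecutive break points, so its distributional second derivative is supported exactly on those points. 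At $x_\alpha^i$ the slope drops from $b_\alpha^{i-1}$ to $b_\alpha^i$, giving $\mu_{f_\alpha} = \sum_i (b_\alpha^{i-1}-b_\alpha^i)\,\delta_{x_\alpha^i}$, with each coefficient strictly positive since the slopes strictly decrease. Because the break points of $f_0$ and $f_1$ are interspersed (hence distinct), no overlap occurs, and the claimed identity $\mu_c(\{x_\alpha^i\}) = \tfrac{1}{2}(b_\alpha^{i-1}-b_\alpha^i) > 0$ follows, with no mass away from the break points.

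For part (2), I would invoke the standard fact that for a concave $g$ on $\bR$ the total mass of the positive measure $\mu_g = -g''$ equals the difference of the asymptotic slopes,
\[
	\mu_g(\bR) = \lim_{x\to -\infty} g'(x) - \lim_{x\to +\infty} g'(x),
\]
these limits existing because $g'$ is monotone decreasing. It then remains to compute the two slopes of $g(x) = G(\sigma(x)) = G(-x/2 : x/2)$ from the asymptotics of $G$ recorded in \autoref{prop:existence_of_a_potential}, where $G(X_0:X_1) - \min(X_0,X_1)$ tends to finite constants $c'$, $c''$ as $X_1 - X_0 \to +\infty$ and $X_0 - X_1 \to +\infty$ respectively. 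As $x\to +\infty$ we have $X_1 - X_0 = x \to +\infty$ and $\min(-x/2,x/2) = -x/2$, so $g(x) = -x/2 + c' + o(1)$ and $g'(+\infty) = -\tfrac12$; as $x\to -\infty$ we have $X_0 - X_1 \to +\infty$ and $\min(-x/2,x/2) = x/2$, so $g(x) = x/2 + c'' + o(1)$ and $g'(-\infty) = +\tfrac12$. Substituting gives total mass $\tfrac12 - (-\tfrac12) = 1$.

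Both computations are elementary; the only step requiring genuine care is in part (2), namely correctly translating the fiber-direction asymptotics of the homogeneous potential $G$ (phrased in terms of $X_1 - X_0 \to \pm\infty$ and $\min(X_0:X_1)$) into the one-sided slopes of its restriction $g$ along the section $\sigma(x) = (-x/2:x/2)$. Once the identity $\min(-x/2,x/2) = -|x|/2$ is used and the signs of the two limiting slopes are pinned down, the normalization of the total mass to $1$ is immediate.
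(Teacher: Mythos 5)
Your proposal is correct and follows essentially the same route as the paper: part (1) is read off from the decomposition $c = \tfrac{1}{2}(f_1+f_0) - \tfrac{nx}{2}$, and part (2) uses concavity of $g$ together with the asymptotic slopes $\mp\tfrac12$ at $\pm\infty$ coming from the behavior of $G$ in \autoref{prop:existence_of_a_potential} restricted along $\sigma$. The paper's proof is just a terser statement of exactly these two observations, so your write-up simply supplies the details it leaves implicit.
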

\begin{proof}
	Part (i) follows from the formula $c = \frac{1}{2}(f_1+f_0) -\frac{nx}{2}$.

	Part (ii) follows since $g$ is concave, and its slope at $\pm\infty$ is $\mp \frac 12$, using the definition of $\sigma$ and the behavior at infinity of $G$ from \autoref{prop:existence_of_a_potential}.
\end{proof}

\begin{proposition}
	\label{prop:characterization_of_monotonicity_1d}
	If there is a non-trivial interval on which $f$ has slope $\pm n$, i.e. maximal possible, then $f$ is monotonic (not necessarily strictly monotonic).
\end{proposition}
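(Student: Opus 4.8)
The plan is to reduce everything to the monotonicity of the slopes of the two concave pieces of $f$. Recall from \autoref{sssec:dehomogenization_f_1d} and \autoref{ssec:properties_of_the_measure} that $f = f_1 - f_0$, where $f_1 = \min_j(b_1^j x + c_1^j)$ and $f_0 = \min_j(b_0^j x + c_0^j)$ are both concave, being minima of affine functions, and all the slopes $b_i^j$ lie in $\{0, 1, \ldots, n\}$. Write $s_1(x)$ and $s_0(x)$ for the slopes of $f_1$ and $f_0$ on the open intervals between break points. Concavity makes $s_1$ and $s_0$ non-increasing step functions with values in $\{0, \ldots, n\}$, and the slope of $f$ on each piece is $s_1(x) - s_0(x) \in [-n, n]$; in particular $\pm n$ are the extreme values, justifying the phrase ``maximal possible.''

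First I would handle the case of a non-trivial interval $[p,q]$ (with $p < q$) on which $f$ has slope $+n$. There the identity $s_1 - s_0 = n$ together with the bounds $s_1 \le n$ and $s_0 \ge 0$ forces $s_1 \equiv n$ and $s_0 \equiv 0$ on $[p,q]$, these being the largest and smallest attainable values. I then propagate this using monotonicity of $s_1, s_0$: for $x \le p$, since $s_1$ is non-increasing and already attains its maximum $n$ at $p$, it must equal $n$ for all such $x$, so the slope of $f$ there is $n - s_0(x) \ge 0$ because $s_0 \le n$; for $x \ge q$, since $s_0$ is non-increasing and already attains its minimum $0$ at $q$, it must vanish for all such $x$, so the slope of $f$ there is $s_1(x) \ge 0$ because $s_1 \ge 0$. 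Combined with slope $+n$ on $[p,q]$ itself, this shows the slope of $f$ is everywhere nonnegative, so $f$ is non-decreasing.

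The case of a non-trivial interval with slope $-n$ is symmetric: there $s_1 \equiv 0$ and $s_0 \equiv n$, and pushing $s_0 = n$ to the left and $s_1 = 0$ to the right by the same monotonicity shows the slope of $f$ is everywhere nonpositive, so $f$ is non-increasing. Either way $f$ is monotonic. There is no serious obstacle here; the only point demanding care --- rather than difficulty --- is the bookkeeping at the corners, since the slope is defined only on the open pieces. I would handle this by working with the left and right derivatives as in \autoref{sssec:derivatives_of_a_concave_function}, or simply by observing that a continuous piecewise-affine function whose slope has a constant sign on every open piece is monotonic.
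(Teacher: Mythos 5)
Your proof is correct and takes essentially the same approach as the paper: both arguments use $f = f_1 - f_0$ with the extremal slope forcing $s_1 \equiv n$, $s_0 \equiv 0$ on the interval, then exploit the decreasing-slope property of the concave pieces to propagate these values (equivalently, the paper's observation that all break points of $f_0$ lie to the left of the first break point of $f_1$) and conclude the slope of $f$ has constant sign. Your explicit treatment of the corner points and of the $-n$ case matches what the paper leaves implicit as "analogous."
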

\begin{proof}
	Without loss of generality suppose that on some interval $f$ achieves slope $n$, the case of slope $-n$ being analogous, but with $f_1$ and $f_0$ interchanged.
	Since $f = f_1 - f_0$, the slopes of $f_{1/0}$ come in decreasing order, the maximal slope of $f_1$ is $n$ and the minimal slope of $f_0$ is $0$, it follows that all the break points of $f_0$ (if any at all) are to the left of the first break point of $f_1$, i.e. $x_0^l \leq x_1^1$.
	Moreover, the slope of $f_1$ in $(-\infty,x_1^1]$ is $n$ and the slope of $f_0$ in $[x_0^l,+\infty)$ is $0$.
	It follows that $f$ is non-strictly increasing, since the slopes of $f$ will always be non-negative.
\end{proof}

\begin{proposition}[No atoms except at the break points]
	\label{prop:no_atoms_except_at_the_break_points}
	Suppose that $f$ does not achieve slope $\pm n$ on any interval, for example $f$ is not monotonic.

	Then the only possible atoms of the measure $\mu_g$ associated to the potential $g$ from \autoref{sssec:pulled_back_potential} are at the points $x_\bullet^i$ defined in \autoref{eqn:break_points_of_f}.
\end{proposition}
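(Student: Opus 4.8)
The plan is to differentiate the functional equation \autoref{eqn:g_functional_equation} in the distributional sense and then exploit the strict contraction built into the hypothesis on the slopes. Applying the operator $g\mapsto \mu_g$ of \autoref{def:measure_associated_to_a_potential}, which is linear in $g$, to the identity $g\circ f = n\cdot g - c$ yields the measure identity
\[
	\mu_{g\circ f} = n\,\mu_g - \mu_c .
\]
By \autoref{prop:mu_c_has_atomic_mass_only_breakpoints}(i) the measure $\mu_c$ is atomic and supported precisely on the break points $x_\bullet^i$; write $A$ for this finite set. Thus on the open set $\bR\setminus A$ the equation simply reads $\mu_{g\circ f} = n\,\mu_g$.

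Next I would localize the left-hand side at a point $p\notin A$. Since $A$ contains all break points of $f$, the map $f$ is affine near $p$ with some slope $a_p$, and by hypothesis $f$ never achieves slope $\pm n$, so $|a_p|\le n-1$. Applying the affine change of variables \autoref{prop:affine_change_of_variables_for_measures} (equivalently \autoref{prop:piecewise_affine_change_of_variables} with no corner at $p$) identifies the atom of $\mu_{g\circ f}$ at $p$ with $|a_p|$ times the atom of $\mu_g$ at $f(p)$. Writing $m_q := \mu_g(\{q\})$, the measure identity gives the key relation
\[
	n\, m_p = |a_p|\, m_{f(p)}, \qquad\text{i.e.}\qquad m_p = \tfrac{|a_p|}{n}\, m_{f(p)} \le \tfrac{n-1}{n}\, m_{f(p)} ;
\]
if $a_p=0$ then $g\circ f$ is locally constant at $p$ and $m_p=0$ outright. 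The crucial feature is that the factor $|a_p|/n$ is strictly less than $1$, which is exactly what fails in the monotonic case where slope $\pm n$ is attained (\autoref{prop:characterization_of_monotonicity_1d}).

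The heart of the argument is then a forward-orbit estimate. Iterating the relation along $p, f(p), f^2(p),\dots$, so long as $f^i(p)\notin A$ and the intermediate slopes are nonzero, the chain rule gives $m_p = \tfrac{|(f^k)'(p)|}{n^k}\, m_{f^k(p)} \le \big(\tfrac{n-1}{n}\big)^k m_{f^k(p)}$. Since the total mass of $\mu_g$ equals $1$ by \autoref{prop:mu_c_has_atomic_mass_only_breakpoints}(ii), every $m_{f^k(p)}$ is bounded by $1$, so letting $k\to\infty$ forces $m_p=0$. Hence any atom of $\mu_g$ at a point whose forward orbit avoids $A$ must vanish, and the only remaining candidates for atoms outside $A$ are the points whose orbit eventually lands in the finite set $A$, namely those in $\bigcup_{k\ge 0} f^{-k}(A)$.

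The remaining, and I expect most delicate, point is to rule out atoms at the genuine preimages $f^{-k}(A)\setminus A$, i.e. to show that atomic mass cannot leak backward off the break points. Here I would run the same relation in its contracting (backward) direction: $m_p\le \tfrac{n-1}{n}\,m_{f(p)}$ shows that along any backward chain the masses decay geometrically, while the conservation $\sum_q m_q = 1$ together with the finiteness of $A$ controls how much atomic mass can sit on $A$ itself and be distributed among its preimages. Combining this geometric decay with the global mass bound is the crux of the argument, and it is precisely where the strict inequality $|a_p|\le n-1<n$ — rather than merely $|a_p|\le n$ — is indispensable.
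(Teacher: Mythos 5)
Your first three paragraphs are correct, and they run on the same engine as the paper's own proof: differentiating \autoref{eqn:g_functional_equation} to get $\mu_{g\circ f}=n\,\mu_g-\mu_c$, invoking \autoref{prop:mu_c_has_atomic_mass_only_breakpoints} to kill $\mu_c$ off the break set $A$, and localizing via \autoref{prop:affine_change_of_variables_for_measures} to get $n\,m_p=|a_p|\,m_{f(p)}$ with $|a_p|\le n-1$ at every $p\notin A$, where $m_q:=\mu_g(\{q\})$. The paper closes from here with a maximal-atom argument (take the atom of largest mass among \emph{all} atoms, assume it lies off $A$, contradict maximality), whereas your forward-orbit iteration is a variant that legitimately proves: $m_p=0$ for every $p$ whose forward orbit never meets $A$, i.e.\ all atoms of $\mu_g$ lie in $\bigcup_{k\ge 0}f^{-k}(A)$.

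The gap is exactly the one you flag in your last paragraph, and it is not merely delicate --- it cannot be closed, because your key relation is an \emph{equality}, not an estimate. If a break point $\beta\in A$ carries an atom and $p\notin A$ is a preimage of $\beta$ at which $f$ has nonzero slope, then $m_p=\frac{|a_p|}{n}\,m_\beta>0$: the atom propagates backward onto $p$, and geometric decay together with $\mu_g(\bR)=1$ is perfectly consistent with this (masses along a backward chain form a convergent geometric series). Indeed the proposition fails as stated. Take $n=2$, $f_1=\min(2x,\,x,\,4)$, $f_0=\min(2x,\,x+1,\,3)$; then $f=f_1-f_0$ equals $0,\,-x,\,-1,\,x-3,\,1$ on $(-\infty,0],\,[0,1],\,[1,2],\,[2,4],\,[4,\infty)$ respectively, the break points are $\{0,1,2,4\}$, all slopes of $f$ lie in $\{0,\pm 1\}$, and $f$ is not monotonic. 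Since $f^{k}\equiv 0$ for $k\ge 3$ and $c(0)=0$ (with $c=\tfrac{1}{2}(f_0+f_1-2x)$ as in \autoref{sssec:section_and_cocycle}), the series $g=\sum_{k\ge 0}2^{-(k+1)}\,c\circ f^{k}$ is a finite sum, and one computes $g=x/2$ on $(-\infty,0]$, $g=0$ on $[0,3]$, $g=(3-x)/4$ on $[3,4]$, $g=7/4-x/2$ on $[4,\infty)$. This $g$ is concave, has the required behavior at infinity, and satisfies $g\circ f=2g-c$, so by the uniqueness in \autoref{prop:existence_of_a_potential} it \emph{is} the potential; yet $\mu_g=\tfrac{1}{2}\delta_0+\tfrac{1}{4}\delta_3+\tfrac{1}{4}\delta_4$ has an atom at $x=3$, which is not a break point --- it is the slope-one preimage of the break point $0$, which carries the atom of mass $\tfrac{1}{2}$. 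The same configuration defeats the paper's maximal-atom argument, since the largest atom sits \emph{at} a break point. So the correct conclusion, and the one your forward-orbit argument actually establishes, is that atoms of $\mu_g$ are confined to the grand backward orbit $\bigcup_{k\ge 0}f^{-k}(A)$ of the break points, not to $A$ itself; no refinement of either your closing step or the paper's can do better.
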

\begin{proof}
	Pick a point $x_0$, which is not a break point, and such that its mass $\mu_g(\left\lbrace x_0 \right\rbrace)>0$ is largest among all atoms of $\mu_g$ and in the neighborhood of $x_0$, $f$ has slope $b$ with $|b|<n$.

	Recall the functional equation (\autoref{eqn:g_functional_equation})
	\[
		g(f(x)) = n\cdot g(x) - c(x)
	\]
	which translates into the equality of measures (by applying $-\partial_x^2$ to both sides):
	\[
		\mu_{f^*g} = n\cdot \mu_g - \mu_c.
	\]
	By \autoref{prop:mu_c_has_atomic_mass_only_breakpoints} $\mu_c$ is a sum of delta-masses at the break points, while by the change of variables formula from \autoref{prop:affine_change_of_variables_for_measures} we have
	\[
		\mu_{f^*g}(\left\lbrace x_0 \right\rbrace) = |b|\cdot \mu_g(\left\lbrace f(x_0) \right\rbrace)
	\]
	From the assumption that $|b|<n$ and that $\mu_g(\left\lbrace x_0 \right\rbrace)$ is maximal among atoms of $\mu_g$, it follows that $x_0$ cannot be an atom of $\mu_g$.
\end{proof}

\subsection{The tent map}
	\label{ssec:the_tent_map}

The tent map on the interval $[-1/2,1/2]$ can be expressed as
\[
	f(x) = \min \left( 2x+\frac{1}{2}, -2x + \frac{1}2 \right) = -2|x|+\frac{1}{2}
\]
It fixes $-1/2$, expands by a factor of $2$ the segment $[-1/2,0]$ to $[-1/2,1/2]$, then turns around and expands the segment $[0,1/2]$ to $[1/2,-1/2]$ the other way.

The homogenization of the tent map can be done in the following steps, taking $x:=X_1-X_0$ as the inhomogeneous variable:
\begin{align*}
	F(X_0:X_1) & \approx (0 : \min( 2(X_1-X_0) + 1/2, -2(X_1-X_0) + 1/2) ) \quad \text{ add }2(X_0+X_1) \\
	& \approx \left( 2X_0 + 2X_1 : \min \left( 4X_0 + \frac{1}{2}, 4 X_1 + \frac{1}{2} \right) \right)
\end{align*}
The value of $F$ depends on two possibilities:
\begin{itemize}
	\item If $X_1 \leq X_0$ (which corresponds to $x\leq 0$ in the inhomogeneous setting) then
	\[
		F(X_0:X_1) = (2X_0 + 2X_1 : 4X_1+1/2)
	\]
	\item If $X_1 \geq X_0$ (which corresponds to $x\geq 0$ in the inhomogeneous setting) then
	\[
		F(X_0:X_1) = (2X_0 + 2X_1: 4X_0 + 1/2)
	\]
\end{itemize}

\subsubsection{Finding the potential function}
	\label{sssec:finding_the_potential_function}

We would like to find a function $G$ satisfying
\begin{align}
	\label{eqn:G_functional_equation}
	G(F(X_\bullet)) = 4 G(X_\bullet)
\end{align}



There are two simple cases:
\begin{enumerate}
	\item Suppose that $X_0 \geq X_1 + \frac{1}{2}$, which is equivalent to $2X_0 + 2X_1 \geq (4X_1 + \frac{1}{2}) + \frac{1}{2} $, so that the condition is stable under applying $F$.
	Then one can check that 
	\[
		F^n(X_0:X_1) = (E_n: 4^nX_1 + \frac{1}{2}(4^0 + \cdots + 4^{n-1}))
	\]
	and therefore 
	\[
		G(X_0:X_1):= X_1 + \frac{1}{6}	
	\]
	will satisfy the recurrence in \autoref{eqn:G_functional_equation}.
	\item Suppose that $X_0 \leq X_1 - \frac{1}{2}$, which implies that $2X_0 + 2X_1 \geq (4X_0 + \frac{1}{2}) + \frac{1}{2} $, so that after one iterate we land in the previous situation.
	Then
	\begin{align*}
		G(X_0:X_1)&:= \frac{1}{4}G(2X_0+2X_1:4X_0+1/2) \\
		& = \frac{1}{4}(4X_0+1/2 + 1/6)\\
		& = X_0 + \frac{1}{6}
	\end{align*}
\end{enumerate}

\begin{figure}[ht]
	\centering
	\includegraphics[width=0.8\linewidth]{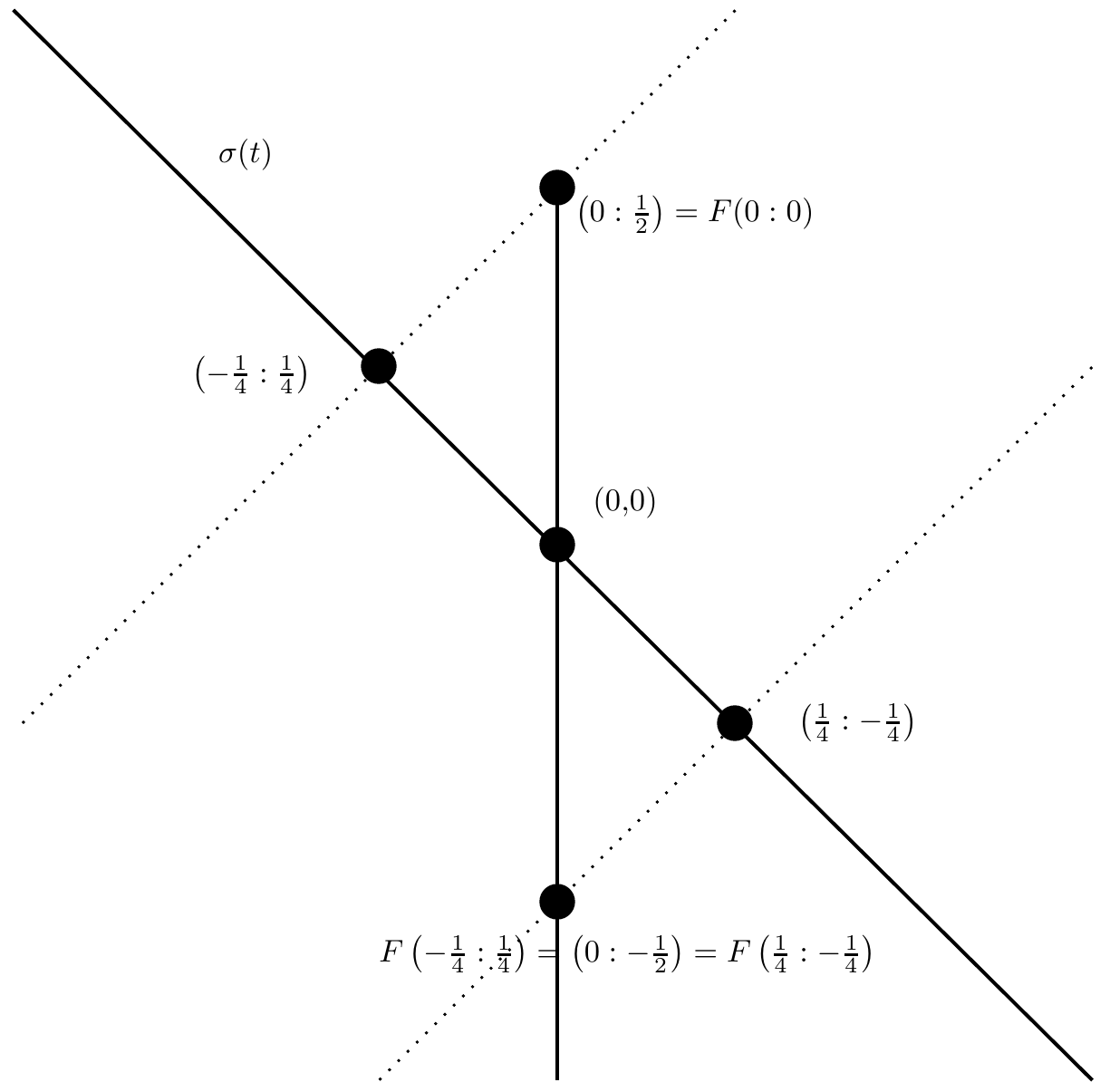}
	\caption{The lifted tent map, and its action on the section $\sigma$.}
	\label{fig:pictures_lifted_tent_map}
\end{figure}

\subsubsection{Computing $G$ in the strip}
	\label{sssec:computing_}
Consider now the map $\sigma(x) = (-x/2:x/2)$ viewed as a section from the base $\bR$ to $\bR^2$, under the projection map $(X_0:X_1)\to X_1-X_0$.
Computing $G$ on the image of $\sigma$ determines it completely, due to the homogeneity conditions $G$ satisfies.
For the dynamics, we have:
\begin{align*}
	F(-x/2:x/2) =
	\begin{cases}
		(0:+2x + 1/2), &\text{ if } x\leq 0\\
		(0:-2x + 1/2), &\text{ if } x\geq 0		
	\end{cases}
\end{align*}
The cohomological condition (see \autoref{sssec:section_and_cocycle}) is then:
\begin{align}
	\label{eqn:tent_map_cohomology}
	F(\sigma(x)) - \sigma(f(x)) = (-|x|+1/4)(1:1)
\end{align}
i.e. $c(x) = -|x| + \frac{1}{4}$.

\begin{proposition}
	\label{prop:computing_g}
	The function $G$ restricted to the line $(-x/2:x/2)$ is equal to:
	\[
	G(-x/2:x/2)=
	\begin{cases}
		-\frac{1}{2}|x| + \frac{1}{6} &\text{ if } |x|\geq \frac{1}{2}\\
		-\frac{1}{2}x^2 + \frac{1}{24} & \text{ if } |x|\leq \frac{1}{2}
	\end{cases}
	\]
\end{proposition}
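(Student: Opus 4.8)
The plan is to pin down $G$ on the section $\sigma(x)=(-x/2:x/2)$ by exploiting the uniqueness in \autoref{prop:existence_of_a_potential}, rather than by unrolling the recursion (which is awkward here because the tent map folds $[-\tfrac12,\tfrac12]$ onto itself, so the functional equation relates interior values only to interior values). Writing $g(x):=G(\sigma(x))$ as in \autoref{sssec:pulled_back_potential}, the two ``simple cases'' computed in \autoref{sssec:finding_the_potential_function} already give the claimed formula on the tentacles: substituting $X_0=-x/2,\ X_1=x/2$ into $G=X_1+\tfrac16$ (valid for $X_0\ge X_1+\tfrac12$, i.e.\ $x\le -\tfrac12$) and into $G=X_0+\tfrac16$ (valid for $x\ge \tfrac12$) yields $g(x)=-\tfrac12|x|+\tfrac16$ in both cases. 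So only the central region $|x|\le \tfrac12$ remains.

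On that region I would use the cocycle functional equation \autoref{eqn:g_functional_equation}, $g(f(x))=4\,g(x)-c(x)$, with $n=4$, $f(x)=-2|x|+\tfrac12$, and $c(x)=-|x|+\tfrac14$ read off from \autoref{eqn:tent_map_cohomology}; equivalently $g(f(x))=4g(x)+|x|-\tfrac14$. Since $f$ maps $[-\tfrac12,\tfrac12]$ into itself, I try the even quadratic ansatz $g(x)=\alpha x^2+\beta$ on this interval and match coefficients. Using $|x|^2=x^2$ one finds
\[
	g(f(x))=\alpha\bigl(4x^2-2|x|+\tfrac14\bigr)+\beta,\qquad 4g(x)+|x|-\tfrac14=4\alpha x^2+4\beta+|x|-\tfrac14 ,
\]
so the $x^2$-terms agree automatically, the $|x|$-terms force $-2\alpha=1$, i.e.\ $\alpha=-\tfrac12$, and the constants force $\tfrac{\alpha}{4}+\beta=4\beta-\tfrac14$, i.e.\ $\beta=\tfrac1{24}$. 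This produces exactly the claimed interior expression $-\tfrac12 x^2+\tfrac1{24}$.

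To conclude I would assemble the two pieces into a single function on $\bR$ and verify it is the potential of \autoref{prop:existence_of_a_potential}. Continuity (indeed $C^1$-matching) at $|x|=\tfrac12$ is the check $-\tfrac12(\tfrac12)^2+\tfrac1{24}=-\tfrac12\cdot\tfrac12+\tfrac16=-\tfrac1{12}$ together with equality of the one-sided slopes ($-x\to-\tfrac12$ from inside, $-\tfrac12$ from outside). I would then confirm the functional equation also holds at the transition points $x=0,\pm\tfrac12$, and extend $g$ to $G$ on $\bR^2$ by degree-one homogeneity, $G(X_0:X_1)=g(X_1-X_0)+\tfrac12(X_0+X_1)$. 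The resulting $G$ is concave with homogeneous slopes $\partial_{X_0}G,\partial_{X_1}G\in[0,1]$, hence lies in the cone $\cC$ of \autoref{def:adapted_cone}, and it has the prescribed finite limits at infinity. Since it satisfies $G\circ F=4G$, the uniqueness statement in \autoref{prop:existence_of_a_potential} identifies it with the potential, proving the proposition. The only genuinely delicate point is the logical one: the quadratic is merely \emph{a} local solution of the folding functional equation, so the argument must route through global uniqueness (cone membership plus behavior at infinity) rather than through any direct pull-back from the boundary data.
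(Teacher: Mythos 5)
Your proposal is correct and takes essentially the same route as the paper: both arguments reduce, via homogeneity and the already-verified tentacle cases, to checking that the claimed formula satisfies the cocycle functional equation $g(f(x))=4g(x)-c(x)$, and then invoke the uniqueness in \autoref{prop:existence_of_a_potential} (you derive the quadratic by coefficient matching, the paper verifies it by direct substitution, which is the same computation run in the other direction). Your explicit verification of concavity, cone membership, and the limits at infinity spells out hypotheses of the uniqueness statement that the paper leaves implicit, but this is a difference of detail, not of method.
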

\begin{proof}
	By uniqueness of the function $G$, it suffices to check that it satisfies the functional equation in \autoref{eqn:G_functional_equation}.
	By homogeneity and the condition at $|x|\gg 0$ already being satisfied, it suffices in fact to check that $g(x):=G(\sigma(x))$ satisfies the functional equation from \autoref{eqn:g_functional_equation}.
	Recall that $c(x)= -|x| + \frac{1}{4}$ so we have
	\begin{align*}
		g(f(x)) & = -\frac{1}{2}(-2|x|+\frac{1}{2})^2 + \frac{1}{24}\\
		& = -2 x^2 + |x| - \frac{1}{12}
		\intertext{ while }
		4g(x) - c(x) & = 4(-\frac{1}{2}x^2 + \frac{1}{24}) - (-|x| + \frac{1}{4})\\
		& = -2 x^2 + |x| - \frac{1}{12}
	\end{align*}
	which is the desired identity.
\end{proof}

\begin{corollary}
	\label{cor:maximal_entropy_for_tent_map}
	The measure $\mu_g$ for the tent map is Lebesgue measure on $[-\frac{1}{2},\frac{1}{2}]$ and hence the measure of maximal entropy.
	Indeed, the tent map restricted to this interval is semi-conjugated to the Bernoulli shift on $\left\lbrace 0,1 \right\rbrace^{\bN}$, and Lebesgue measure corresponds to the uniform measure $\left( \frac{1}{2}\delta_0 + \frac{1}{2}\delta_1 \right)^{\bN}$.
\end{corollary}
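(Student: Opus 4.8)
The plan is to read off $\mu_g$ directly from the closed form for $g$ supplied by \autoref{prop:computing_g}, and then to identify the resulting Lebesgue measure with the measure of maximal entropy by the standard symbolic coding of the full tent map. By \autoref{def:measure_associated_to_a_potential} the measure $\mu_g$ is simply $-g''$ in the distributional sense, where $g(x)=G(-x/2:x/2)$, so the first step is a piecewise computation of the second derivative. On the inner strip $|x|<\tfrac{1}{2}$ we have $g(x)=-\tfrac{1}{2}x^2+\tfrac{1}{24}$, hence $g''=-1$ and $-g''=1$; thus $\mu_g$ has constant density $1$ there, i.e. it agrees with Lebesgue measure on $(-\tfrac12,\tfrac12)$. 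On the outer rays $|x|>\tfrac12$ the function $g(x)=-\tfrac{1}{2}|x|+\tfrac16$ is affine, so $g''=0$ and $\mu_g$ vanishes.

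It then remains to verify that no atom is produced at the two break points $x=\pm\tfrac12$. For a concave $g$ the distribution $-g''$ carries an atom of mass $g'_l(x_0)-g'_r(x_0)$ at each kink (in the spirit of the $\delta_0$-term in \autoref{prop:piecewise_affine_change_of_variables}). At $x=\tfrac12$ the inner branch gives left derivative $g'_l(\tfrac12)=-\tfrac12$ while the outer branch gives right derivative $g'_r(\tfrac12)=-\tfrac12$; these coincide, so $g$ is in fact $C^1$ there and the atom vanishes, and the point $x=-\tfrac12$ is identical by the symmetry $x\mapsto -x$. Combining the three regimes yields $\mu_g=\mathbf{1}_{[-1/2,1/2]}\,dx$, exactly Lebesgue measure; as a consistency check its total mass is $1$, in agreement with \autoref{prop:mu_c_has_atomic_mass_only_breakpoints}(ii).

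For the final assertion I would invoke the classical symbolic dynamics of the full tent map. The map $f(x)=-2|x|+\tfrac12$ fixes $-\tfrac12$ and sends each of $[-\tfrac12,0]$ and $[0,\tfrac12]$ affinely \emph{onto} $[-\tfrac12,\tfrac12]$, so the itinerary (kneading) map furnishes a semiconjugacy $\pi\colon \left\lbrace 0,1 \right\rbrace^{\bN}\to[-\tfrac12,\tfrac12]$ intertwining the one-sided shift with $f$, bijective off a countable set. Under $\pi$ the uniform Bernoulli measure $\big(\tfrac12\delta_0+\tfrac12\delta_1\big)^{\bN}$ pushes forward to Lebesgue measure, and since $\pi$ is essentially injective the pushforward has the same entropy $\log 2=h_{\mathrm{top}}(f)$; by the variational principle this is the unique measure of maximal entropy. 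The genuinely non-routine steps here are the no-atom verification at the break points (everything preceding it is a direct differentiation) and making the coding/entropy identification precise, though the latter is entirely standard and could simply be cited.
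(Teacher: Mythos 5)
Your proposal is correct and matches the paper's (largely implicit) argument: the paper deduces the corollary directly from \autoref{prop:computing_g} by reading off $-g''$ piecewise — density $1$ on $|x|<\tfrac12$, zero on $|x|>\tfrac12$, no atoms at $\pm\tfrac12$ since the one-sided derivatives both equal $\mp\tfrac12$ — and then cites the standard Bernoulli coding, exactly as you do. Your write-up just makes explicit the kink/no-atom verification and the variational-principle step that the paper leaves to the reader.
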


\begin{remark}
	The quadratic function $-\frac{1}{2}x^2 + \frac{1}{24}$ satisfies the condition in \autoref{eqn:g_functional_equation} on all of $\bR$, but it grows too fast at infinity.
\end{remark}

\subsubsection{Some further questions}
	\label{sssec:some_further_questions}

Here are some natural questions that arise from the above discussion.
\begin{enumerate}
	\item Following Favre--Rivera-Letelier \cite{Favre_RiveraLetelier}, one can define the local degree at a point as the slope of $f(x)$.
	Then the non-atomic part of $\mu_g$ should be invariant on the set of points for which at the preimages, the degree adds up to $n$ (see \cite[\S5.2]{Favre_RiveraLetelier}).
	\item More generally, investigate further the atomic and AC parts of the measure constructed measure $\mu$.
	\item There is a natural class of post-critically finite maps in the PL setting.
	These are the maps for which the break points have finite forward orbits.
	To arrange for such behavior, one can start with all the defining data of $f$ in $\bQ$.
	Then the denominators of the critical points will stay bounded, so either the orbits escape to infinity at a definite rate, or are finite.
\end{enumerate}







\bibliographystyle{sfilip}
\bibliography{tropical_dynamics}
\end{document}